\numberwithin{equation}{section} \DeclareMathSizes{2}{10}{12}{13}
\newcommand*{\doublerightarrow}[2]{\mathrel{
  \settowidth{\@tempdima}{$\scriptstyle#1$}
  \settowidth{\@tempdimb}{$\scriptstyle#2$}
  \ifdim\@tempdimb>\@tempdima \@tempdima=\@tempdimb\fi
  \mathop{\vcenter{
    \offinterlineskip\ialign{\hbox to\dimexpr\@tempdima+1em{##}\cr
    \rightarrowfill\cr\noalign{\kern.5ex}
    \rightarrowfill\cr}}}\limits^{\!#1}_{\!#2}}}
\newcommand*{\triplerightarrow}[1]{\mathrel{
  \settowidth{\@tempdima}{$\scriptstyle#1$}
  \mathop{\vcenter{
    \offinterlineskip\ialign{\hbox to\dimexpr\@tempdima+1em{##}\cr
    \rightarrowfill\cr\noalign{\kern.5ex}
    \rightarrowfill\cr\noalign{\kern.5ex}
    \rightarrowfill\cr}}}\limits^{\!#1}}}
\newtheorem{thm}{Proposition}[section]
\newtheorem{Thm}[thm]{Theorem}
\newtheorem{cor}[thm]{Corollary}
\newtheorem{lem}[thm]{Lemma}
\newtheorem{defn}[thm]{Definition}
\title{Noncommutative supports, local cohomology and spectral  sequences}
\author{Abhishek Banerjee\footnote{Department of Mathematics, Indian Institute of Science, Bangalore, India. Email: abhishekbanerjee1313@gmail.com} $\qquad\qquad$ Surjeet Kour\footnote{Department of Mathematics, Indian Institute of Technology, Delhi, India. Email: koursurjeet@gmail.com} }
\date{}
\begin{document}

\maketitle 

\begin{abstract} The purpose of this paper is to study local cohomology in the noncommutative algebraic geometry framework of Artin and Zhang. The noncommutative spaces are obtained by base change of a Grothendieck category that is locally noetherian or strongly locally noetherian.   Using what we call elementary objects and their injective hulls, we develop a theory of supports and associated primes in these categories. We apply our theory to study a general functorial setup that requires certain conditions on the injective hulls of elementary objects and gives us spectral sequences for derived functors associated to local cohomology objects, as well as generalized local cohomology and also generalized Nagata ideal transforms.
\end{abstract}

\smallskip

{\bf MSC(2020) Subject Classification: 13D45, 16E30, 18E40, 18G10}  

\smallskip
{\bf Keywords:} Associated primes, locally noetherian categories, local cohomology, spectral sequences 

\medskip

\section{Introduction}

If $k$ is a field and $A$ is a commutative $k$-algebra, the category $Mod-A$ of $A$-modules determines the geometry of the affine scheme $Spec(A)$. If $A$ is no longer commutative, the category $Mod-A$ of right $A$-modules plays the role of a noncommutative affine scheme. Suppose that $A$ is a commutative graded $k$-algebra and $Gr(A)$ is its category of graded modules. Then,  the geometry of the projective scheme $Proj(A)$  can be understood by means of the category $QGr(A)$ which is the quotient of $Gr(A)$ over torsion modules, i.e., modules which are obtained as filtered colimits of bounded graded modules (see \cite{AZ0}).  Accordingly, the category $QGr(A)$ plays the role of a noncommutative projective scheme in the theory of Artin and Zhang \cite{AZ0}, where $A$ is a noncommutative graded $k$-algebra.  In general, one can approach noncommutative algebraic geometry by adapting notions from commutative rings and the usual theory of schemes to arbitrary Grothendieck categories.

\smallskip
One of the notions that is fundamental in algebraic geometry is that of base change. Let $k$ be a field and let $A$, $R$ be $k$-algebras. Then, a module over the tensor product algebra $(A\otimes_kR)$ may be described as an ordinary $A$-module equipped with an additional $R$-module action. If we replace the category of $A$-modules by a general $k$-linear abelian category $\mathscr S$, we obtain the category $\mathscr S_R$ of ``$R$-module objects in $\mathscr S$.''  These module objects were introduced by Popescu \cite[p108]{Pop} and may be treated as modules over a noncommutative base change of $R$ by the abelian category $\mathscr S$. An $R$-module object in $\mathscr S$ consists of an object $\mathscr M\in \mathscr S$ along with a morphism of $k$-algebras
\begin{equation}
\rho_{\mathscr M}:R\longrightarrow \mathscr S(\mathscr M,\mathscr M)
\end{equation} In \cite{AZ}, Artin and Zhang systematically developed the properties of the abstract module category $\mathscr S_R$, where $\mathscr S$ is a Grothendieck category, and often a locally noetherian or strongly locally noetherian Grothendieck category. The theory developed by Artin and Zhang in \cite{AZ} includes tensor products, internal hom, localization, versions of Hilbert Basis Theorem, Nakayama Lemma, as well as derived functors $Tor$ and $Ext$. Several other fundamental constructions on $\mathscr S_R$ such as ind-objects, coherence and deformation theory, have been developed by Lowen and Van den Bergh \cite{LV}. In \cite{Banj}, we have also studied descent properties in $\mathscr S_R$. 

\smallskip
We note that a  number of properties of the category $\mathscr S$ extend to the module category $\mathscr S_R$, following the idea of noncommutative base change. For example, if $\mathscr S$ is locally finitely generated, so is $\mathscr S_R$. The version of Hilbert Basis Theorem proved by Artin and Zhang in \cite{AZ} shows that if $\mathscr S$ is locally noetherian and $R$ is a finitely generated $k$-algebra, then $\mathscr S_R$ is also locally noetherian. Since Grothendieck categories abound in nature, the theory of the abstract module category $\mathscr S_R$ is very general, and applies to a wide variety of situations. For instance, $\mathscr S$ could be the category of sheaves of abelian groups on a topological space, comodules over a coalgebra over a field, the modules over a ringed space, quasi-coherent sheaves over a scheme, or the category of comodules over a flat Hopf algebroid.

\smallskip In this paper, we develop a local cohomology theory in the noncommutative algebraic geometry of Artin and Zhang \cite{AZ0}, \cite{AZ}, by making use of abstract module categories. We let the $k$-linear Grothendieck category $\mathscr S=\mathscr S_k$ be strongly locally noetherian, whence it follows that $\mathscr S_R$ is locally noetherian for any noetherian commutative $k$-algebra $R$. We should note that our framework will give new results, even  in the case we take $\mathscr S$ to be the category of modules over a noncommutative algebra $A$ that is strongly locally noetherian, i.e., 
$A\otimes_kR$ is right noetherian for every commutative noetherian $k$-algebra $R$ (see \cite{ASZ}). If $C$ is a coalgebra over a field, its category of comodules is locally noetherian (see \cite[$\S$ 2.4.8]{Dasc} and \cite[$\S$ V.4.3]{Sten}). As with the theory in \cite{AZ}, we can  take $\mathscr S$ to be the category $Gr(A)$ of graded modules over a graded algebra $A$. The category $Gr(A)$ is strongly locally notherian whenever $A$ is strongly noetherian in a graded sense  (see \cite[$\S$ B5]{AZ}). We can also take $\mathscr S=QGr(A)$ for a noncommutative graded $k$-algebra $A$. These are the noncommutative projective schemes of Artin and Zhang 
\cite{AZ0}. If $A$ is strongly  noetherian in a graded sense (but not necessarily commutative), the category $QGr(A)$ is strongly locally noetherian and $QGr(A)_R=QGr(A\otimes_kR)$ (see \cite[$\S$ B8]{AZ}).

\smallskip The aims of our paper are threefold. First, we build a theory of associated primes and supports in the abstract module category $\mathscr S_R$ using what we call $R$-elementary objects in $\mathscr S_R$. If $\mathscr S$ was the category of modules over a commutative $k$-algebra $A$, then an $R$-elementary object in $\mathscr S_R$ behaves somewhat like a quotient $A/\mathfrak{q}$, where $\mathfrak q\subseteq A$ is a prime ideal. Accordingly, the injective hulls of these $R$-elementary objects in $\mathscr S_R$ play a key role in our theory.  

\smallskip
Our second purpose is to develop local cohomology in  $\mathscr S_R$. In fact, we work with a   more general framework, extending the local cohomology with respect to a pair of ideals $I$, $J\subseteq R$ introduced  by Takahashi, Yoshino and Yoshizawa \cite{TYY}. One of the key aspects of the theory in \cite{TYY} is that local cohomology is based on a nonclosed support $\mathbb W(I,J)$. In order to study the two variable local cohomology objects $H^\bullet_{I,J}(\_\_)$ in 
$\mathscr S_R$, we will need the support theory, associated primes and $R$-elementary objects in $\mathscr S_R$.  If $\mathscr M\in \mathscr S_R$ is a finitely generated object, we show that the set $Ass(\mathscr M)$ is finite. For an ideal $I\subseteq R$, the question of the finiteness of the set of associated primes of local cohomology objects is very interesting  in commutative algebra, and is also related to Faltings' 
local-global-principle for finiteness dimensions (see \cite{Falt}, and also \cite[$\S$ 9.6.2]{BS}). In fact, a number of questions related to the supports and associated primes of local cohomology have been studied extensively in the literature (see, for instance, \cite{Brd79}, \cite{BHell}, \cite{BLash}, \cite{BRS},   \cite{Hell}, \cite{Hun}, \cite{RS}). We give a condition for $Ass(H^i_{I,J}(\mathscr M)/\mathscr N)$ to be finite, where $\mathscr M\in \mathscr S_R$ is finitely generated and $\mathscr N\subseteq H^i_{I,J}(\mathscr M)$ is a finitely generated subobject. This generalizes a result of Brodmann and Faghani \cite{BLash} to the two variable local cohomology context, as well as to the abstract module category
$\mathscr S_R$. In particular, by taking $\mathscr N=0$, we have a condition for the collection of associated primes of $H^i_{I,J}(\mathscr M)$ to be finite.

\smallskip Finally, using properties of the injective hull of $R$-elementary objects, we apply our theory to describe a general functorial setup that is similar to the one recently introduced by \`Alvarez Montaner, Boix and Zarzuela in \cite{Alz} (see also \cite{Alz2}).  By considering systems of functors on $\mathscr S_R$ satisfying certain conditions with respect to their values on injective hulls of $R$-elementary objects in $\mathscr S_R$, we obtain a general spectral sequence for their derived functors in this context. In particular, this gives us a spectral sequence for two variable local cohomology objects in $\mathscr S_R$. In fact, it is striking that the nonclosed supports $\mathbb W(I,J)$ from \cite{TYY} appear naturally in the theory developed in \cite{Alz}.

\smallskip
We conclude by showing two further applications of the spectral sequence in this  formalism. The first is with generalized local cohomology functors given by derived functors of 
\begin{equation}\label{cr510gitr}
\Gamma_{V_I}:\mathscr S_R \longrightarrow \mathscr S_R \qquad \mathscr M\mapsto \underset{t\geq 1}{\varinjlim}\textrm{ }\underline{Hom}_R(V/I^tV,\mathscr M)
\end{equation} 
where $V$ is a finitely generated $R$-module and $\underline{Hom}(V,\_\_):  \mathscr S_R\longrightarrow \mathscr S_R$ is the internal hom of an $R$-module $V$ with respect to an object of $\mathscr S_R$.  The second is the ``generalized Nagata ideal transform'' (see \cite[$\S$ 4]{Alz} and \cite{DAS}) that we extend to $\mathscr S_R$ as follows
\begin{equation}\label{521gntitr}
\Delta_{V_I}:\mathscr S_R\longrightarrow \mathscr S_R\qquad \mathscr M\mapsto \underset{t\geq 1}{\varinjlim}\textrm{ }\underline{Hom}_R(I^tV,\mathscr M)
\end{equation} where $V$ is again a finitely generated $R$-module. In each case we obtain spectral sequences  for their derived functors.

\section{$R$-elementary objects, associated primes and injectives}

Let $k$ be a field. Throughout, we let  $\mathscr S=\mathscr S_k$ be a strongly locally noetherian $k$-linear category. We let $R$ be a commutative and noetherian $k$-algebra. Accordingly, the category $\mathscr S_R$ of $R$-module objects in $\mathscr S$ is locally noetherian. By \cite[$\S$ B3]{AZ}, there is a ``tensor product'' $\_\_\otimes_R\_\_:
\mathscr S_R\times R-Mod \longrightarrow \mathscr S_R$ that is right exact in both variables. If $V$ is a flat $R$-module, then the functor $\_\_\otimes_RV:\mathscr S_R\longrightarrow
\mathscr S_R$ is exact. Since $k$ is a field, the functor $\mathscr M\otimes_k \_\_:k-Mod\longrightarrow \mathscr S$ is exact for any $\mathscr M\in \mathscr S$. Consequently, 
$(\mathscr M\otimes_k R)\otimes_R \_\_: R-Mod\longrightarrow \mathscr S_R$ is exact for the $k$-algebra $R$ (see \cite[$\S$ C1]{AZ}). We will typically write $\otimes:=\otimes_k$. We also note that an $R$-module object in $\mathscr S$ may be described by giving $\mathscr M\in \mathscr S$ and a ``structure map'' $\mathscr M\otimes R\longrightarrow \mathscr M$ satisfying analogues of the usual associativity
and unit conditions (see \cite[$\S$ B3.14]{AZ}). For notions such as finitely generated objects, noetherian objects and locally noetherian categories that we will need throughout this paper, we refer the reader to \cite{AR}.

\smallskip
We will now introduce some notation. Let $r_{\mathscr M}$ denote the endomorphism induced on $\mathscr M\in \mathscr S_R$ by $r\in R$. We set the annihilator $Ann(\mathscr M)\subseteq R$ to be the ideal
\begin{equation}\label{3ann}
Ann(\mathscr M):=\{\mbox{$r\in R$ $\vert$ $r_{\mathscr M}=0:\mathscr M=\mathscr M\otimes_RR\xrightarrow{\mathscr M\otimes r}\mathscr M\otimes_RR=\mathscr M$}\}
\end{equation} It is clear that if $\mathscr M'\subseteq \mathscr M$, then $Ann(\mathscr M)\subseteq Ann(\mathscr M')$. 
For any $\mathscr M\in \mathscr S_R$ and any element $a\in R$, we form the short exact sequence
\begin{equation}\label{32ses}
0\longrightarrow Ker(a_{\mathscr M})\longrightarrow \mathscr M\longrightarrow a\mathscr M=Im(a_{\mathscr M})\longrightarrow 0
\end{equation}

\smallskip 
 For  any $\mathscr M\in \mathscr S_R$, we denote by $fg(\mathscr M)$ the collection of finitely generated subobjects of $\mathscr M$ in 
$\mathscr S_R$. We also let  $Pr(\mathscr M)$ denote the collection of finitely generated subobjects $0\ne \mathscr N\subseteq \mathscr M$ in $\mathscr S_R$ such that $Ann(\mathscr N)$ is a prime ideal in $R$. 

\begin{defn}\label{D3.1} We will say that an object $0\ne \mathscr L\in \mathscr S_R$ is $R$-elementary if it satisfies the following two conditions

\smallskip
(a) $\mathscr L$ is finitely generated.

\smallskip
(b) There is a prime ideal $\mathfrak p\subseteq R$ such that for any non-zero subobject $\mathscr L'\subseteq \mathscr L$, we have $Ann(\mathscr L')=\mathfrak p$.

\smallskip For $\mathscr M\in \mathscr S_R$, we will say that a prime ideal $\mathfrak p\subseteq R$ is an associated prime of $\mathscr M$ if there exists an $R$-elementary object 
$\mathscr L \subseteq \mathscr M$ such that
$Ann(\mathscr L)=\mathfrak p$. 
We denote by $Ass(\mathscr M)$ the collection of associated primes of $\mathscr M$.

\end{defn}

Our first purpose is to show that associated primes exist for every non-zero $\mathscr M\in \mathscr S_R$. For this, we first prove the following result.

\begin{lem}\label{L3.2} Let $0\ne \mathscr M\in \mathscr S_R$. Then, there exists a finitely generated subobject $0\ne \mathscr N\subseteq \mathscr M$ in $\mathscr S_R$ such that
$Ann(\mathscr N)$ is a prime ideal, i.e., $Pr(\mathscr M)$ is non-empty.
\end{lem}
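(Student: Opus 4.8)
The plan is to reproduce, inside $\mathscr S_R$, the classical argument that a maximal element of the set of annihilators of nonzero subobjects is a prime ideal.

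First I would invoke that $\mathscr S_R$ is locally noetherian: since $\mathscr M\ne 0$, it contains a nonzero finitely generated (in fact noetherian) subobject, so the family
\[
\Sigma:=\{\,Ann(\mathscr N)\ \vert\ \mathscr N\in fg(\mathscr M),\ \mathscr N\ne 0\,\}
\]
of ideals of $R$ is non-empty. As $R$ is noetherian, $\Sigma$ has a maximal element; fix $0\ne\mathscr N_0\in fg(\mathscr M)$ with $\mathfrak p:=Ann(\mathscr N_0)$ maximal in $\Sigma$. Note that $\mathfrak p\ne R$, since $1\in Ann(\mathscr N_0)$ would force the identity endomorphism of $\mathscr N_0$ to vanish and hence $\mathscr N_0=0$. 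I claim $\mathscr N_0$ is the desired subobject, i.e. that $\mathfrak p$ is prime.

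To prove primality, suppose $ab\in\mathfrak p$ with $b\notin\mathfrak p$; I want $a\in\mathfrak p$. Since $b\notin Ann(\mathscr N_0)$, the endomorphism $b_{\mathscr N_0}$ is nonzero, so $b\mathscr N_0:=Im(b_{\mathscr N_0})$ is a nonzero subobject of $\mathscr N_0\subseteq\mathscr M$; being a quotient of $\mathscr N_0$ it is finitely generated, so $Ann(b\mathscr N_0)\in\Sigma$. From $b\mathscr N_0\subseteq\mathscr N_0$ we get $\mathfrak p=Ann(\mathscr N_0)\subseteq Ann(b\mathscr N_0)$, and maximality of $\mathfrak p$ in $\Sigma$ forces $Ann(b\mathscr N_0)=\mathfrak p$. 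It therefore suffices to show $a\in Ann(b\mathscr N_0)$. Since $\rho_{\mathscr N_0}:R\to\mathscr S(\mathscr N_0,\mathscr N_0)$ is a morphism of $k$-algebras and $R$ is commutative, $a_{\mathscr N_0}\circ b_{\mathscr N_0}=(ab)_{\mathscr N_0}=0$. Factoring $b_{\mathscr N_0}$ as an epimorphism $\pi:\mathscr N_0\twoheadrightarrow b\mathscr N_0$ followed by the inclusion $\iota:b\mathscr N_0\hookrightarrow\mathscr N_0$, the vanishing $a_{\mathscr N_0}\circ\iota\circ\pi=0$ and the fact that $\pi$ is an epimorphism give $a_{\mathscr N_0}\circ\iota=0$. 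Because $\iota$ is a morphism in $\mathscr S_R$ we have $\iota\circ a_{b\mathscr N_0}=a_{\mathscr N_0}\circ\iota=0$, and since $\iota$ is a monomorphism this gives $a_{b\mathscr N_0}=0$, i.e. $a\in Ann(b\mathscr N_0)=\mathfrak p$, as wanted. Hence $\mathfrak p$ is prime and $\mathscr N_0\in Pr(\mathscr M)$.

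I expect the only delicate points — and the main, though mild, obstacle — to be the bookkeeping in the abstract setting: that the image $b\mathscr N_0$ of $b_{\mathscr N_0}$ is genuinely an $R$-module object (a subobject of $\mathscr N_0$ in $\mathscr S_R$) on which $a$ acts as the restriction of $a_{\mathscr N_0}$, and that the identities $a_{\mathscr N}\circ b_{\mathscr N}=(ab)_{\mathscr N}$ and $1_{\mathscr N}=\mathrm{id}_{\mathscr N}$ hold; these follow from $\rho$ being a morphism of $k$-algebras together with the description of subobjects and structure maps in $\mathscr S_R$ recalled from \cite[$\S$ B3]{AZ}. All the rest is the standard maximal-annihilator argument transported verbatim.
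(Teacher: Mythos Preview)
Your proof is correct and follows essentially the same approach as the paper: both pick a maximal element $\mathfrak p=Ann(\mathscr N_0)$ in the set of annihilators of nonzero finitely generated subobjects, and then show primality by observing that if $ab\in\mathfrak p$ with (say) $b\notin\mathfrak p$ then $b\mathscr N_0$ is a nonzero finitely generated subobject whose annihilator properly contains~$\mathfrak p$ unless $a\in\mathfrak p$. The only cosmetic differences are that the paper phrases the primality check by contradiction and swaps the roles of $a$ and $b$, and that it uses the epimorphism $\mathscr N_0\twoheadrightarrow a\mathscr N_0$ rather than the monomorphism $b\mathscr N_0\hookrightarrow\mathscr N_0$ to transport the vanishing; your version is equally valid.
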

\begin{proof}
Because $\mathscr S_R$ is locally noetherian, the object $0\ne \mathscr M\in \mathscr S_R$ must have non-zero finitely generated subobjects.  Since $R$ is noetherian, we can choose an ideal $I\subseteq R$ such that $I$ is maximal among annihilators of finitely generated non-zero subobjects of $\mathscr M$. Suppose that $I=Ann(\mathscr N)$ for some $0\ne \mathscr N\in fg(\mathscr M)$. We claim that $I$ is prime. Otherwise, suppose we have $a$, $b\in R$ such that $a\notin I$, $b\notin I$ but $ab\in I$. 

\smallskip
Since $ab\in I=Ann(\mathscr N)$, we have $a_{\mathscr N}\circ b_{\mathscr N}=0$, which implies that $b_{\mathscr N}:\mathscr N\longrightarrow \mathscr N$ factors through the kernel
$Ker(a_{\mathscr N})$. Hence, $\pi_{\mathscr N}^a\circ b_{\mathscr N}=0$, where $\pi_{\mathscr N}^a$ is the canonical epimorphism $\pi_{\mathscr N}^a:\mathscr N\longrightarrow a\mathscr N$. Since $\pi_{\mathscr N}^a$ is a morphism of $R$-module objects, we obtain  $b_{a\mathscr N}\circ \pi_{\mathscr N}^a=\pi_{\mathscr N}^a\circ b_{\mathscr N}=0$. Since $\pi_{\mathscr N}^a$ is an epimorphism, it follows that $b_{a\mathscr N}=0$, i.e., $b\in Ann(a\mathscr N)$. 

\smallskip
Since $\mathscr S_R$ is locally noetherian, we note that $a\mathscr N\subseteq \mathscr N$ is finitely generated. Since $a\notin I=Ann(\mathscr N)$, we know that $a\mathscr N\ne 0$. Clearly, $I=Ann(\mathscr N)\subseteq Ann(a\mathscr N)$. Then, $I\subsetneq (b,I)\subseteq Ann(a\mathscr N)$, which contradicts the maximality of $I$.
\end{proof}

\begin{thm}\label{P3.3}  Let $0\ne \mathscr M\in \mathscr S_R$. Then, $Ass(\mathscr M)\ne \phi$. 

\end{thm}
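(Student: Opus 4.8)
The plan is to produce an $R$-elementary subobject of $\mathscr M$ by iterating Lemma \ref{L3.2}, using the ascending chain condition available since $\mathscr S_R$ is locally noetherian. First, by Lemma \ref{L3.2}, the set $Pr(\mathscr M)$ is nonempty, so we may pick a finitely generated $0\ne \mathscr N_0\subseteq \mathscr M$ with $Ann(\mathscr N_0)=\mathfrak p$ a prime ideal. The object $\mathscr N_0$ need not be $R$-elementary: there might be a nonzero subobject $\mathscr N_1\subseteq \mathscr N_0$ with $Ann(\mathscr N_1)\supsetneq \mathfrak p$. Applying Lemma \ref{L3.2} again to $\mathscr N_1$, we may arrange that $Ann(\mathscr N_1)$ is again prime; and so on. I expect this process must stabilize.

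The key step is therefore to show the iteration terminates. I would argue as follows. Since $\mathscr M$ is a nonzero object of the locally noetherian category $\mathscr S_R$, it contains a nonzero \emph{noetherian} subobject $\mathscr N$ (one may take a nonzero finitely generated subobject, which is noetherian because $\mathscr S_R$ is locally noetherian). Now work inside $\mathscr N$: among all nonzero subobjects $\mathscr N'\subseteq \mathscr N$ whose annihilator is a prime ideal — this collection is nonempty by Lemma \ref{L3.2} applied to $\mathscr N$ — choose $\mathscr L$ with $Ann(\mathscr L)$ \emph{maximal} (possible since $R$ is noetherian). I claim $\mathscr L$ is $R$-elementary with $\mathfrak p:=Ann(\mathscr L)$. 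Indeed $\mathscr L$ is finitely generated (being a subobject of the noetherian object $\mathscr N$). Let $0\ne \mathscr L'\subseteq \mathscr L$; then $\mathfrak p=Ann(\mathscr L)\subseteq Ann(\mathscr L')$. On the other hand, by Lemma \ref{L3.2} applied to $\mathscr L'$, there is a nonzero finitely generated $\mathscr L''\subseteq \mathscr L'\subseteq \mathscr N$ with $Ann(\mathscr L'')$ prime; by the maximality of $Ann(\mathscr L)$ among such primes, $Ann(\mathscr L'')=\mathfrak p$ (since $Ann(\mathscr L)=\mathfrak p\subseteq Ann(\mathscr L')\subseteq Ann(\mathscr L'')$ and $Ann(\mathscr L'')$ is a prime of this collection, maximality forces equality). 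This forces $Ann(\mathscr L')=\mathfrak p$ as well, so condition (b) of Definition \ref{D3.1} holds, and $\mathscr L$ is $R$-elementary. Hence $\mathfrak p\in Ass(\mathscr M)$, proving $Ass(\mathscr M)\ne\phi$.

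The main obstacle to watch is the maximality argument: one needs the collection of prime ideals occurring as $Ann(\mathscr N')$ for nonzero $\mathscr N'\subseteq\mathscr N$ with prime annihilator to actually have a maximal element, and then to verify that the chosen $\mathscr L$ really does have \emph{all} nonzero subobjects annihilated exactly by $\mathfrak p$ — this is where Lemma \ref{L3.2} is used a second time, to pass from an arbitrary nonzero subobject down to one with prime annihilator, to which maximality applies. A clean alternative phrasing, avoiding a direct maximality selection, is to argue by contradiction: if no nonzero subobject of $\mathscr N$ with prime annihilator is $R$-elementary, then starting from $\mathscr N_0$ with $Ann(\mathscr N_0)$ prime we can build a strictly descending chain $\mathscr N_0\supsetneq \mathscr N_1\supsetneq\cdots$ of nonzero finitely generated subobjects with $Ann(\mathscr N_0)\subsetneq Ann(\mathscr N_1)\subsetneq\cdots$ a strictly ascending chain of prime ideals in $R$, contradicting the noetherianity of $R$. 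Either route works; I would present the maximality version as it is shorter. The bookkeeping — ensuring finite generation is preserved at each stage (which holds since $\mathscr S_R$ is locally noetherian, so subobjects of finitely generated objects are finitely generated) — is routine.
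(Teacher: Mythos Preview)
Your proposal is correct, and in fact your ``alternative phrasing'' by contradiction---building a strictly ascending chain $\mathfrak p_0\subsetneq\mathfrak p_1\subsetneq\cdots$ of prime annihilators and invoking noetherianity of $R$---is precisely the paper's proof. Your preferred maximality version is just the direct (non-contradiction) reformulation of the same idea and is equally valid; both routes rest on the same two ingredients, namely Lemma~\ref{L3.2} and the noetherian hypothesis on $R$.
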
 
\begin{proof}
Suppose that $Ass(\mathscr M)=\phi$. By Lemma \ref{L3.2}, we know that $Pr(\mathscr M)\ne \phi$. As such, we may choose $0\ne \mathscr N_0\in fg(\mathscr M)$ such that $\mathfrak p_0:=Ann(\mathscr N_0)$ is a prime ideal.  Since $\mathfrak p_0$ cannot be an associated prime of $\mathscr M$, we can choose $0\ne \mathscr N_0'\subseteq \mathscr N_0$ such that $\mathfrak p_0=Ann(\mathscr N_0)\subsetneq Ann(\mathscr N_0')$. Again by Lemma \ref{L3.2}, we know that $Pr(\mathscr N_0')\ne \phi$. Accordingly, we can choose $0\ne \mathscr N_1\subseteq \mathscr N_0'$ such that $\mathfrak p_1:=Ann(\mathscr N_1)$ is a prime ideal. But then,
\begin{equation}
\mathfrak p_0=Ann(\mathscr N_0)\subsetneq Ann(\mathscr N_0')\subseteq Ann(\mathscr N_1)=\mathfrak p_1
\end{equation} By repeating the argument, we obtain an infinite increasing chain of prime ideals in $R$. This contradicts the fact that $R$ is noetherian.
\end{proof}

\begin{thm}\label{P3.4}
Suppose that $0\longrightarrow \mathscr M'\longrightarrow\mathscr M\longrightarrow \mathscr M''\longrightarrow 0$ is a short exact sequence in $\mathscr S_R$. Then, we have 
$Ass(\mathscr M')\subseteq Ass(\mathscr M)\subseteq Ass(\mathscr M')\cup Ass(\mathscr M'')$. 
\end{thm}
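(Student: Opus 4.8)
The plan is to treat the two inclusions separately: the first is immediate, and the second follows from a short two-case analysis using intersections of subobjects inside $\mathscr M$.

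\textbf{First inclusion.} For $Ass(\mathscr M')\subseteq Ass(\mathscr M)$ I would simply note that if $\mathfrak p\in Ass(\mathscr M')$, witnessed by an $R$-elementary subobject $\mathscr L\subseteq\mathscr M'$ with $Ann(\mathscr L)=\mathfrak p$, then composing with the monomorphism $\mathscr M'\hookrightarrow\mathscr M$ exhibits $\mathscr L$ as an $R$-elementary subobject of $\mathscr M$, so $\mathfrak p\in Ass(\mathscr M)$.

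\textbf{Second inclusion.} Fix $\mathfrak p\in Ass(\mathscr M)$ and choose an $R$-elementary subobject $\mathscr L\subseteq\mathscr M$ with $Ann(\mathscr L)=\mathfrak p$. Form the intersection $\mathscr L\cap\mathscr M'$ inside $\mathscr M$, which equals the kernel of the composite $\mathscr L\hookrightarrow\mathscr M\longrightarrow\mathscr M''$. If $\mathscr L\cap\mathscr M'=0$, then this composite is a monomorphism, so $\mathscr L$ is an $R$-elementary subobject of $\mathscr M''$ and $\mathfrak p\in Ass(\mathscr M'')$. If $\mathscr L\cap\mathscr M'\neq 0$, I claim it is an $R$-elementary subobject of $\mathscr M'$ with annihilator $\mathfrak p$: since $\mathscr S_R$ is locally noetherian, the finitely generated object $\mathscr L$ is noetherian, hence all of its subobjects are noetherian and in particular finitely generated, so $\mathscr L\cap\mathscr M'\in fg(\mathscr M')$; and every non-zero subobject of $\mathscr L\cap\mathscr M'$ is a non-zero subobject of $\mathscr L$, hence has annihilator exactly $\mathfrak p$ by Definition \ref{D3.1}(b). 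Thus $\mathfrak p\in Ass(\mathscr M')$, and in either case $\mathfrak p\in Ass(\mathscr M')\cup Ass(\mathscr M'')$.

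\textbf{Main point to watch.} The only step that is not purely formal is the verification that a non-zero subobject of an $R$-elementary object is again $R$-elementary — here local noetherianity of $\mathscr S_R$ is what guarantees that such a subobject is finitely generated, while condition (b) of Definition \ref{D3.1} is inherited automatically. Everything else is standard manipulation of subobjects, pullbacks and kernels in the Grothendieck category $\mathscr S_R$; the potential obstacle is only bookkeeping, namely correctly identifying $\mathscr L\cap\mathscr M'$ with $\ker(\mathscr L\to\mathscr M'')$ and keeping track of which object each inclusion lands in.
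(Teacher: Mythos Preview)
Your proof is correct and follows essentially the same approach as the paper: both argue the first inclusion directly from the definition and handle the second by taking an $R$-elementary witness $\mathscr L\subseteq\mathscr M$, intersecting with $\mathscr M'$, and splitting into the two cases $\mathscr L\cap\mathscr M'=0$ and $\mathscr L\cap\mathscr M'\neq 0$. You supply a bit more justification than the paper does for why a non-zero subobject of an $R$-elementary object remains $R$-elementary (invoking local noetherianity for finite generation), which is a helpful addition but not a departure in strategy.
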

\begin{proof}
From Definition \ref{D3.1}, it is clear that $Ass(\mathscr M')\subseteq Ass(\mathscr M)$. We now consider $\mathfrak p\in Ass(\mathscr M)$ and an $R$-elementary  object $ \mathscr L\subseteq \mathscr M$ such that
$Ann(\mathscr L)=\mathfrak p$. We set $\mathscr K:=\mathscr M'\cap \mathscr L$. 
Suppose that $\mathscr K=0$. Then, we have
\begin{equation}
\mathscr L=\mathscr L/\mathscr M'\cap \mathscr L\cong (\mathscr L+\mathscr M')/\mathscr M'\subseteq \mathscr M/\mathscr M'=\mathscr M''
\end{equation} Accordingly, $\mathfrak p\in Ass(\mathscr L)\subseteq Ass(\mathscr M'')$. Otherwise, suppose that $\mathscr K\ne 0$.  Then, $0\ne \mathscr K=\mathscr M'\cap \mathscr L\subseteq \mathscr L$ is also $R$-elementary with $\mathfrak p=Ann(\mathscr L)=Ann(\mathscr K)$. 
By Definition \ref{D3.1}, it is clear that $\mathfrak p\in Ass(\mathscr K)\subseteq Ass(\mathscr M')$. 
\end{proof}

\begin{thm}\label{P3.5}
Let $\mathscr M\in \mathscr S_R$ be finitely generated. Then, there exists a finite filtration
\begin{equation}\label{filt35}
0=\mathscr M_0 \subseteq \mathscr M_1 \subseteq \dots \subseteq \mathscr M_p=\mathscr M
\end{equation} such that each successive quotient $\mathscr M_i/\mathscr M_{i-1}$ is $R$-elementary. Additionally, $Ass(\mathscr M)\subseteq \underset{i=1}{\overset{p}{\bigcup}} Ass(\mathscr M_i/\mathscr M_{i-1})$.
\end{thm}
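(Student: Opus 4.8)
The plan is to construct the filtration from the bottom up by a noetherian maximality argument, and then read off the statement about associated primes from Proposition \ref{P3.4} by induction along the filtration. Since $\mathscr S_R$ is locally noetherian and $\mathscr M$ is finitely generated, $\mathscr M$ is a noetherian object of $\mathscr S_R$ (\cite{AR}), so the subobjects of $\mathscr M$ satisfy the ascending chain condition; this is the property I intend to exploit.

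Concretely, I would let $\Sigma$ denote the collection of all subobjects $\mathscr N\subseteq \mathscr M$ admitting a finite filtration $0=\mathscr N_0\subseteq \mathscr N_1\subseteq\dots\subseteq\mathscr N_q=\mathscr N$ with each $\mathscr N_j/\mathscr N_{j-1}$ being $R$-elementary. Then $\Sigma\ne\phi$: the zero subobject lies in $\Sigma$ with $q=0$, and in fact Proposition \ref{P3.3} applied to $\mathscr M$ produces an $R$-elementary subobject $\mathscr L\subseteq\mathscr M$, so $\mathscr L\in\Sigma$ via $0\subseteq\mathscr L$. By the ascending chain condition the nonempty family $\Sigma$ has a maximal element $\mathscr M^{\ast}$. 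I claim $\mathscr M^{\ast}=\mathscr M$. If not, then $\mathscr M/\mathscr M^{\ast}\ne 0$, so by Proposition \ref{P3.3} there is an $R$-elementary subobject $\bar{\mathscr L}\subseteq \mathscr M/\mathscr M^{\ast}$; taking $\mathscr M^{\ast\ast}\subseteq\mathscr M$ to be the preimage of $\bar{\mathscr L}$ under the canonical epimorphism $\pi:\mathscr M\to\mathscr M/\mathscr M^{\ast}$ gives $\mathscr M^{\ast}\subseteq\mathscr M^{\ast\ast}$ with $\mathscr M^{\ast\ast}/\mathscr M^{\ast}\cong\bar{\mathscr L}$ being $R$-elementary. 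Concatenating a filtration of $\mathscr M^{\ast}$ of the required type with the single step $\mathscr M^{\ast}\subseteq\mathscr M^{\ast\ast}$ shows $\mathscr M^{\ast\ast}\in\Sigma$, while $\bar{\mathscr L}\ne 0$ forces $\mathscr M^{\ast}\subsetneq\mathscr M^{\ast\ast}$, contradicting maximality. Hence $\mathscr M^{\ast}=\mathscr M$ and we obtain a filtration of the form \eqref{filt35}.

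For the assertion on associated primes, I would prove by induction on $i$ that $Ass(\mathscr M_i)\subseteq\bigcup_{j=1}^{i}Ass(\mathscr M_j/\mathscr M_{j-1})$. The case $i=1$ is immediate since $\mathscr M_0=0$ gives $\mathscr M_1=\mathscr M_1/\mathscr M_0$. For the inductive step, apply Proposition \ref{P3.4} to the short exact sequence $0\to\mathscr M_{i-1}\to\mathscr M_i\to\mathscr M_i/\mathscr M_{i-1}\to 0$ to get $Ass(\mathscr M_i)\subseteq Ass(\mathscr M_{i-1})\cup Ass(\mathscr M_i/\mathscr M_{i-1})$, and combine with the inductive hypothesis; taking $i=p$ yields the claim.

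The main point to pin down carefully is the passage from ``finitely generated'' to ``noetherian object'', which is what makes $\Sigma$ have a maximal element; beyond that, the only thing requiring (routine) checking is that the preimage $\mathscr M^{\ast\ast}$ of $\bar{\mathscr L}$ along $\pi$ is a subobject of $\mathscr M$ with $\mathscr M^{\ast\ast}/\mathscr M^{\ast}\cong\bar{\mathscr L}$, which is standard in a Grothendieck category. Everything else is bookkeeping with Lemma \ref{L3.2} (through Proposition \ref{P3.3}) and Proposition \ref{P3.4}.
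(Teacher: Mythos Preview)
Your proof is correct and follows essentially the same approach as the paper: both construct the filtration by repeatedly invoking Proposition \ref{P3.3} to find $R$-elementary subobjects of successive quotients, using noetherianity of $\mathscr M$ for termination, and both obtain the containment on associated primes by iterating Proposition \ref{P3.4} along the short exact sequences $0\to\mathscr M_{i-1}\to\mathscr M_i\to\mathscr M_i/\mathscr M_{i-1}\to 0$. The only cosmetic difference is that you package termination as a maximality argument on the family $\Sigma$, whereas the paper phrases it as an iterative construction that must stop.
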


\begin{proof}
Let $\mathscr M\ne 0$. By Proposition \ref{P3.3}, we can choose a subobject $0\ne \mathscr M_1\subseteq \mathscr M$ that is $R$-elementary. If $\mathscr M=\mathscr M_1$, we are done. Otherwise, we continue by applying Proposition \ref{P3.3} to $\mathscr M/\mathscr M_1$, which gives us a filtration as in \eqref{filt35}. The filtration must be finite because $\mathscr S_R$ is a locally noetherian category and $\mathscr M\in \mathscr S_R$ is a finitely generated object. The last assertion follows by applying Proposition \ref{P3.4} repeatedly to the short exact sequences
$0\longrightarrow \mathscr M_{i-1}\longrightarrow \mathscr M_i\longrightarrow \mathscr M_i/\mathscr M_{i-1}\longrightarrow 0$. 
\end{proof}

\begin{cor}\label{C3.6}  Let $\mathscr M\in \mathscr S_R$ be finitely generated.  Then, the set $Ass(\mathscr M)$ is finite.
\end{cor}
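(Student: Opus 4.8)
The plan is to read this off directly from the finite filtration constructed in Proposition \ref{P3.5}. The statement is vacuous for $\mathscr M=0$, so I assume $\mathscr M\ne 0$ and fix a filtration $0=\mathscr M_0\subseteq \mathscr M_1\subseteq \dots\subseteq \mathscr M_p=\mathscr M$ with each successive quotient $\mathscr M_i/\mathscr M_{i-1}$ an $R$-elementary object, together with the inclusion $Ass(\mathscr M)\subseteq \bigcup_{i=1}^p Ass(\mathscr M_i/\mathscr M_{i-1})$ supplied by the same proposition.

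The key observation I would then record is that an $R$-elementary object has exactly one associated prime. Indeed, let $\mathscr L$ be $R$-elementary with prime $\mathfrak p\subseteq R$ as in Definition \ref{D3.1}(b). Since $\mathscr L$ is an $R$-elementary subobject of itself with $Ann(\mathscr L)=\mathfrak p$, we have $\mathfrak p\in Ass(\mathscr L)$, so $Ass(\mathscr L)\ne \phi$. Conversely, if $\mathscr L'\subseteq \mathscr L$ is any $R$-elementary subobject, then in particular $\mathscr L'$ is a non-zero subobject of $\mathscr L$, whence $Ann(\mathscr L')=\mathfrak p$ by Definition \ref{D3.1}(b); thus $Ass(\mathscr L)=\{\mathfrak p\}$. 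Applying this to each $\mathscr M_i/\mathscr M_{i-1}$ produces primes $\mathfrak p_1,\dots,\mathfrak p_p\subseteq R$ with $Ass(\mathscr M_i/\mathscr M_{i-1})=\{\mathfrak p_i\}$, and combining with the inclusion above gives $Ass(\mathscr M)\subseteq \{\mathfrak p_1,\dots,\mathfrak p_p\}$, a finite set.

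There is no real obstacle here: the substantive content lies in Propositions \ref{P3.3}, \ref{P3.4} and \ref{P3.5}, and the only point that needs checking is the singleton claim for $R$-elementary objects, which is immediate from the definition.
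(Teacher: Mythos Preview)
Your proof is correct and follows exactly the approach of the paper, which simply says the result ``is clear from Proposition \ref{P3.5}.'' You have merely spelled out the implicit step that each $R$-elementary quotient $\mathscr M_i/\mathscr M_{i-1}$ has $Ass(\mathscr M_i/\mathscr M_{i-1})=\{\mathfrak p_i\}$, so the union in Proposition \ref{P3.5} is finite.
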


\begin{proof}
This is clear from Proposition \ref{P3.5}. 
\end{proof}

\begin{cor}\label{C3.7} Let $\mathscr M\in \mathscr S_R$ and let $\mathscr N\subseteq \mathscr M$ be an essential subobject. Then, $Ass(\mathscr N)=Ass(\mathscr M)$.
\end{cor}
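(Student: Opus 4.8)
The plan is to prove the two inclusions separately, the first being immediate and the second being an essentiality argument identical in spirit to the nontrivial case of Proposition \ref{P3.4}.

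First I would observe that $Ass(\mathscr N)\subseteq Ass(\mathscr M)$ holds for the trivial reason already noted in the proof of Proposition \ref{P3.4}: any $R$-elementary subobject $\mathscr L\subseteq \mathscr N$ is also a subobject of $\mathscr M$, so every prime realized as $Ann(\mathscr L)$ for such an $\mathscr L$ lies in $Ass(\mathscr M)$. This requires nothing about essentiality.

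For the reverse inclusion, let $\mathfrak p\in Ass(\mathscr M)$ and pick an $R$-elementary subobject $\mathscr L\subseteq \mathscr M$ with $Ann(\mathscr L)=\mathfrak p$. Set $\mathscr K:=\mathscr N\cap \mathscr L$. Since $\mathscr L\ne 0$ and $\mathscr N$ is essential in $\mathscr M$, we have $\mathscr K\ne 0$. Now $0\ne\mathscr K\subseteq\mathscr L$, and since $\mathscr L$ is finitely generated in the locally noetherian category $\mathscr S_R$ it is a noetherian object, so $\mathscr K$ is finitely generated; moreover every non-zero subobject of $\mathscr K$ is a non-zero subobject of $\mathscr L$ and hence has annihilator $\mathfrak p$. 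Thus $\mathscr K$ is $R$-elementary with $Ann(\mathscr K)=\mathfrak p$, and $\mathscr K\subseteq\mathscr N$, so $\mathfrak p\in Ass(\mathscr N)$ by Definition \ref{D3.1}. Combining the two inclusions gives $Ass(\mathscr N)=Ass(\mathscr M)$.

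There is no real obstacle here: the only substantive point is the use of essentiality to guarantee $\mathscr K\ne 0$, and the only background fact invoked is that a subobject of a finitely generated object in the locally noetherian category $\mathscr S_R$ is again finitely generated, which is already used in the preceding results.
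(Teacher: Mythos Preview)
Your proof is correct and follows essentially the same approach as the paper: both use Proposition \ref{P3.4} (or its trivial direction) for $Ass(\mathscr N)\subseteq Ass(\mathscr M)$, and for the reverse inclusion both intersect a witnessing $R$-elementary $\mathscr L$ with $\mathscr N$, use essentiality to get $\mathscr N\cap\mathscr L\ne 0$, and observe that this intersection is again $R$-elementary with annihilator $\mathfrak p$. You spell out slightly more detail (e.g.\ why $\mathscr K$ is finitely generated), but the argument is the same.
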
 

\begin{proof}
It follows from Proposition \ref{P3.4} that $Ass(\mathscr N)\subseteq Ass(\mathscr M)$. On the other hand, let $\mathfrak p\in Ass(\mathscr M)$ and $0\ne \mathscr L\in fg(\mathscr M)$ be an $R$-elementary object such that $Ann(\mathscr L)=\mathfrak p$. Since $\mathscr N\subseteq \mathscr M$ is essential, we know $\mathscr N\cap \mathscr L\ne 0$. It is now clear from Definition \ref{D3.1} that $\mathscr N\cap \mathscr L$ is $R$-elementary and $Ann(\mathscr N\cap \mathscr L)=\mathfrak p$. 
\end{proof}

Since $\mathscr S_R$ is a Grothendieck category, every object $\mathscr M\in \mathscr S_R$ has an injective hull, which we will denote by $\mathscr E(\mathscr M)$. We will now prove the main result of this section, which describes the injectives of $\mathscr S_R$ in terms of injective envelopes of $R$-elementary objects. 

\begin{Thm}\label{T3.8bi}
Let $\mathscr S$ be a strongly locally noetherian Grothendieck category and let $R$ be a commutative noetherian $k$-algebra. Then, every injective object $\mathscr E$ in $\mathscr S_R$ can be expressed as a direct sum of injective hulls of $R$-elementary objects.
\end{Thm}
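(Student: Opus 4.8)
The plan is to imitate Matlis's classical argument for injective modules over a noetherian ring, the point being that $\mathscr S_R$ is locally noetherian. Two ingredients are needed. First, since $\mathscr S_R$ is locally noetherian, an arbitrary direct sum of injective objects of $\mathscr S_R$ is again injective; this is one of the standard characterizations of locally noetherian Grothendieck categories (see \cite{AR}). Second, by Proposition \ref{P3.3} every non-zero object of $\mathscr S_R$ contains an $R$-elementary subobject, and hence, taking the injective hull of that subobject inside an ambient injective object (possible in any Grothendieck category), it contains a copy of the injective hull of an $R$-elementary object.

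Now fix an injective object $\mathscr E\in\mathscr S_R$. I would consider the collection $\mathcal F$ of all families $\{\mathscr E_i\}_{i\in A}$ of subobjects of $\mathscr E$ such that (i) each $\mathscr E_i$ is an injective hull of some $R$-elementary object, and (ii) the sum $\sum_{i\in A}\mathscr E_i$ is direct inside $\mathscr E$, i.e. the canonical morphism $\bigoplus_{i\in A}\mathscr E_i\to\mathscr E$ is a monomorphism. Order $\mathcal F$ by inclusion of families. Since directness of a sum of subobjects is detected on finite subsums, the union of a chain in $\mathcal F$ again lies in $\mathcal F$, so Zorn's Lemma yields a maximal family $\{\mathscr E_i\}_{i\in A}\in\mathcal F$.

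Set $\mathscr E':=\sum_{i\in A}\mathscr E_i\subseteq\mathscr E$. By (ii), $\mathscr E'\cong\bigoplus_{i\in A}\mathscr E_i$ is a direct sum of injectives, hence injective by the first ingredient; therefore $\mathscr E'$ is a direct summand of $\mathscr E$, say $\mathscr E=\mathscr E'\oplus\mathscr E''$. It remains to check that $\mathscr E''=0$. If $\mathscr E''\neq 0$, then $\mathscr E''$ is itself injective, being a direct summand of an injective, and by the second ingredient it contains a subobject $\mathscr E(\mathscr L)$ which is an injective hull of an $R$-elementary object $\mathscr L$. Since $\mathscr E(\mathscr L)\subseteq\mathscr E''$ meets $\mathscr E'$ in $0$, the enlarged family $\{\mathscr E_i\}_{i\in A}\cup\{\mathscr E(\mathscr L)\}$ still lies in $\mathcal F$, contradicting maximality. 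Hence $\mathscr E=\mathscr E'=\bigoplus_{i\in A}\mathscr E_i$, as desired.

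The routine points — that $\mathscr L$ admits an injective hull realized as a subobject of the injective object $\mathscr E''$, and that directness of a sum of subobjects passes to unions of chains — are standard facts about Grothendieck categories. The only genuinely non-formal input is that a direct sum of injectives remains injective, which is exactly where local noetherianity of $\mathscr S_R$ (itself a consequence of $\mathscr S$ being strongly locally noetherian and $R$ noetherian) enters; I expect this to be the crux of the argument, though it is available off the shelf.
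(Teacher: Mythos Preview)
Your proposal is correct and follows essentially the same argument as the paper: both use Zorn's lemma on families of injective hulls of $R$-elementary subobjects whose sum is direct, invoke local noetherianity of $\mathscr S_R$ to ensure the resulting direct sum is injective and hence a summand, and then appeal to Proposition \ref{P3.3} to derive a contradiction if the complementary summand is non-zero. The paper even makes the same remark that directness is detected on finite subfamilies (via finite limits commuting with filtered colimits) to justify the application of Zorn's lemma.
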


\begin{proof}
We choose a  family $\{\mathscr E_i\}_{i\in I}$ of subobjects of $\mathscr E$  satisfying the following two conditions

\smallskip
(1) Each $\mathscr E_i$ is isomorphic to the injective hull of an $R$-elementary object in $\mathscr S_R$

\smallskip
(2) The sum $\sum_{i\in I}\mathscr E_i\subseteq \mathscr E$ is direct, i.e., $(\mathscr E_i)\cap (\sum_{j\ne i}\mathscr E_j)=0$ for each $i\in I$

\smallskip
Since finite limits commute with filtered colimits in $\mathscr S_R$, condition (2) is equivalent to  $0=(\mathscr E_i)\cap (\sum_{j\in J}\mathscr E_j)$ for each $i\in I$ and any finite $J\subseteq I\backslash
\{i\}$. Accordingly, by Zorn's lemma, we may choose a maximal such family $\{\mathscr E_i\}_{i\in I_0}$ inside $\mathscr E$.

\smallskip
We now set $\mathscr E':=\bigoplus_{i\in I_0}\mathscr E_i$. Since $\mathscr S_R$ is locally noetherian, the direct sum $\mathscr E'$ of injectives must be injective. Accordingly, we can split $\mathscr E$ as $\mathscr E=\mathscr E'\oplus \mathscr E''$. If $\mathscr E''=0$, the result is already proved. If $\mathscr E''\ne 0$, it follows from Proposition \ref{P3.3} that there is an 
$R$-elementary object $\mathscr L\subseteq \mathscr E''$. Since $\mathscr E''$ is injective, the injective hull $\mathscr E(\mathscr L)\subseteq \mathscr E''$. This contradicts the maximality of the family
$\{\mathscr E_i\}_{i\in I_0}$. 

\end{proof}

If $T\subseteq R$ is a multiplicatively closed set, the localization $\mathscr M_T$ of an object $\mathscr M\in \mathscr S_R$ with respect to $T$ is given by
$\mathscr M\otimes_R R[T^{-1}]$ (see \cite[$\S$ B6]{AZ}). The localization $\mathscr M_T$ may also be expressed as the filtered colimit of copies of $\mathscr M$ connected by morphisms $t_{\mathscr M}:\mathscr M\longrightarrow \mathscr M$ over all $t\in T$  (see \cite[$\S$ B6.1]{AZ}). Accordingly, the kernel of the canonical map $\mathscr M\longrightarrow \mathscr M_T$ is given by the filtered colimit (see \cite[Proposition B6.2]{AZ})
\begin{equation}\label{kerun}
Ker(\mathscr M\longrightarrow \mathscr M_T)=\underset{t\in T}{\bigcup}\textrm{ }Ker(t_{\mathscr M}:\mathscr M\longrightarrow \mathscr M)
\end{equation}  Moreover, since $R[T^{-1}]$ is a flat $R$-module, it follows from \cite[Proposition C1.7]{AZ} that localization is an exact functor on $\mathscr S_R$.  If $\mathfrak p$ is a prime ideal, we denote by $\mathscr M_{\mathfrak p}$ the localization with respect
to $R\backslash {\mathfrak p}$. We now have the following definition.

\begin{defn}\label{D3.7l} Let $\mathscr M\in \mathscr S_R$. Then, the support $Supp(\mathscr M)$ of $\mathscr M$ consists of all prime ideals $\mathfrak p\subseteq R$ such that $\mathscr M_{\mathfrak p}\ne 0$.
\end{defn}

\begin{thm}\label{L3.8uh}
(a) Let $\mathscr M\in \mathscr S_R$ and let $\mathfrak p\in Supp(\mathscr M)$. Then, $\mathfrak p\supseteq Ann(\mathscr M)$. 

\smallskip
(b) If $\mathscr M\in \mathscr S_R$ is finitely generated, then $Supp(\mathscr M)=\mathbb V(Ann(\mathscr M))$, the collection of prime ideals containing $Ann(\mathscr M)$.

\smallskip
(d) For a short exact sequence $0\longrightarrow \mathscr M'\longrightarrow \mathscr M\longrightarrow \mathscr M''
\longrightarrow 0$ in $\mathscr S_R$, we have $Supp(\mathscr M)=Supp(\mathscr M')\cup Supp(\mathscr M'')$.
\end{thm}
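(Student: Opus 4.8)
The plan is to treat the three parts in order, reducing each to an elementary property of the localization functor $\_\_\otimes_R R_{\mathfrak p}$ recalled just before the statement. Throughout I use that $\mathscr M_{\mathfrak p}=\mathscr M\otimes_R R_{\mathfrak p}$ and that the canonical kernel is given by \eqref{kerun}.

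For (a) I would argue the contrapositive: assume $\mathfrak p\not\supseteq Ann(\mathscr M)$ and pick $a\in Ann(\mathscr M)$ with $a\notin\mathfrak p$; I then claim $\mathscr M_{\mathfrak p}=0$. Since $a_{\mathscr M}=0$, the endomorphism $a_{\mathscr M_{\mathfrak p}}=a_{\mathscr M}\otimes_R\mathrm{id}_{R_{\mathfrak p}}$ of $\mathscr M_{\mathfrak p}$ is zero. On the other hand $a_{\mathscr M_{\mathfrak p}}$ also equals $\mathrm{id}_{\mathscr M}\otimes_R(a\cdot{-})$, where $a\cdot{-}\colon R_{\mathfrak p}\to R_{\mathfrak p}$ is an isomorphism of $R$-modules (inverse multiplication by $1/a$), so $a_{\mathscr M_{\mathfrak p}}$ is an automorphism. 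An object admitting a zero automorphism is $0$, hence $\mathscr M_{\mathfrak p}=0$, contradicting $\mathfrak p\in Supp(\mathscr M)$. This gives $Supp(\mathscr M)\subseteq\mathbb V(Ann(\mathscr M))$ in general.

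For (b) the inclusion $Supp(\mathscr M)\subseteq\mathbb V(Ann(\mathscr M))$ is exactly part (a), so only the reverse inclusion is new. I would take $\mathfrak p\supseteq Ann(\mathscr M)$, assume $\mathscr M_{\mathfrak p}=0$, and seek a contradiction. Then the canonical morphism $\mathscr M\to\mathscr M_{\mathfrak p}$ is zero, so by \eqref{kerun} with $T=R\setminus\mathfrak p$ we get $\mathscr M=\bigcup_{t\notin\mathfrak p}Ker(t_{\mathscr M})$. Since $\mathfrak p$ is prime, this is a directed family of subobjects — $Ker(t_{\mathscr M}),Ker(t'_{\mathscr M})\subseteq Ker((tt')_{\mathscr M})$ and $tt'\notin\mathfrak p$ — so finite generation of $\mathscr M$ forces $\mathscr M=Ker(t_{\mathscr M})$ for some $t\notin\mathfrak p$, i.e.\ $t\in Ann(\mathscr M)\subseteq\mathfrak p$, a contradiction. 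Hence $\mathscr M_{\mathfrak p}\neq 0$, i.e.\ $\mathfrak p\in Supp(\mathscr M)$.

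For (d) I would use that localization at $\mathfrak p$ is exact, since $R_{\mathfrak p}$ is a flat $R$-module (\cite[Proposition C1.7]{AZ}); applied to $0\to\mathscr M'\to\mathscr M\to\mathscr M''\to0$ it yields a short exact sequence $0\to\mathscr M'_{\mathfrak p}\to\mathscr M_{\mathfrak p}\to\mathscr M''_{\mathfrak p}\to0$, so $\mathscr M_{\mathfrak p}\neq0$ if and only if $\mathscr M'_{\mathfrak p}\neq0$ or $\mathscr M''_{\mathfrak p}\neq0$, which is precisely $Supp(\mathscr M)=Supp(\mathscr M')\cup Supp(\mathscr M'')$. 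None of these steps is a serious obstacle; the only delicate point is recognising when a localization vanishes, and I deliberately avoid any fine analysis of the colimit $\varinjlim_t\mathscr M$ — in (a) by exploiting the unit $1/a\in R_{\mathfrak p}$, and in (b) because only the trivial implication ``the zero object receives the zero morphism'' is used. The one spot that genuinely wants an explicit line of verification is the directedness of $\{Ker(t_{\mathscr M})\}_{t\notin\mathfrak p}$, which legitimises the appeal to finite generation in (b).
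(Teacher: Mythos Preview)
Your proof is correct and follows essentially the same approach as the paper: part (a) via the contrapositive using that $a\notin\mathfrak p$ acts both as zero and as an isomorphism on $\mathscr M_{\mathfrak p}$, part (b) via the filtered union \eqref{kerun} combined with finite generation, and part (d) via exactness of localization. Your explicit verification of directedness in (b) is a small elaboration the paper leaves implicit (it just says the colimit is filtered), but otherwise the arguments coincide.
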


\begin{proof}
(a) If $\mathfrak p\not\supseteq Ann(\mathscr M)$, we choose $t\in Ann(\mathscr M)$ such that $t\notin \mathfrak p$. Since $t$ is a unit in $R_{\mathfrak p}$, it follows that $\mathscr M\otimes_Rt=t_{\mathscr M}\otimes_RR_{\mathfrak p}=0:\mathscr M_{\mathfrak p}=\mathscr M\otimes_RR_{\mathfrak p}\longrightarrow \mathscr M\otimes_RR_{\mathfrak p}=\mathscr M_{\mathfrak p}$ is an isomorphism. This gives $\mathscr M_{\mathfrak p}=0$, and hence $\mathfrak p\notin Supp(\mathscr M)$. This proves (a).

\smallskip
Additionally, suppose that $\mathscr M$ is finitely generated and ${\mathfrak p}\in \mathbb  V(Ann(\mathscr M))$, but $\mathscr M_{\mathfrak p}=0$. Since the colimit in \eqref{kerun} is filtered, this means that $Ker(\mathscr M\longrightarrow\mathscr M_{\mathfrak p})=\mathscr M=Ker(t_{\mathscr M})$ for some $t\notin {\mathfrak p}$. Then, $t\in Ann(\mathscr M)$ and $t\notin {\mathfrak p}$, which is a contradiction. This proves (b). 
The result of (c) follows directly from the fact that localization is exact. 

\end{proof}

We conclude this section by establishing some properties of injective hulls in $\mathscr S_R$ that we will need throughout this paper. We begin with the following result.

\begin{lem}\label{L2.11fvd}
Let $\mathscr M\in \mathscr S_R$ and let $T\subseteq R$ be a multiplicatively closed subset. Then, any subobject of  $\mathscr M_T\in \mathscr S_{R[T^{-1}]}$ is of the form $\mathscr N_T$ for some subobject $\mathscr N\subseteq \mathscr M$ in $\mathscr S_R$. 
\end{lem}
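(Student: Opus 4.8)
The plan is to take $\mathscr N$ to be the preimage of the given subobject under the canonical map $\phi\colon\mathscr M\to\mathscr M_T$ and to verify $\mathscr N_T=\mathscr P$ using the filtered-colimit description of localization. Let $\mathscr P\subseteq\mathscr M_T$ be the given subobject; restricting scalars along $R\to R[T^{-1}]$ we may also regard $\mathscr P$ as a subobject of $\mathscr M_T$ in $\mathscr S_R$. Since $\phi$ is a morphism in $\mathscr S_R$ and $\mathscr P\hookrightarrow\mathscr M_T$ is a monomorphism, I would form the pullback $\mathscr N:=\phi^{-1}(\mathscr P)\subseteq\mathscr M$, which is canonically an object of $\mathscr S_R$ (its $R$-module object structure is inherited because $\phi$ is $R$-linear). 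It then remains to prove that $\mathscr N_T=\mathscr P$ as subobjects of $\mathscr M_T$.

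By \cite[$\S$ B6.1]{AZ}, $\mathscr M_T$ is the filtered colimit $\varinjlim_{t\in T}\mathscr M$ of copies of $\mathscr M$ connected by the morphisms $s_{\mathscr M}$, with canonical morphisms $\iota_t\colon\mathscr M\to\mathscr M_T$; these satisfy $\iota_1=\phi$, and since each $\iota_t$ is $R$-linear and $\iota_1=\iota_t\circ t_{\mathscr M}$, also $t_{\mathscr M_T}\circ\iota_t=\iota_1=\phi$. Here $t_{\mathscr M_T}$ is an automorphism of $\mathscr M_T$, because $\mathscr M_T\in\mathscr S_{R[T^{-1}]}$ and $t$ is invertible in $R[T^{-1}]$. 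The key observation is that $\mathscr P$, being an $R[T^{-1}]$-subobject, is stable under this automorphism, i.e. $t_{\mathscr M_T}(\mathscr P)=\mathscr P$; hence, writing $\iota_t=t_{\mathscr M_T}^{-1}\circ\phi$, we get $\iota_t^{-1}(\mathscr P)=\phi^{-1}\bigl(t_{\mathscr M_T}(\mathscr P)\bigr)=\phi^{-1}(\mathscr P)=\mathscr N$ for every $t\in T$. Thus the pullback of the monomorphism $\mathscr P\hookrightarrow\mathscr M_T$ along each structure morphism $\iota_t$ is the single subobject $\mathscr N\subseteq\mathscr M$.

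Now I would invoke that finite limits commute with filtered colimits in $\mathscr S_R$: pulling $\mathscr P\hookrightarrow\mathscr M_T$ back along the morphisms $\iota_t$ and passing to the colimit gives $\mathscr P=\mathscr P\times_{\mathscr M_T}\mathscr M_T=\varinjlim_{t\in T}\iota_t^{-1}(\mathscr P)=\varinjlim_{t\in T}\mathscr N$, where the transition morphisms of this last system are the restrictions to $\mathscr N$ of the morphisms $s_{\mathscr M}$, namely the morphisms $s_{\mathscr N}$. But this is precisely the filtered colimit computing $\mathscr N_T=\mathscr N\otimes_RR[T^{-1}]$, and under this identification the inclusion $\mathscr P\hookrightarrow\mathscr M_T$ is the morphism induced on colimits by the cocone $(\iota_t|_{\mathscr N})_{t\in T}$, which is exactly the canonical morphism $\mathscr N_T\to\mathscr M_T$ (a monomorphism, since $R[T^{-1}]$ is $R$-flat and localization is exact on $\mathscr S_R$). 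Therefore $\mathscr N_T=\mathscr P$ inside $\mathscr M_T$, as desired.

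The main obstacle is the bookkeeping: establishing that $\iota_t^{-1}(\mathscr P)$ is literally the same subobject of $\mathscr M$ for all $t$ (this is where the $R[T^{-1}]$-stability of $\mathscr P$ is used), and then checking that the filtered colimit of these pullbacks, together with its natural morphism to $\mathscr M_T$, reproduces $\mathscr N_T$ with its standard embedding rather than merely an abstractly isomorphic object (this is where the exactness of localization and the explicit form of the colimit enter). Everything else is formal.
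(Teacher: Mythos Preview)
Your proof is correct. Both you and the paper take $\mathscr N=\phi^{-1}(\mathscr P)$ via the pullback along the canonical map $\phi:\mathscr M\to\mathscr M_T$; the difference is only in how you verify $\mathscr N_T=\mathscr P$. The paper simply applies the exact functor $\_\_\otimes_RR[T^{-1}]$ to the pullback square and uses the idempotence identities $\mathscr M_T\otimes_RR[T^{-1}]=\mathscr M_T$ and $\mathscr P\otimes_RR[T^{-1}]=\mathscr P$ (the latter because $\mathscr P$ is already an $R[T^{-1}]$-module object), so the localized square is again a pullback with vertical identity on the right, forcing $\mathscr N_T=\mathscr P$. Your route unwinds the filtered-colimit description of localization, checks that all preimages $\iota_t^{-1}(\mathscr P)$ coincide with $\mathscr N$ (using the $R[T^{-1}]$-stability of $\mathscr P$), and then invokes commutation of filtered colimits with pullbacks. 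This is more hands-on and makes the identification $\mathscr N_T\hookrightarrow\mathscr M_T$ with $\mathscr P\hookrightarrow\mathscr M_T$ completely explicit, at the cost of the bookkeeping you mention; the paper's argument is shorter but relies on the reader recognizing that exact functors preserve pullbacks and on the idempotence of localization.
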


\begin{proof}
Let $\mathscr K\subseteq \mathscr M_T\in \mathscr S_{R[T^{-1}]}$ be a subobject. We set up the following two pullback diagrams
\begin{equation}\label{2.7cdu}
\begin{array}{lll}
\begin{CD}
\mathscr N @>>> \mathscr M \\
@VVV @VVV \\
\mathscr K @>>> \mathscr M_T\\
\end{CD}
&\qquad\qquad\Rightarrow \qquad \qquad&\begin{CD}
\mathscr N \otimes_RR[T^{-1}]@>>> \mathscr M\otimes_RR[T^{-1}] =\mathscr M_T\\
@VVV @VVV \\
\mathscr K=\mathscr K\otimes_RR[T^{-1}] @>>> \mathscr M_T\otimes_RR[T^{-1}]=\mathscr M_T\\
\end{CD}\\
\end{array}
\end{equation} where the right hand square in \eqref{2.7cdu} follows by applying the exact functor $\_\_\otimes_RR[T^{-1}]$ to the left hand square.  
We note that $\mathscr M_T=\mathscr M\otimes_RR[T^{-1}]=\mathscr M\otimes_RR[T^{-1}]\otimes_RR[T^{-1}]=\mathscr M_T\otimes_RR[T^{-1}]$ and 
$\mathscr K\otimes_RR[T^{-1}] =\mathscr K\otimes_{R[T^{-1}]}R[T^{-1}]\otimes_RR[T^{-1}]=\mathscr K$. From \eqref{2.7cdu}, it is now clear that $\mathscr N$ is a subobject of
$\mathscr M$ and that $\mathscr K=\mathscr N \otimes_RR[T^{-1}]$. 
\end{proof}

\begin{thm}\label{P2.12qa}
Let $\mathscr E\in \mathscr S_R$ be an injective object and let $T\subseteq R$ be a multiplicatively closed subset. Then, the localization $\mathscr E_T$ is an injective object in
$\mathscr S_{R[T^{-1}]}$. 
\end{thm}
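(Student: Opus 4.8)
The plan is to prove injectivity of $\mathscr E_T$ in $\mathscr S_{R[T^{-1}]}$ by Baer's criterion, using Lemma \ref{L2.11fvd} to recast the relevant extension problem as one in $\mathscr S_R$ that is then solved by injectivity of $\mathscr E$. Since $R[T^{-1}]$ is again a commutative noetherian $k$-algebra, the category $\mathscr S_{R[T^{-1}]}$ is locally noetherian. Fix a generating family $\{\mathscr G_\lambda\}$ of noetherian objects of $\mathscr S_R$. The first step is to check that $\{(\mathscr G_\lambda)_T\}$ is a generating family of noetherian objects of $\mathscr S_{R[T^{-1}]}$: because localization is exact and preserves all colimits, it carries a presentation of $\iota\mathscr X$ (for $\mathscr X\in\mathscr S_{R[T^{-1}]}$, with $\iota$ the restriction-of-scalars functor along $R\to R[T^{-1}]$) as a sum of images of maps out of the $\mathscr G_\lambda$ to the analogous presentation of $(\iota\mathscr X)_T\cong\mathscr X$; and each $(\mathscr G_\lambda)_T$ is finitely generated in $\mathscr S_{R[T^{-1}]}$ (using the adjunction below together with finite generation of $\mathscr G_\lambda$ in $\mathscr S_R$), hence noetherian. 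By Baer's criterion it then suffices to show: for every $\lambda$, every subobject $\mathscr H\subseteq(\mathscr G_\lambda)_T$ in $\mathscr S_{R[T^{-1}]}$, and every morphism $\phi\colon\mathscr H\to\mathscr E_T$, the morphism $\phi$ extends along $\mathscr H\hookrightarrow(\mathscr G_\lambda)_T$.

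By Lemma \ref{L2.11fvd} (and its proof) we may write $\mathscr H=(\mathscr H_0)_T$ for a subobject $\mathscr H_0\subseteq\mathscr G_\lambda$ in $\mathscr S_R$, with the inclusion $\mathscr H\hookrightarrow(\mathscr G_\lambda)_T$ equal to the localization of $\mathscr H_0\hookrightarrow\mathscr G_\lambda$. Now I would invoke two facts: first, that the localization functor $(\_\_)_T\colon\mathscr S_R\to\mathscr S_{R[T^{-1}]}$ is left adjoint to $\iota$; second, that $\mathscr E_T=\varinjlim_{t\in T}\bigl(\mathscr E\xrightarrow{\,t_{\mathscr E}\,}\mathscr E\bigr)$ as a filtered colimit in $\mathscr S_R$ (see \cite[$\S$ B6.1]{AZ}). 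Since $\mathscr G_\lambda$, and therefore its subobject $\mathscr H_0$, is noetherian, it is finitely presented, so $\mathscr S_R(\mathscr G_\lambda,\_\_)$ and $\mathscr S_R(\mathscr H_0,\_\_)$ commute with this filtered colimit; combining this with the adjunction gives natural isomorphisms
\[
\mathscr S_{R[T^{-1}]}\bigl((\mathscr G_\lambda)_T,\mathscr E_T\bigr)\;\cong\;\mathscr S_R(\mathscr G_\lambda,\mathscr E)_T,\qquad \mathscr S_{R[T^{-1}]}\bigl((\mathscr H_0)_T,\mathscr E_T\bigr)\;\cong\;\mathscr S_R(\mathscr H_0,\mathscr E)_T,
\]
the localizations on the right being taken with respect to the $R$-module structures induced by $\mathscr E$. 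Under these identifications the restriction map on the left corresponds to the localization at $T$ of the restriction map $\mathscr S_R(\mathscr G_\lambda,\mathscr E)\to\mathscr S_R(\mathscr H_0,\mathscr E)$.

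Finally, since $\mathscr E$ is injective in $\mathscr S_R$ and $\mathscr H_0\hookrightarrow\mathscr G_\lambda$ is a monomorphism, $\mathscr S_R(\mathscr G_\lambda,\mathscr E)\to\mathscr S_R(\mathscr H_0,\mathscr E)$ is surjective, and localization is exact, so its localization at $T$ is surjective; hence $\phi$ lies in the image, i.e.\ extends to $(\mathscr G_\lambda)_T$, and Baer's criterion yields the claim. The step I expect to require the most care is the bookkeeping in the previous paragraph — tracking that the adjunction isomorphism, the colimit description of $(\_\_)_T$, and the commutation of $\mathscr S_R(\mathscr G_\lambda,\_\_)$ with filtered colimits are all natural enough that the reduced statement really is ``localize a surjection'' — together with the verification that $\{(\mathscr G_\lambda)_T\}$ is a generating family of noetherian objects so that Baer's criterion is applicable. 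Alternatively, one can bypass Baer's criterion: $\iota$ is exact and fully faithful, so an object of $\mathscr S_{R[T^{-1}]}$ whose underlying $R$-module object is injective in $\mathscr S_R$ is injective in $\mathscr S_{R[T^{-1}]}$; and since the hereditary torsion theory of $T$-torsion objects on the locally noetherian category $\mathscr S_R$ is stable, one may split $\mathscr E=\mathscr E_{\mathrm{tor}}\oplus\mathscr E'$ with $\mathscr E'$ injective and $T$-torsion-free, check that every $t\in T$ acts invertibly on $\mathscr E'$, and conclude $\mathscr E_T\cong\mathscr E'$.
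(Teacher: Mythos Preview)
Your proof is correct and follows essentially the same route as the paper: both verify Baer's criterion in $\mathscr S_{R[T^{-1}]}$ against the localized noetherian generators $(\mathscr G_\lambda)_T$, invoke Lemma~\ref{L2.11fvd} to pull a subobject back to $\mathscr S_R$, and then use finite generation together with the filtered-colimit description of $\mathscr E_T$ to turn the surjection $\mathscr S_R(\mathscr G_\lambda,\mathscr E)\twoheadrightarrow\mathscr S_R(\mathscr H_0,\mathscr E)$ into the required surjection after localization. The paper cites \cite[Proposition B5.1, Corollary B3.17]{AZ} directly for the fact that $\{(\mathscr G_\lambda)_T\}$ are noetherian generators, whereas you sketch this via the adjunction; your closing alternative via the stable torsion theory is a genuinely different (and slicker) argument that the paper does not pursue.
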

\begin{proof}
Let $\{\mathscr P_i\}_{i\in I}$ be a set of noetherian generators for $\mathscr S_R$. Since $\mathscr S$  is strongly locally noetherian, it follows from \cite[Proposition B5.1, Corollary B3.17]{AZ} that $\{\mathscr P_i\otimes_RR[T^{-1}]\}_{i\in I}$ is a set of noetherian generators for $\mathscr S_{R[T^{-1}]}$. We choose one of these generators $\mathscr P\in \{\mathscr P_i\}_{i\in I}$ and consider a subobject $\mathscr K\subseteq \mathscr P_T$ in $\mathscr S_{R[T^{-1}]}$. Applying Lemma \ref{L2.11fvd}, we have a subobject $\mathscr N\hookrightarrow 
\mathscr P$ in $\mathscr S_R$ such that the localization $\mathscr N_T=\mathscr K$. We note that since $\mathscr P\in \mathscr S_R$ is noetherian, so is $\mathscr N$. 

\smallskip
Since $\mathscr E\in \mathscr S_R$ is injective, it follows that we have a surjection $\mathscr S_R(\mathscr P,\mathscr E)\twoheadrightarrow \mathscr S_R(
\mathscr N,\mathscr E)$.  As mentioned above, the localization $\mathscr E_T$ may be expressed as a filtered colimit of copies of $\mathscr E$. Since $\mathscr N$, $\mathscr P$ are finitely generated, we have induced surjections $\mathscr S_R(\mathscr P,\mathscr E_T)\twoheadrightarrow \mathscr S_R(
\mathscr N,\mathscr E_T)$. In other words, we have a surjection $\mathscr S_{R[T^{-1}]}(
\mathscr P_T,\mathscr E_T)=\mathscr S_{R[T^{-1}]}(\mathscr P\otimes_R R[T^{-1}],\mathscr E_T)\twoheadrightarrow \mathscr S_{R[T^{-1}]}(
\mathscr N\otimes_RR[T^{-1}],\mathscr E_T)=\mathscr S_{R[T^{-1}]}(
\mathscr K,\mathscr E_T)$. Applying Baer's criterion  (see  \cite[Prop V.2.9]{Sten}) to the Grothendieck category $\mathscr S_{R[T^{-1}]}$, it follows that $\mathscr E_T$ is injective.
\end{proof}

We now need the following fact, which does not seem to have appeared before in the literature.

\begin{lem}\label{L2.13rdv}
Let $\mathscr D$ be a locally noetherian category. Then, a filtered colimit of essential monomorphisms in $\mathscr D$ is an essential monomorphism.
\end{lem}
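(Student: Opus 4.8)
The plan is to work with a filtered diagram $(f_j : \mathscr A_j \hookrightarrow \mathscr B_j)_{j \in \mathscr J}$ of essential monomorphisms in $\mathscr D$, indexed by a filtered category $\mathscr J$, and to show that the colimit $f = \varinjlim f_j : \mathscr A = \varinjlim \mathscr A_j \to \mathscr B = \varinjlim \mathscr B_j$ is again an essential monomorphism. First I would check that $f$ is a monomorphism: in a Grothendieck (indeed AB5) category, filtered colimits are exact, so the colimit of monomorphisms is a monomorphism. This step is routine and I would dispatch it in one line. The substance is essentiality, and the standard reduction is: a monomorphism $\mathscr A \hookrightarrow \mathscr B$ is essential if and only if every non-zero subobject $\mathscr C \subseteq \mathscr B$ meets $\mathscr A$, i.e.\ $\mathscr C \cap \mathscr A \neq 0$.

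The key step is to exploit local noetherianity to reduce testing essentiality to noetherian subobjects. Fix a non-zero subobject $\mathscr C \subseteq \mathscr B$; I want $\mathscr C \cap \mathscr A \neq 0$. Since $\mathscr D$ is locally noetherian, $\mathscr C$ contains a non-zero noetherian subobject $\mathscr C_0$, and it suffices to show $\mathscr C_0 \cap \mathscr A \neq 0$; so I may assume $\mathscr C$ itself is noetherian. Now a noetherian (hence finitely generated, hence compact) object has the property that the canonical map $\varinjlim_j \mathscr D(\mathscr C, \mathscr B_j) \to \mathscr D(\mathscr C, \mathscr B)$ is an isomorphism, and more to the point, the subobject lattice of $\mathscr B = \varinjlim \mathscr B_j$ restricted to subobjects of $\mathscr C$ is governed by finite stages: writing $u_j : \mathscr B_j \to \mathscr B$ for the structure maps, one has $\mathscr C = \bigcup_j \big(\mathscr C \cap u_j(\mathscr B_j)\big)$ as a filtered union, and since $\mathscr C$ is noetherian this ascending chain stabilizes, so $\mathscr C = \mathscr C \cap u_{j_0}(\mathscr B_{j_0})$ for some $j_0$, i.e.\ $\mathscr C \subseteq u_{j_0}(\mathscr B_{j_0})$. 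Replacing $j_0$ by a later index if necessary and using that colimit transition maps and structure maps are monomorphisms here (filtered colimits of monos are monos in an AB5 category), I can regard $\mathscr B_{j_0}$ as a subobject of $\mathscr B$ containing $\mathscr C$, with $\mathscr A_{j_0} \subseteq \mathscr B_{j_0}$ an essential subobject and $\mathscr A_{j_0} \subseteq \mathscr A$.

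With this in place, the conclusion is immediate: $\mathscr C$ is a non-zero subobject of $\mathscr B_{j_0}$, and since $\mathscr A_{j_0} \hookrightarrow \mathscr B_{j_0}$ is essential, $\mathscr C \cap \mathscr A_{j_0} \neq 0$; as $\mathscr A_{j_0} \subseteq \mathscr A$ inside $\mathscr B$, we get $\mathscr C \cap \mathscr A \neq 0$, as required. Thus every non-zero subobject of $\mathscr B$ meets $\mathscr A$, so $f$ is essential.

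I expect the main obstacle to be the bookkeeping in the reduction step: making precise, in the possibly large filtered category $\mathscr J$, that a noetherian subobject $\mathscr C$ of the colimit $\mathscr B$ actually factors through one of the stages $\mathscr B_{j_0}$ in a way compatible with the monomorphisms $\mathscr A_j \hookrightarrow \mathscr B_j$. This is exactly where local noetherianity is used, via the fact that noetherian objects are compact (finitely presented) so that $\mathscr D(\mathscr C, -)$ commutes with filtered colimits, together with the AB5 exactness that keeps all the transition maps monomorphic; once those two facts are invoked cleanly the argument is short. A secondary point worth stating carefully is that one only needs to test essentiality against noetherian subobjects of $\mathscr B$, which is a standard consequence of $\mathscr D$ being locally noetherian and which I would isolate as the first move.
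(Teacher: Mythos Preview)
Your approach is essentially the paper's: reduce to a noetherian subobject $\mathscr C$ of the colimit, factor it through a finite stage $\mathscr B_{j_0}$, apply essentiality of $\mathscr A_{j_0}\hookrightarrow\mathscr B_{j_0}$ there, and push the non-zero intersection forward. However, there is a slip in the bookkeeping. You assert that the structure maps $u_{j_0}:\mathscr B_{j_0}\to\mathscr B$ (and the transition maps of the system) are monomorphisms, justifying this by ``filtered colimits of monos are monos in an AB5 category.'' That principle only tells you the colimit map $f:\mathscr A\to\mathscr B$ is a monomorphism; it says nothing about the structure maps of the diagram $(\mathscr B_j)$, whose transition maps are entirely arbitrary. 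Indeed, in the paper's intended application (localization), the transition maps are multiplication by ring elements and are typically \emph{not} monomorphisms, so you cannot ``regard $\mathscr B_{j_0}$ as a subobject of $\mathscr B$.''

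The repair is already present in your own text: use compactness of $\mathscr C$ directly, rather than the image argument. Since $\mathscr C$ is noetherian (hence compact), the inclusion $\mathscr C\hookrightarrow\mathscr B=\varinjlim\mathscr B_j$ factors as $\mathscr C\xrightarrow{g}\mathscr B_{j_0}\xrightarrow{u_{j_0}}\mathscr B$ for some $j_0$, and $g$ is automatically a monomorphism because its composite with $u_{j_0}$ is. This is precisely what the paper does. Now essentiality of $\mathscr A_{j_0}$ in $\mathscr B_{j_0}$ gives $g(\mathscr C)\cap\mathscr A_{j_0}\ne 0$ inside $\mathscr B_{j_0}$; its image in $\mathscr B$ is non-zero (it lies in $\mathscr C$, and $u_{j_0}\circ g$ is mono) and lies in $\mathscr A$ (since $\mathscr A_{j_0}\to\mathscr B_{j_0}\to\mathscr B$ factors through $\mathscr A\hookrightarrow\mathscr B$). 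Drop the claim about structure maps being monic and the proof goes through.
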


\begin{proof}
Let $\{\mathscr B_i\longrightarrow \mathscr A_i\}_{i\in I}$ be a filtered system of essential monomorphisms. Then, $\mathscr B:=\underset{i\in I}{\varinjlim}\textrm{ }\mathscr B_i\longrightarrow \mathscr A:=\underset{i\in I}{\varinjlim}\textrm{ }\mathscr A_i$ is a monomorphism and we claim that $\mathscr B\subseteq \mathscr A$ is essential. For this, we consider a subobject $0\ne \mathscr F\subseteq \mathscr A$ and suppose that $\mathscr F\cap \mathscr B=0$. Since every object in $\mathscr D$ is the sum of its finitely generated subobjects, it is enough to consider the case where $\mathscr F$ is finitely generated. 

\smallskip
Accordingly, we can choose $j\in I$ large enough such that the inclusion $\mathscr F\hookrightarrow \mathscr A$ factors through the canonical morphism $\mathscr A_j\longrightarrow \mathscr A$.  Then, $\mathscr F\longrightarrow \mathscr A_j$ must be a monomorphism.  Since $\mathscr B_j\subseteq \mathscr A_j$ is essential, we have $\mathscr F\cap \mathscr B_j\ne 0$. Since $\mathscr F\cap \mathscr B_j\subseteq \mathscr F\cap \mathscr B$, it follows that  $\mathscr F\cap \mathscr B\ne 0$, which is a contradiction.
\end{proof}

\begin{thm}\label{P2.14ju} Let $T\subseteq R$ be a multiplicatively closed subset. Let $\mathscr N\in \mathscr S_R$ and let $\mathscr E(\mathscr N)$ be its injective hull. Then, the localization $\mathscr E(\mathscr N)_T$ is the injective hull of $\mathscr N_T$ in $\mathscr S_{R[T^{-1}]}$. 

\end{thm}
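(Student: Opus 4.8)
The plan is to combine the two facts already proved in this section about localization: that localization is exact on $\mathscr S_R$ (so it sends monomorphisms to monomorphisms and preserves direct sums), that it sends injectives to injectives (Proposition \ref{P2.12qa}), and that it sends essential monomorphisms to essential monomorphisms — the last point being exactly where Lemma \ref{L2.13rdv} enters, since $\mathscr M_T$ is by construction the filtered colimit of copies of $\mathscr M$ along the maps $t_{\mathscr M}$. Concretely, first I would observe that applying the exact functor $\_\_\otimes_R R[T^{-1}]$ to the essential monomorphism $\mathscr N\hookrightarrow \mathscr E(\mathscr N)$ yields a monomorphism $\mathscr N_T\hookrightarrow \mathscr E(\mathscr N)_T$ in $\mathscr S_{R[T^{-1}]}$, and that $\mathscr E(\mathscr N)_T$ is injective in $\mathscr S_{R[T^{-1}]}$ by Proposition \ref{P2.12qa}. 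So it remains only to check that this monomorphism is essential; once that is done, $\mathscr E(\mathscr N)_T$ is an injective object containing $\mathscr N_T$ as an essential subobject, which is precisely the characterization of the injective hull.

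For the essentiality, I would write $\mathscr E(\mathscr N)_T = \varinjlim_{t\in T}\mathscr E(\mathscr N)$ and $\mathscr N_T = \varinjlim_{t\in T}\mathscr N$, where in both filtered colimits the transition morphism indexed by passage from $t$ to $ts$ is $s_{(-)}$, and the inclusion $\mathscr N_T\hookrightarrow\mathscr E(\mathscr N)_T$ is the colimit of the system of monomorphisms $\mathscr N\hookrightarrow\mathscr E(\mathscr N)$ (each a copy of the original essential inclusion, which commutes with the maps $s_{(-)}$ since those are morphisms of $R$-module objects). Each of these is an essential monomorphism, and $\mathscr S_{R[T^{-1}]}$ is locally noetherian (it is $\mathscr S_{R'}$ for the noetherian $k$-algebra $R' = R[T^{-1}]$, with $\mathscr S$ strongly locally noetherian); hence Lemma \ref{L2.13rdv} applies and the colimit $\mathscr N_T\hookrightarrow\mathscr E(\mathscr N)_T$ is an essential monomorphism in $\mathscr S_{R[T^{-1}]}$.

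The main point requiring care is the bookkeeping in identifying the colimit diagrams: one must be sure that the inclusions $\mathscr N\hookrightarrow\mathscr E(\mathscr N)$ really do assemble into a morphism of filtered diagrams over $T$ (equivalently, that the squares with vertical maps $\mathscr N\hookrightarrow\mathscr E(\mathscr N)$ and horizontal maps $s_{\mathscr N}$, $s_{\mathscr E(\mathscr N)}$ commute), which is immediate because $\mathscr N\hookrightarrow\mathscr E(\mathscr N)$ is a morphism in $\mathscr S_R$ and $s_{(-)}$ is natural in the $R$-module object; and that passing to the colimit of this morphism of diagrams genuinely recovers the monomorphism $\mathscr N_T\hookrightarrow\mathscr E(\mathscr N)_T$ obtained by applying $\_\_\otimes_R R[T^{-1}]$, which follows from the description of localization as such a filtered colimit in \cite[$\S$ B6.1]{AZ}. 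Once those identifications are in place, the three inputs — exactness, preservation of injectives, and Lemma \ref{L2.13rdv} — combine formally, and there is no serious obstacle.
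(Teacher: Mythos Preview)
Your proposal is correct and follows essentially the same approach as the paper: use Proposition \ref{P2.12qa} to see that $\mathscr E(\mathscr N)_T$ is injective in $\mathscr S_{R[T^{-1}]}$, and use Lemma \ref{L2.13rdv} together with the description of localization as a filtered colimit to conclude that $\mathscr N_T\hookrightarrow \mathscr E(\mathscr N)_T$ remains essential. The paper's proof is terser, but your additional bookkeeping (naturality of $s_{(-)}$, identification of the colimit map with $\_\_\otimes_R R[T^{-1}]$) only makes explicit what the paper leaves implicit.
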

\begin{proof}
We know that the canonical inclusion $\mathscr N\hookrightarrow\mathscr E(\mathscr N)$ is essential. Since the localization is obtained by taking a filtered colimit, it follows from 
Lemma \ref{L2.13rdv} that $\mathscr N_T\hookrightarrow \mathscr E(\mathscr N)_T$ is essential. Since $\mathscr E(\mathscr N)$ is injective in $\mathscr S_R$, we know from Proposition \ref{P2.12qa} that $\mathscr E(\mathscr N)_T$ is injective in $\mathscr S_{R[T^{-1}]}$. The result is now clear. 
\end{proof}

\begin{thm}\label{P2.15cwi}
Let $\mathscr S$ be a strongly locally noetherian Grothendieck category and let $R$ be a commutative noetherian $k$-algebra. Let $\mathscr N\subseteq \mathscr M$ be an essential subobject in $\mathscr S_R$. Then, $Supp(\mathscr N)=Supp(\mathscr M)$. 
\end{thm}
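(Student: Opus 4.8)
The plan is to reduce the statement to the fact that localization sends essential monomorphisms to essential monomorphisms, which will be extracted from Lemma \ref{L2.13rdv}. One inclusion is free: for any prime $\mathfrak p\subseteq R$, exactness of localization on $\mathscr S_R$ gives a monomorphism $\mathscr N_{\mathfrak p}\hookrightarrow \mathscr M_{\mathfrak p}$, so $\mathscr N_{\mathfrak p}\ne 0$ forces $\mathscr M_{\mathfrak p}\ne 0$; hence $Supp(\mathscr N)\subseteq Supp(\mathscr M)$. (Equivalently, apply Proposition \ref{L3.8uh}(d) to $0\to\mathscr N\to\mathscr M\to\mathscr M/\mathscr N\to 0$.)

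For the reverse inclusion, fix a prime $\mathfrak p\subseteq R$ and set $T:=R\backslash\mathfrak p$. Recall from \cite[$\S$ B6.1]{AZ} that $\mathscr M_{\mathfrak p}$ is the filtered colimit of copies of $\mathscr M$ connected by the endomorphisms $t_{\mathscr M}$, $t\in T$, and that $\mathscr N_{\mathfrak p}$ is the filtered colimit of copies of $\mathscr N$ connected by the $t_{\mathscr N}$, both being diagrams over the same filtered index category. Since the essential inclusion $\mathscr N\hookrightarrow\mathscr M$ is a morphism in $\mathscr S_R$, it commutes with multiplication by every $t\in T$, so it defines a morphism of these two diagrams whose value at each node is the same essential monomorphism $\mathscr N\hookrightarrow\mathscr M$. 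Applying Lemma \ref{L2.13rdv} in the locally noetherian category $\mathscr S_R$, the colimit map $\mathscr N_{\mathfrak p}\hookrightarrow\mathscr M_{\mathfrak p}$ is again an essential monomorphism.

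Now suppose $\mathfrak p\in Supp(\mathscr M)$, i.e. $\mathscr M_{\mathfrak p}\ne 0$. An essential subobject of a non-zero object is non-zero, so $\mathscr N_{\mathfrak p}\ne 0$, that is $\mathfrak p\in Supp(\mathscr N)$. This yields $Supp(\mathscr M)\subseteq Supp(\mathscr N)$ and finishes the proof; combined with Proposition \ref{P2.14ju} it also gives $Supp(\mathscr M)=Supp(\mathscr E(\mathscr M))$ for every $\mathscr M\in\mathscr S_R$. The only step requiring genuine care is the bookkeeping in the middle paragraph: checking that the localization maps $\mathscr N_{\mathfrak p}\hookrightarrow\mathscr M_{\mathfrak p}$ really present themselves as a filtered colimit of essential monomorphisms over a common index category, so that Lemma \ref{L2.13rdv} applies verbatim; once this is arranged, everything else is formal.
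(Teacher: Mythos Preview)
Your proof is correct, and it takes a more direct route than the paper's. Both arguments ultimately rest on Lemma \ref{L2.13rdv}, but they use it differently. The paper first invokes Proposition \ref{P2.14ju} (which combines Lemma \ref{L2.13rdv} with Proposition \ref{P2.12qa} to show $\mathscr E(\mathscr N)_{\mathfrak p}$ is the injective hull of $\mathscr N_{\mathfrak p}$) to obtain $Supp(\mathscr N)=Supp(\mathscr E(\mathscr N))$, and then sandwiches $\mathscr M$ via $\mathscr N\subseteq\mathscr M\subseteq\mathscr E(\mathscr N)$. You instead apply Lemma \ref{L2.13rdv} directly to the essential inclusion $\mathscr N\hookrightarrow\mathscr M$, expressed as a filtered colimit over the multiplicative set $R\backslash\mathfrak p$, to see that $\mathscr N_{\mathfrak p}\hookrightarrow\mathscr M_{\mathfrak p}$ is itself essential. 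Your approach is leaner: it bypasses the injectivity of $\mathscr E(\mathscr N)_{\mathfrak p}$ (Proposition \ref{P2.12qa}) altogether, and it shows slightly more, namely that localization preserves essentiality for arbitrary essential pairs, not just for $\mathscr N\subseteq\mathscr E(\mathscr N)$. The paper's route, on the other hand, packages the result so that $Supp(\mathscr M)=Supp(\mathscr E(\mathscr M))$ falls out immediately from Proposition \ref{P2.14ju}, which is convenient for the later applications involving injective hulls of $R$-elementary objects.
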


\begin{proof}
From Proposition \ref{P2.14ju}, it is clear that $Supp(\mathscr N)=Supp(\mathscr E(\mathscr N))$. In general, if $\mathscr N\subseteq \mathscr M$ is essential, there is an inclusion
$\mathscr M\hookrightarrow \mathscr E(\mathscr N)$. Accordingly, $Supp(\mathscr N)=Supp(\mathscr M)=Supp(\mathscr E(\mathscr N))$. 
\end{proof}

\section{Local cohomology objects for a pair of ideals}

Let $I$, $J$ be ideals in $R$.  If $\mathscr M\in \mathscr S_R$ and $\mathfrak G$ is any family of subobjects of $\mathscr M$, we write $\bigoplus\mathfrak G$ (resp. $\sum \mathfrak G$) for the direct sum (resp.  the sum in $\mathscr M$) of all the objects in $\mathfrak G$. We now define
\begin{equation}\label{4.1dn}
|\Gamma_{I,J}(\mathscr M)|:=\{\mbox{$\mathscr N\in fg(\mathscr M)$ $\vert$ $I^n\subseteq Ann(\mathscr N)+J$ for $n\gg 1$ }\}\qquad
\Gamma_{I,J}(\mathscr M):=\sum |\Gamma_{I,J}(\mathscr M)| 
\end{equation} The definition in \eqref{4.1dn} is motivated by the notion of local cohomology for modules with respect to a pair of ideals introduced by
Takahashi, Yoshino and Yoshizawa in \cite{TYY}.  We will say that $\mathscr M\in \mathscr S_R$ is $(I,J)$-torsion if $\Gamma_{I,J}(\mathscr M)=\mathscr M$. We will say that $\mathscr M
\in \mathscr S_R$ is $(I,J)$-torsion free if $\Gamma_{I,J}(\mathscr M)=0$. When $J=0$, we will denote $\Gamma_{I,J}$ simply by $\Gamma_I$. 

\begin{lem}\label{L4.1qu} (a) 
For $\mathscr M\in \mathscr S_R$, the family $|\Gamma_{I,J}(\mathscr M)|$ is closed under subobjects and finite sums. Further, any finitely generated  $\mathscr N \subseteq \Gamma_{I,J}(\mathscr M)$ lies in the family $|\Gamma_{I,J}(\mathscr M)|$.

\smallskip
(b) $\Gamma_{I,J}$ is a left exact functor on $\mathscr S_R$.
\end{lem}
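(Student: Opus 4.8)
The plan is to reduce everything to the definition \eqref{4.1dn} of $|\Gamma_{I,J}(\mathscr M)|$ together with elementary ideal arithmetic; the only categorical facts I need are that $\mathscr S_R$ is locally noetherian (so subobjects of finitely generated objects are finitely generated, and every object is the filtered union of its finitely generated subobjects) and that in a Grothendieck category images commute with sums of subobjects. For part (a) I would argue in three steps. \emph{Closure under subobjects:} if $\mathscr N\in|\Gamma_{I,J}(\mathscr M)|$ and $\mathscr N'\subseteq\mathscr N$, then $\mathscr N'$ is finitely generated by local noetherianity and $Ann(\mathscr N)\subseteq Ann(\mathscr N')$, so $I^n\subseteq Ann(\mathscr N)+J\subseteq Ann(\mathscr N')+J$ for $n\gg 1$. \emph{Closure under finite sums:} it suffices to treat $\mathscr N_1+\mathscr N_2$ with $\mathscr N_1,\mathscr N_2\in|\Gamma_{I,J}(\mathscr M)|$; this is finitely generated, and since an element of $R$ killing both $\mathscr N_1$ and $\mathscr N_2$ kills $\mathscr N_1\oplus\mathscr N_2$ and hence its epimorphic image, we have $A_1\cap A_2\subseteq Ann(\mathscr N_1+\mathscr N_2)$, where $A_i:=Ann(\mathscr N_i)$; choosing $n_i$ with $I^{n_i}\subseteq A_i+J$,
\[
I^{n_1+n_2}\subseteq (A_1+J)(A_2+J)\subseteq A_1A_2+J\subseteq (A_1\cap A_2)+J\subseteq Ann(\mathscr N_1+\mathscr N_2)+J.
\]
\emph{Finitely generated subobjects of $\Gamma_{I,J}(\mathscr M)$:} since $\Gamma_{I,J}(\mathscr M)=\sum|\Gamma_{I,J}(\mathscr M)|$ is the filtered colimit of the finite partial sums of members of $|\Gamma_{I,J}(\mathscr M)|$, a finitely generated $\mathscr N\subseteq\Gamma_{I,J}(\mathscr M)$ factors through some $\mathscr N_1+\cdots+\mathscr N_m$ with $\mathscr N_i\in|\Gamma_{I,J}(\mathscr M)|$; by the first two steps this sum lies in $|\Gamma_{I,J}(\mathscr M)|$, and then so does $\mathscr N$.

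For part (b), I would first note that $\Gamma_{I,J}$ is a subfunctor of the identity: given $f:\mathscr M\to\mathscr M'$ and $\mathscr N\in|\Gamma_{I,J}(\mathscr M)|$, the image $f(\mathscr N)\subseteq\mathscr M'$ is finitely generated with $Ann(\mathscr N)\subseteq Ann(f(\mathscr N))$ (an element killing $\mathscr N$ kills its epimorphic image), so $f(\mathscr N)\in|\Gamma_{I,J}(\mathscr M')|$; since images commute with sums, $f(\Gamma_{I,J}(\mathscr M))\subseteq\Gamma_{I,J}(\mathscr M')$. For left exactness of $0\to\mathscr M'\xrightarrow{i}\mathscr M\xrightarrow{p}\mathscr M''$, the crucial identity is $\Gamma_{I,J}(\mathscr M')=\mathscr M'\cap\Gamma_{I,J}(\mathscr M)$, the intersection taken inside $\mathscr M$. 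The inclusion $\subseteq$ is functoriality; for $\supseteq$, every finitely generated subobject $\mathscr N$ of $\mathscr M'\cap\Gamma_{I,J}(\mathscr M)$ lies in $|\Gamma_{I,J}(\mathscr M)|$ by part (a), and since the condition $I^n\subseteq Ann(\mathscr N)+J$ makes no reference to the ambient object, $\mathscr N$ also lies in $|\Gamma_{I,J}(\mathscr M')|$, whence $\mathscr N\subseteq\Gamma_{I,J}(\mathscr M')$; as $\mathscr M'\cap\Gamma_{I,J}(\mathscr M)$ is the union of such $\mathscr N$, equality follows. Granting this, $\Gamma_{I,J}(i)$ is a monomorphism with image $\mathscr M'\cap\Gamma_{I,J}(\mathscr M)$, which is exactly the kernel of the map $\Gamma_{I,J}(\mathscr M)\to\Gamma_{I,J}(\mathscr M'')$ induced by $p$ (since $\ker p=\mathscr M'$), giving exactness of $0\to\Gamma_{I,J}(\mathscr M')\to\Gamma_{I,J}(\mathscr M)\to\Gamma_{I,J}(\mathscr M'')$.

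The whole argument is formal and I expect no genuine obstacle. The one spot deserving slight care is the third step of part (a) — the passage between ``finitely generated subobject of $\Gamma_{I,J}(\mathscr M)$'' and ``member of $|\Gamma_{I,J}(\mathscr M)|$'' — since it is precisely this statement that makes the intersection identity $\Gamma_{I,J}(\mathscr M')=\mathscr M'\cap\Gamma_{I,J}(\mathscr M)$ in part (b) work, and it is there that local noetherianity of $\mathscr S_R$ is genuinely used.
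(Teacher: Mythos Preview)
Your proof is correct and follows essentially the same approach as the paper. Part (a) is identical: closure under subobjects via $Ann(\mathscr N)\subseteq Ann(\mathscr N')$, closure under finite sums via the computation $I^{n_1+n_2}\subseteq (A_1+J)(A_2+J)\subseteq (A_1\cap A_2)+J\subseteq Ann(\mathscr N_1+\mathscr N_2)+J$, and the third step via the filtered system of finite partial sums. For part (b), you are actually more thorough than the paper: the paper only writes out the functoriality and the preservation of monomorphisms, whereas you explicitly establish the identity $\Gamma_{I,J}(\mathscr M')=\mathscr M'\cap\Gamma_{I,J}(\mathscr M)$ and deduce exactness in the middle from it; this is the standard completion of the argument and your observation that part (a) is exactly what makes this intersection identity work is correct.
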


\begin{proof} (a)  It is clear that $|\Gamma_{I,J}(\mathscr M)|$ is closed under subobjects. We consider $\mathscr N_1$, $\mathscr N_2\in |\Gamma_{I,J}(\mathscr M)|$, with $I^{n_1}\subseteq Ann(\mathscr N_1)+J$ and $I^{n_2}\subseteq Ann(\mathscr N_2)+J$. Then, we have
\begin{equation}
I^{n_1+n_2}\subseteq (Ann(\mathscr N_1)+J)(Ann(\mathscr N_2)+J)\subseteq Ann(\mathscr N_1)\cap Ann(\mathscr N_2)+J\subseteq Ann(\mathscr N_1+\mathscr N_2)+J
\end{equation} Hence, $|\Gamma_{I,J}(\mathscr M)|$ is closed under finite sums.  Now if $\mathscr N \subseteq \Gamma_{I,J}(\mathscr M)$ is finitely generated, it follows from the definition in \eqref{4.1dn} that $\mathscr N$ lies within a finite sum of objects in $|\Gamma_{I,J}(\mathscr M)|$. Since $|\Gamma_{I,J}(\mathscr M)|$ is closed under subobjects and finite sums, the result is now clear.

\smallskip
(b) Let $\phi:\mathscr M'\longrightarrow \mathscr M$ be a morphism in $\mathscr S_R$. For any $\mathscr N'\subseteq \mathscr M'$, the quotient 
$\phi(\mathscr N')\subseteq \mathscr M$ satisfies $Ann(\mathscr N')\subseteq Ann(\phi(\mathscr N'))$. Accordingly, if $\mathscr N'\in |\Gamma_{I,J}(\mathscr M')|$, then $\phi(\mathscr N')\in |\Gamma_{I,J}(\mathscr M)|$.  This determines a morphism $\Gamma_{I,J}(\phi):\Gamma_{I,J}(\mathscr M')\longrightarrow 
\Gamma_{I,J}(\mathscr M)$. In particular, if $\phi:\mathscr M'\longrightarrow \mathscr M$ is a monomorphism, we have $|\Gamma_{I,J}(\mathscr M')|\subseteq 
|\Gamma_{I,J}(\mathscr M)|$. Hence, $\Gamma_{I,J}(\mathscr M')\subseteq \Gamma_{I,J}(\mathscr M)$. 
\end{proof}

As defined in \cite{TYY}, we now consider for the pair of ideals $(I,J)$ the set
\begin{equation}\label{wij}
\mathbb W(I,J)=\{\mbox{$\mathfrak p\in Spec(R)$ $\vert$ $I^n\subseteq \mathfrak p+J$ for $n\gg 1$}\}
\end{equation} When $J=0$, we note that $\mathbb W(I,J)=\mathbb V(I)$, the set of prime ideals containing $I$.

\begin{lem}\label{L4.2wo}
Let $\mathscr N\in \mathscr S_R$ be finitely generated. Then, $Supp(\mathscr N)\subseteq \mathbb W(I,J)$ if and only if $I^n\subseteq Ann(\mathscr N)+J$ for 
$n\gg 1$. 
\end{lem}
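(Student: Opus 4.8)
The plan is to reduce the statement to a purely commutative-algebraic fact about the ideal $\mathfrak a:=Ann(\mathscr N)\subseteq R$, exploiting that $\mathscr N$ is finitely generated. First I would invoke Proposition \ref{L3.8uh}(b), which gives $Supp(\mathscr N)=\mathbb V(Ann(\mathscr N))=\mathbb V(\mathfrak a)$. With this identification the assertion becomes: $\mathbb V(\mathfrak a)\subseteq \mathbb W(I,J)$ if and only if $I^n\subseteq \mathfrak a+J$ for $n\gg 1$.

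For the implication ``$I^n\subseteq\mathfrak a+J$ for some $n$ $\Rightarrow$ $\mathbb V(\mathfrak a)\subseteq\mathbb W(I,J)$'', I would simply note that for any prime $\mathfrak p\supseteq\mathfrak a$ one has $I^n\subseteq\mathfrak a+J\subseteq\mathfrak p+J$, which is precisely the condition for $\mathfrak p\in\mathbb W(I,J)$; here ``for some $n$'' and ``for $n\gg 1$'' are interchangeable since $I^{n_0}\subseteq\mathfrak a+J$ forces $I^n\subseteq I^{n_0}\subseteq\mathfrak a+J$ for all $n\geq n_0$.

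For the converse I would show $I\subseteq\sqrt{\mathfrak a+J}$; since $R$ is noetherian and $I$ is finitely generated, this radical containment upgrades to $I^n\subseteq\mathfrak a+J$ for $n\gg 1$. To establish $I\subseteq\sqrt{\mathfrak a+J}=\bigcap_{\mathfrak q\in\mathbb V(\mathfrak a+J)}\mathfrak q$, take an arbitrary prime $\mathfrak q\supseteq\mathfrak a+J$. Then $\mathfrak q\supseteq\mathfrak a$, so $\mathfrak q\in\mathbb V(\mathfrak a)\subseteq\mathbb W(I,J)$, whence $I^m\subseteq\mathfrak q+J$ for some $m\geq 1$; but $J\subseteq\mathfrak q$ gives $\mathfrak q+J=\mathfrak q$, so $I^m\subseteq\mathfrak q$, and primeness of $\mathfrak q$ yields $I\subseteq\mathfrak q$. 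As $\mathfrak q$ ranges over all primes containing $\mathfrak a+J$, this gives the desired containment.

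The argument is elementary once the reduction via Proposition \ref{L3.8uh}(b) has been made, and there is no real obstacle; the only place where the standing hypotheses genuinely intervene is in passing from the radical containment $I\subseteq\sqrt{\mathfrak a+J}$ back to a power containment, which uses noetherianity of $R$ (equivalently, finite generation of $I$).
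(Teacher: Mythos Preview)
Your proof is correct and follows the same overall strategy as the paper: reduce to commutative algebra via Proposition~\ref{L3.8uh}(b), which identifies $Supp(\mathscr N)$ with $\mathbb V(Ann(\mathscr N))$, and then argue purely in terms of ideals of $R$. The forward implication is handled identically.

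For the converse, your organization differs slightly from the paper's and is arguably cleaner. The paper works with the minimal primes over $\mathfrak a=Ann(\mathscr N)$, obtains $I^{n'}\subseteq \mathrm{rad}(\mathfrak a)+J$ for some $n'$, and then invokes noetherianity a second time to pass from $\mathrm{rad}(\mathfrak a)$ to $\mathfrak a$. You instead look at primes $\mathfrak q$ containing $\mathfrak a+J$; the extra containment $J\subseteq\mathfrak q$ collapses $\mathfrak q+J$ to $\mathfrak q$ and gives $I\subseteq\sqrt{\mathfrak a+J}$ in one step, so noetherianity is needed only once (to go from $I\subseteq\sqrt{\mathfrak a+J}$ to $I^n\subseteq\mathfrak a+J$). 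Both routes are standard, but yours avoids the intermediate stop at $\mathrm{rad}(\mathfrak a)+J$.
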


\begin{proof} 
Since $\mathscr N$ is finitely generated, it follows by Proposition \ref{L3.8uh} that $Supp(\mathscr N)=\mathbb V(Ann(\mathscr N))$. First, we suppose that $I^n\subseteq Ann(\mathscr N)+J$. Then, if $\mathfrak p\in Supp(\mathscr N)$, i.e., $\mathfrak p\supseteq Ann(\mathscr N)$, we see that $I^n\subseteq J+\mathfrak p$, i.e., $\mathfrak p\in \mathbb W(I,J)$.

\smallskip
Conversely, suppose that $Supp(\mathscr N)\subseteq \mathbb W(I,J)$. If $\mathfrak p$ is a prime ideal containing $Ann(\mathscr N)$, then there is $n_{\mathfrak p}\geq 1$ such that $I^{n_{\mathfrak p}}\subseteq {\mathfrak p}+J$. Since $R$ is noetherian, there are only finitely many primes mininal over $Ann(\mathscr N)$. It follows that we can find $n'\geq 1$ such that
$I^{n'}\subseteq rad(Ann(\mathscr N))+J$. Again since $R$ is noetherian, some power of $rad(Ann(\mathscr N))$ lies in $Ann(\mathscr N)$ and the result follows. 
\end{proof}

It follows from Lemma \ref{L4.2wo} that $\Gamma_{I,J}(\mathscr M)$ is the sum of all finitely generated subobjects $\mathscr N\in fg(\mathscr M)$ satisfying 
$Supp(\mathscr N)\subseteq \mathbb W(I,J)$. We note that this implies $Supp(\Gamma_{I,J}(\mathscr M))\subseteq \mathbb W(I,J)$. We will now describe the associated primes of $\Gamma_{I,J}(\mathscr M)$. 

\begin{thm}\label{P4.3ed}
For $\mathscr M\in \mathscr S_R$, we have $Ass(\mathscr M)\cap \mathbb W(I,J)=Ass(\Gamma_{I,J}(\mathscr M))$. 
\end{thm}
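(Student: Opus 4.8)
The plan is to prove the two inclusions $Ass(\Gamma_{I,J}(\mathscr M))\subseteq Ass(\mathscr M)\cap \mathbb W(I,J)$ and $Ass(\mathscr M)\cap \mathbb W(I,J)\subseteq Ass(\Gamma_{I,J}(\mathscr M))$ separately, the first being nearly immediate and the second requiring a small argument about $R$-elementary subobjects.

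For the inclusion $\subseteq$, suppose $\mathfrak p\in Ass(\Gamma_{I,J}(\mathscr M))$, so there is an $R$-elementary object $\mathscr L\subseteq \Gamma_{I,J}(\mathscr M)$ with $Ann(\mathscr L)=\mathfrak p$. Since $\Gamma_{I,J}(\mathscr M)\subseteq \mathscr M$, the same $\mathscr L$ witnesses $\mathfrak p\in Ass(\mathscr M)$. Moreover, $\mathscr L$ is a finitely generated subobject of $\Gamma_{I,J}(\mathscr M)$, so by Lemma~\ref{L4.1qu}(a) it lies in $|\Gamma_{I,J}(\mathscr M)|$, i.e. $I^n\subseteq Ann(\mathscr L)+J=\mathfrak p+J$ for $n\gg 1$; hence $\mathfrak p\in \mathbb W(I,J)$. (Alternatively one can invoke Lemma~\ref{L4.2wo} together with $Supp(\mathscr L)\subseteq Supp(\Gamma_{I,J}(\mathscr M))\subseteq \mathbb W(I,J)$.)

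For the reverse inclusion, take $\mathfrak p\in Ass(\mathscr M)\cap \mathbb W(I,J)$ and choose an $R$-elementary $\mathscr L\subseteq \mathscr M$ with $Ann(\mathscr L)=\mathfrak p$. The point is that $\mathscr L$ itself already lands inside $\Gamma_{I,J}(\mathscr M)$: it is finitely generated by Definition~\ref{D3.1}(a), and since $\mathfrak p\in\mathbb W(I,J)$ we have $I^n\subseteq \mathfrak p+J=Ann(\mathscr L)+J$ for $n\gg 1$, so $\mathscr L\in |\Gamma_{I,J}(\mathscr M)|$ and thus $\mathscr L\subseteq \Gamma_{I,J}(\mathscr M)$. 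Since $\mathscr L$ is an $R$-elementary subobject of $\Gamma_{I,J}(\mathscr M)$ with annihilator $\mathfrak p$, Definition~\ref{D3.1} gives $\mathfrak p\in Ass(\Gamma_{I,J}(\mathscr M))$.

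There is really no serious obstacle here; the proof is essentially a matter of unwinding the definition of $\Gamma_{I,J}$, of $\mathbb W(I,J)$, and of associated primes, and the only substantive input is Lemma~\ref{L4.1qu}(a) (that finitely generated subobjects of $\Gamma_{I,J}(\mathscr M)$ lie in the family $|\Gamma_{I,J}(\mathscr M)|$), which is needed to run the $\subseteq$ direction cleanly. If anything, the mild subtlety to double-check is that $R$-elementarity of $\mathscr L$ passes between $\mathscr M$ and $\Gamma_{I,J}(\mathscr M)$, but this is automatic since the defining conditions in Definition~\ref{D3.1} are intrinsic to $\mathscr L$ and do not refer to the ambient object.
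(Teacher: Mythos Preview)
Your proof is correct and follows essentially the same approach as the paper's own proof. The only cosmetic difference is that in the $\subseteq$ direction the paper phrases the conclusion $\mathfrak p\in\mathbb W(I,J)$ via $\mathfrak p\in Ass(\mathscr L)\subseteq Supp(\mathscr L)\subseteq \mathbb W(I,J)$ (invoking Lemma~\ref{L4.2wo}), whereas you read it off directly from $\mathscr L\in|\Gamma_{I,J}(\mathscr M)|$; you already note this alternative yourself, and the reverse inclusion matches the paper verbatim.
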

\begin{proof}
Since $\Gamma_{I,J}(\mathscr M)\subseteq \mathscr M$, we have $Ass(\Gamma_{I,J}(\mathscr M))\subseteq Ass(\mathscr M)$. If ${\mathfrak p}\in Ass(\Gamma_{I,J}(\mathscr M))$, we consider an $R$-elementary object $\mathscr L\subseteq  \Gamma_{I,J}(\mathscr M)$ such that $Ann(\mathscr L)={\mathfrak p}$. By Lemma \ref{L4.1qu} and Lemma \ref{L4.2wo}, we see that
${\mathfrak p}\in Ass(\mathscr L)\subseteq Supp(\mathscr L)\subseteq \mathbb W(I,J)$. Hence, $Ass(\Gamma_{I,J}(\mathscr M))\subseteq Ass(\mathscr M)\cap \mathbb W(I,J)$.

\smallskip
Conversely, we take ${\mathfrak p}\in Ass(\mathscr M)\cap \mathbb W(I,J)$ and consider an $R$-elementary object $\mathscr L\subseteq \mathscr M$ such that $Ann(\mathscr L)={\mathfrak p}$. Since ${\mathfrak p}\in \mathbb W(I,J)$, we can take $n\geq 1$ such that $I^n\subseteq {\mathfrak p}+J=Ann(\mathscr L)+J$. Hence, $\mathscr L\in |\Gamma_{I,J}(\mathscr M)|$ and therefore $\mathscr L\subseteq \Gamma_{I,J}(\mathscr M)$. This gives ${\mathfrak p}\in Ass(\Gamma_{I,J}(\mathscr M))$. 
\end{proof}

If $\mathcal A$ is an abelian category, we recall (see, for instance, \cite[$\S$ 1.1]{BeRe}) that a torsion theory on $\mathcal A$ consists of a pair $(\mathcal T,\mathcal F)$ of strict and full subcategories such that $\mathcal A(X,Y)=0$ for any $X\in \mathcal T$, $Y\in \mathcal F$ and any $Z\in \mathcal A$ fits into a short exact sequence $0\longrightarrow Z^{\mathcal T}
\longrightarrow Z\longrightarrow Z^{\mathcal F}\longrightarrow 0$ with $Z^{\mathcal T}\in \mathcal T$ and $Z^{\mathcal F}\in \mathcal F$.  Additionally, $(\mathcal T,\mathcal F)$ is said to be hereditary if the torsion class $\mathcal T$ is closed under subobjects.

\begin{thm}\label{torth} For ideals $I$, $J$, the pair $(\mathscr T(\mathscr S)_{I,J},\mathscr F(\mathscr S)_{I,J})$ of full subcategories of $\mathscr S_R$, where 
\begin{equation}
Ob(\mathscr T(\mathscr S)_{I,J}):=\{\mbox{$\mathscr M\in \mathscr S_R$ $\vert$ $\Gamma_{I,J}(\mathscr M)=\mathscr M$}\}
\qquad
Ob(\mathscr F(\mathscr S)_{I,J}):=\{\mbox{$\mathscr M\in \mathscr S_R$ $\vert$ $\Gamma_{I,J}(\mathscr M)=0$}\}
\end{equation}
forms a hereditary torsion theory in $\mathscr S_R$.
\end{thm}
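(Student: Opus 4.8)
The plan is to use the left exact functor $\Gamma_{I,J}$ itself as the torsion radical. For $\mathscr Z\in\mathscr S_R$ take $\mathscr Z^{\mathscr T}:=\Gamma_{I,J}(\mathscr Z)$ and $\mathscr Z^{\mathscr F}:=\mathscr Z/\Gamma_{I,J}(\mathscr Z)$, so the short exact sequence $0\to\Gamma_{I,J}(\mathscr Z)\to\mathscr Z\to\mathscr Z/\Gamma_{I,J}(\mathscr Z)\to 0$ is the candidate decomposition. It then suffices to verify: (i) $\Gamma_{I,J}(\mathscr Z)\in\mathscr T(\mathscr S)_{I,J}$, i.e. $\Gamma_{I,J}$ is idempotent; (ii) $\mathscr Z/\Gamma_{I,J}(\mathscr Z)\in\mathscr F(\mathscr S)_{I,J}$; (iii) $\mathscr S_R(\mathscr X,\mathscr Y)=0$ whenever $\mathscr X\in\mathscr T(\mathscr S)_{I,J}$, $\mathscr Y\in\mathscr F(\mathscr S)_{I,J}$; and (iv) $\mathscr T(\mathscr S)_{I,J}$ is closed under subobjects. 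Strictness and fullness of both subcategories are immediate from the functoriality of $\Gamma_{I,J}$ established in Lemma \ref{L4.1qu}(b).

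Parts (i), (iii), (iv) are all straightforward consequences of Lemma \ref{L4.1qu}(a) together with local noetherianity of $\mathscr S_R$ (every object is the filtered colimit of its finitely generated subobjects). For (i): each finitely generated $\mathscr N\subseteq\Gamma_{I,J}(\mathscr Z)$ lies in $|\Gamma_{I,J}(\mathscr Z)|$ by Lemma \ref{L4.1qu}(a), so $I^n\subseteq Ann(\mathscr N)+J$; since $\mathscr N\subseteq\Gamma_{I,J}(\mathscr Z)$ is itself finitely generated, this says $\mathscr N\in|\Gamma_{I,J}(\Gamma_{I,J}(\mathscr Z))|$, and summing over all such $\mathscr N$ gives $\Gamma_{I,J}(\Gamma_{I,J}(\mathscr Z))=\Gamma_{I,J}(\mathscr Z)$. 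For (iii): given $f:\mathscr X\to\mathscr Y$ and a finitely generated $\mathscr N\subseteq\mathscr X=\Gamma_{I,J}(\mathscr X)$, Lemma \ref{L4.1qu}(a) gives $I^n\subseteq Ann(\mathscr N)+J\subseteq Ann(f(\mathscr N))+J$ (annihilators only enlarge under quotients), so $f(\mathscr N)\in|\Gamma_{I,J}(\mathscr Y)|$ and hence $f(\mathscr N)\subseteq\Gamma_{I,J}(\mathscr Y)=0$; as the $\mathscr N$ exhaust $\mathscr X$ we get $f=0$. For (iv): if $\mathscr M'\subseteq\mathscr M$ with $\Gamma_{I,J}(\mathscr M)=\mathscr M$, then any finitely generated $\mathscr N\subseteq\mathscr M'$ is a finitely generated subobject of $\mathscr M$, hence lies in $|\Gamma_{I,J}(\mathscr M)|$ by Lemma \ref{L4.1qu}(a), hence in $|\Gamma_{I,J}(\mathscr M')|$; summing, $\Gamma_{I,J}(\mathscr M')=\mathscr M'$.

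The main obstacle is (ii). Write $\bar{\mathscr Z}=\mathscr Z/\Gamma_{I,J}(\mathscr Z)$ and let $\bar{\mathscr N}\subseteq\bar{\mathscr Z}$ be a finitely generated subobject with $I^b\subseteq Ann(\bar{\mathscr N})+J$; one must show $\bar{\mathscr N}=0$. Since $\bar{\mathscr N}$ is finitely generated and $\mathscr S_R$ is locally noetherian, I would lift $\bar{\mathscr N}$ to a finitely generated $\mathscr N\subseteq\mathscr Z$ surjecting onto it. Then $\mathscr K:=\mathscr N\cap\Gamma_{I,J}(\mathscr Z)$ is finitely generated, hence lies in $|\Gamma_{I,J}(\mathscr Z)|$ by Lemma \ref{L4.1qu}(a), so $I^a\subseteq Ann(\mathscr K)+J$, while $\mathscr N/\mathscr K\cong\bar{\mathscr N}$ gives $I^b\subseteq Ann(\mathscr N/\mathscr K)+J$. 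The key step, proved exactly by the annihilator-factorization argument used in Lemma \ref{L3.2}, is that for the short exact sequence $0\to\mathscr K\to\mathscr N\to\mathscr N/\mathscr K\to 0$ one has $Ann(\mathscr K)\,Ann(\mathscr N/\mathscr K)\subseteq Ann(\mathscr N)$: if $b$ kills $\mathscr N/\mathscr K$ then $b_{\mathscr N}$ factors through $\mathscr K$, and composing with $a_{\mathscr N}$ for $a\in Ann(\mathscr K)$ then kills $\mathscr N$. Consequently $I^{a+b}\subseteq(Ann(\mathscr K)+J)(Ann(\mathscr N/\mathscr K)+J)\subseteq Ann(\mathscr K)\,Ann(\mathscr N/\mathscr K)+J\subseteq Ann(\mathscr N)+J$, so $\mathscr N\in|\Gamma_{I,J}(\mathscr Z)|$, i.e. $\mathscr N\subseteq\Gamma_{I,J}(\mathscr Z)$, which forces $\bar{\mathscr N}=0$. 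Since $\Gamma_{I,J}(\bar{\mathscr Z})$ is the sum of all such $\bar{\mathscr N}$, it vanishes, establishing (ii) and completing the verification that $(\mathscr T(\mathscr S)_{I,J},\mathscr F(\mathscr S)_{I,J})$ is a hereditary torsion theory.
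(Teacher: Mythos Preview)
Your proof is correct. The verification of the four axioms is sound; in particular the key step in (ii) --- the inclusion $Ann(\mathscr K)\cdot Ann(\mathscr N/\mathscr K)\subseteq Ann(\mathscr N)$ for a short exact sequence $0\to\mathscr K\to\mathscr N\to\mathscr N/\mathscr K\to 0$ --- is exactly right, and the expansion $(Ann(\mathscr K)+J)(Ann(\mathscr N/\mathscr K)+J)\subseteq Ann(\mathscr N)+J$ goes through as you claim.

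Your route differs from the paper's. The paper first reformulates $|\Gamma_{I,J}(\mathscr M)|$ via Lemma~\ref{L4.2wo} as the family of finitely generated $\mathscr N\subseteq\mathscr M$ with $Supp(\mathscr N)\subseteq\mathbb W(I,J)$, and then uses additivity of support on short exact sequences (Proposition~\ref{L3.8uh}(c)): for the preimage $\mathscr N_0$ of $\mathscr N$ in $\mathscr M$ one has $Supp(\mathscr N_0)=Supp(\Gamma_{I,J}(\mathscr M))\cup Supp(\mathscr N)\subseteq\mathbb W(I,J)$, forcing $\mathscr N_0\subseteq\Gamma_{I,J}(\mathscr M)$. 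You instead stay entirely within the annihilator description and prove the extension-closure by hand. Your argument is more elementary --- it needs neither localization nor the support machinery of Section~2 --- and in fact recovers along the way the ``closed under extensions'' half of what Lemma~\ref{L4.2wo} and Proposition~\ref{L3.8uh}(c) together encode. The paper's approach is more conceptual and reuses tools already in place; yours is self-contained and would work even before those results are available.
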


\begin{proof}
We consider $\phi:\mathscr M'\longrightarrow \mathscr M''$ in $\mathscr S_R$, where $\mathscr M'\in \mathscr T(\mathscr S)_{I,J}$, $\mathscr M''\in \mathscr F(\mathscr S)_{I,J}$. We consider $\mathscr N'\in fg(\mathscr M')$.  Then, $Supp(\phi(\mathscr N'))\subseteq 
Supp(\mathscr N')\subseteq \mathbb W(I,J)$ and hence $\phi(\mathscr M')\subseteq \Gamma_{I,J}(\mathscr M'')=0$. Now for any $\mathscr M\in \mathscr S_R$, we consider the short exact sequence
\begin{equation}
0\longrightarrow \Gamma_{I,J}(\mathscr M)\longrightarrow \mathscr M\longrightarrow \mathscr M/\Gamma_{I,J}(\mathscr M)\longrightarrow 0
\end{equation} By Lemma \ref{L4.1qu}, it is clear that $\Gamma_{I,J}(\mathscr M)\in \mathscr T(\mathscr S)_{I,J}$. If $\mathscr M/\Gamma_{I,J}(\mathscr M)\notin\mathscr F(\mathscr S)_{I,J}$, there is some $0\ne \mathscr N\in fg(\mathscr M/\Gamma_{I,J}(\mathscr M))$ such that $Supp(\mathscr N)\subseteq \mathbb W(I,J)$. Then, we have a short exact sequence
\begin{equation}\label{4.6ses}
0\longrightarrow   \Gamma_{I,J}(\mathscr M)\longrightarrow \mathscr N_0\longrightarrow \mathscr N\longrightarrow 0
\end{equation} where $\mathscr N_0$ is the preimage of $\mathscr N$ in $\mathscr M$. Then, $Supp(\mathscr N_0)=Supp( \Gamma_{I,J}(\mathscr M))\cup Supp(\mathscr N)
\subseteq \mathbb W(I,J)$ and hence $\mathscr N_0\subseteq \Gamma_{I,J}(\mathscr M)$, which implies that $\mathscr N=0$. Hence, $(\mathscr T(\mathscr S)_{I,J},\mathscr F(\mathscr S)_{I,J})$ is a torsion theory on $\mathscr S_R$. The fact that $\mathscr T(\mathscr S)_{I,J}$ is closed under subobjects is also clear from Lemma \ref{L4.2wo}. 
\end{proof}

\begin{defn}\label{D4.4fv}
Let $\mathscr M\in \mathscr S_R$ and let $I,J\subseteq R$ be ideals. Then, for any $i\geq 0$, the $i$-th local cohomology object $H^i_{I,J}(\mathscr M)$ of $\mathscr M\in \mathscr S_R$ with respect to $(I,J)$ is given by the $i$-th right derived functor of $\Gamma_{I,J}$, i.e., $H^i_{I,J}(\mathscr M):=\mathbb R^i\Gamma_{I,J}(\mathscr M)$. 

\smallskip
When $J=0$, we set $H^i_{I}:=H^i_{I,0}(\mathscr M)$ and refer to $H^\bullet_I(\mathscr M)$ as the local cohomology objects with respect to $I$. 
\end{defn} 

We will now show that if $\mathscr M\in \mathscr S_R$ is $(I,J)$-torsion, then $H^{i}_{I,J}(\mathscr M)=0$ for $i>0$. This will be achieved by a sequence of steps.
First, we will show that the torsion theory $(\mathscr T(\mathscr S)_{I,J},\mathscr F(\mathscr S)_{I,J})$ satisfies the property that the torsion part of an injective in $\mathscr S_R$ is still an injective.  

\begin{lem}\label{L4.85fq}
Let $\mathscr E\in \mathscr S_R$ be an injective object. Then, for any ideal $I\subseteq R$, $\Gamma_I(\mathscr E)=\Gamma_{I,0}(\mathscr E)$ is also an injective object in $\mathscr S_R$.
\end{lem}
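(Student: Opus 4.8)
The plan is to show that $\Gamma_I(\mathscr E)$ is a direct summand of $\mathscr E$, hence injective. By Theorem \ref{T3.8bi}, write $\mathscr E=\bigoplus_{i\in I_0}\mathscr E_i$ where each $\mathscr E_i=\mathscr E(\mathscr L_i)$ is the injective hull of an $R$-elementary object $\mathscr L_i$ with $\mathfrak p_i:=Ann(\mathscr L_i)$ prime. The key point is to analyze each summand: I claim that $\Gamma_I(\mathscr E(\mathscr L))$ equals $\mathscr E(\mathscr L)$ when $\mathfrak p\in\mathbb V(I)$ (i.e.\ $\mathfrak p\supseteq I$) and equals $0$ otherwise. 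Granting this, I would then argue that $\Gamma_I$ commutes with the relevant direct sum, so $\Gamma_I(\mathscr E)=\bigoplus_{i:\ \mathfrak p_i\supseteq I}\mathscr E_i$, which is a direct summand of $\mathscr E=\bigoplus_{i\in I_0}\mathscr E_i$ and is a direct sum of injectives, hence injective because $\mathscr S_R$ is locally noetherian.

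For the per-summand claim, set $\mathscr F:=\mathscr E(\mathscr L)$ with $\mathfrak p=Ann(\mathscr L)$. Since $\mathscr L\subseteq\mathscr F$ is essential, Corollary \ref{C3.7} gives $Ass(\mathscr F)=Ass(\mathscr L)=\{\mathfrak p\}$, and Proposition \ref{P2.15cwi} gives $Supp(\mathscr F)=Supp(\mathscr L)=\mathbb V(\mathfrak p)$. If $\mathfrak p\not\supseteq I$, then no finitely generated nonzero $\mathscr N\subseteq\mathscr F$ can satisfy $Supp(\mathscr N)\subseteq\mathbb V(I)$: indeed $Supp(\mathscr N)\subseteq Supp(\mathscr F)=\mathbb V(\mathfrak p)$ and, by Theorem \ref{P3.3}, $\mathscr N$ contains an $R$-elementary subobject whose annihilator is an associated prime of $\mathscr F$, forcing $\mathfrak p\in Supp(\mathscr N)$; but $\mathfrak p\notin\mathbb V(I)$, so $Supp(\mathscr N)\not\subseteq\mathbb V(I)=\mathbb W(I,0)$. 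Hence $|\Gamma_I(\mathscr F)|$ contains only $0$ and $\Gamma_I(\mathscr F)=0$. If instead $\mathfrak p\supseteq I$, then $Supp(\mathscr F)=\mathbb V(\mathfrak p)\subseteq\mathbb V(I)$, so every finitely generated $\mathscr N\subseteq\mathscr F$ has $Supp(\mathscr N)\subseteq\mathbb V(I)$, and since $\mathscr F$ is the sum (in fact filtered colimit) of its finitely generated subobjects, $\Gamma_I(\mathscr F)=\mathscr F$.

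The remaining point, which is where I expect the main technical care, is the interchange of $\Gamma_I$ with the (possibly infinite) direct sum $\bigoplus_{i\in I_0}\mathscr E_i$. One cannot invoke left exactness of $\Gamma_I$ for this; instead I would argue directly at the level of the defining families. Any finitely generated subobject $\mathscr N\subseteq\bigoplus_{i\in I_0}\mathscr E_i$ is contained in a finite subsum $\bigoplus_{i\in F}\mathscr E_i$ (since finitely generated objects are small in the locally noetherian category $\mathscr S_R$, or directly because the inclusion factors through a finite stage of the filtered colimit of finite subsums). On that finite subsum, left exactness of $\Gamma_I$ does apply: $\Gamma_I(\bigoplus_{i\in F}\mathscr E_i)=\bigoplus_{i\in F}\Gamma_I(\mathscr E_i)$, which by the per-summand claim is $\bigoplus_{i\in F,\ \mathfrak p_i\supseteq I}\mathscr E_i$. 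Taking the union over all finite $F\subseteq I_0$ and using that $\Gamma_I(\mathscr E)$ is the sum of its finitely generated subobjects lying in $|\Gamma_I(\mathscr E)|$ (Lemma \ref{L4.1qu}(a)), we conclude $\Gamma_I(\mathscr E)=\bigoplus_{i\in I_0,\ \mathfrak p_i\supseteq I}\mathscr E_i$. This is a direct summand of $\mathscr E$ and a direct sum of injectives in the locally noetherian category $\mathscr S_R$, hence injective, completing the proof. A cleaner alternative that avoids the colimit bookkeeping would be to prove the more general statement that a hereditary torsion theory whose torsion class is closed under the relevant sums has $\Gamma$ preserving direct sums of injective hulls of elementary objects; but the argument above is self-contained given the earlier results.
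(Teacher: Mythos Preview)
Your proof is correct and takes a genuinely different route from the paper's. The paper verifies Baer's criterion directly: given a noetherian $\mathscr M$, a subobject $\mathscr N\subseteq\mathscr M$, and $\phi:\mathscr N\to\Gamma_I(\mathscr E)$, it first extends $\phi$ to some $\psi':\mathscr M\to\mathscr E$ using injectivity of $\mathscr E$, then invokes the abstract Artin--Rees lemma of Artin--Zhang to find a power $I^{t+c}$ with $\phi(I^{t+c}\mathscr M\cap\mathscr N)=0$, and finally extends the resulting map on $I^{t+c}\mathscr M+\mathscr N$ to all of $\mathscr M$, checking that the image lands in $\Gamma_I(\mathscr E)$. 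Your argument instead exploits the structure theorem for injectives (Theorem~\ref{T3.8bi}) together with the support theory of Section~2 to compute $\Gamma_I$ on each summand $\mathscr E(\mathscr L)$, and then shows $\Gamma_I$ commutes with the direct sum. In effect you establish directly, for $J=0$, the per-summand dichotomy that the paper only proves later (Lemmas~\ref{L5.4xl} and~\ref{L5.5hx}) as a \emph{consequence} of the present lemma via Lemma~\ref{L4.7x4}; your use of $Supp(\mathscr E(\mathscr L))=Supp(\mathscr L)=\mathbb V(\mathfrak p)$ from Proposition~\ref{P2.15cwi} neatly sidesteps that dependency. The paper's approach is more self-contained in that it avoids the decomposition theorem and proceeds by a single extension argument, while yours is more structural and avoids the Artin--Rees machinery entirely. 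One minor remark: where you say ``left exactness of $\Gamma_I$ does apply'' for finite direct sums, what you are really using is additivity of $\Gamma_I$ (so that it preserves biproducts), which is immediate since $\Gamma_I$ is a subfunctor of the identity; left exactness per se is not needed at that step.
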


\begin{proof}
We will show that for any finitely generated $\mathscr M\in \mathscr S_R$ and any subobject $\mathscr N\subseteq \mathscr M$, a morphism $\phi:\mathscr N\longrightarrow \Gamma_I(\mathscr E)$ extends to a morphism $\psi:\mathscr M\longrightarrow \Gamma_I(\mathscr E)$. Since $\mathscr S_R$ is locally noetherian, it has a set of finitely generated generators. Accordingly, it will follow from Baer's criterion \cite[Prop V.2.9]{Sten} that $\Gamma_I(\mathscr E)$ is injective. 

\smallskip
Since $\phi(\mathscr N)\subseteq \Gamma_I(\mathscr E) \subseteq \mathscr E$ is finitely generated, we can choose $t\geq 1$ such that $I^t\phi(\mathscr N)=0$.
Since $\mathscr E$ is injective, the morphism $\mathscr N\overset{\phi}{\longrightarrow}\Gamma_I(\mathscr E)\hookrightarrow \mathscr E$ extends to a morphism $\psi': \mathscr M\longrightarrow \mathscr E$. In particular, $\phi(\mathscr N)\subseteq \psi'(\mathscr M)$. Since $\mathscr M$ is finitely generated, so is $\psi'(\mathscr M)$. Applying the version of Artin-Rees lemma   proved in \cite[Proposition D1.8]{AZ}, we can find $c\geq 1$ such that for all $n\geq c$ we have
\begin{equation}\label{4.5qa}
I^n\psi'(\mathscr M)\cap \phi(\mathscr N)=I^{n-c}(I^c\psi'(\mathscr M)\cap \phi(\mathscr N))\subseteq I^{n-c}\phi(\mathscr N)
\end{equation} Putting $n=t+c$, it follows from \eqref{4.5qa} that $I^{t+c}\psi'(\mathscr M)\cap \phi(\mathscr N)\subseteq I^t\phi(\mathscr N)=0$. We now note that
\begin{equation}\label{4.55t}
\phi(I^{t+c}\mathscr M\cap \mathscr N)\subseteq \phi(\mathscr N)\qquad \phi(I^{t+c}\mathscr M\cap \mathscr N)\subseteq \psi'(I^{t+c}\mathscr M\cap \mathscr N)\subseteq \psi'(I^{t+c}\mathscr M)\subseteq I^{t+c}\psi'(\mathscr M)
\end{equation} Accordingly, it follows from \eqref{4.55t} that $\phi(I^{t+c}\mathscr M\cap \mathscr N)\subseteq I^{t+c}\psi'(\mathscr M)\cap \phi(\mathscr N)=0$.  
Now since
\begin{equation}
I^{t+c} \mathscr M\cap \mathscr N=Ker(I^{t+c}\mathscr M\oplus \mathscr N\longrightarrow \mathscr M) \qquad  I^{t+c}\mathscr M+\mathscr N=Im(I^{t+c}\mathscr M\oplus\mathscr N
\longrightarrow \mathscr M) 
\end{equation} the morphism $0\oplus \phi:  I^{t+c} \mathscr M\oplus \mathscr N\longrightarrow \Gamma_I(\mathscr E)$ induces $\phi': I^{t+c} \mathscr M+ \mathscr N\longrightarrow \Gamma_I(\mathscr E)$  such that $\phi'| I^{t+c}\mathscr M=0$ and $\phi'|\mathscr N=\phi$. Again since $\mathscr E$ is injective, there exists $\psi: \mathscr M\longrightarrow \mathscr E$ extending $ I^{t+c} \mathscr M+ \mathscr N\overset{\phi'}{\longrightarrow} \Gamma_I(\mathscr E)\hookrightarrow \mathscr E$. We now see that 
\begin{equation}
I^{t+c}\psi(\mathscr M)  =\psi(I^{t+c}\mathscr M)=\phi'(I^{t+c}\mathscr M)=0
\end{equation} whence it follows that $\psi(\mathscr M)\subseteq \Gamma_I(\mathscr E)$. This proves the result.
\end{proof}

The next step is to make use of the directed sets $\tilde{\mathbb W}(I,J)$ defined in \cite[$\S$ 3]{TYY} as follows: the elements of $\tilde{\mathbb W}(I,J)$ are ideals $K\subseteq R$ satisfying $I^n
\subseteq K+J$ for $n\gg 1$. For $K$, $K'\in \tilde{\mathbb W}(I,J)$, we will say that $K\leq K'$ if $K\supseteq K'$. We observe that each $\tilde{\mathbb W}(I,J)$  filtered. 

\begin{lem}\label{L4.9sl}
Let $\mathscr M\in \mathscr S_R$. Then, $\Gamma_{I,J}(\mathscr M)=\underset{K\in \tilde{\mathbb W}(I,J)}{\varinjlim}\textrm{ }\Gamma_{K}(\mathscr M)$.
\end{lem}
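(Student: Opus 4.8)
The plan is to view both sides as subobjects of $\mathscr M$ and to match the generating families of finitely generated subobjects. First I would check that $K\mapsto \Gamma_K(\mathscr M)=\Gamma_{K,0}(\mathscr M)$ defines a functor on the filtered poset $\tilde{\mathbb W}(I,J)$: if $K\leq K'$, i.e.\ $K\supseteq K'$, then for any $\mathscr N\in fg(\mathscr M)$ with $K^n\subseteq Ann(\mathscr N)$ (for $n\gg 1$) one has $(K')^n\subseteq K^n\subseteq Ann(\mathscr N)$, so $|\Gamma_K(\mathscr M)|\subseteq|\Gamma_{K'}(\mathscr M)|$ and hence $\Gamma_K(\mathscr M)\subseteq\Gamma_{K'}(\mathscr M)$ as subobjects of $\mathscr M$. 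The transition morphisms are just these inclusions, and they are visibly compatible with the inclusions $\Gamma_K(\mathscr M)\hookrightarrow\mathscr M$, so these maps form a cocone and induce a morphism $\varinjlim_{K}\Gamma_K(\mathscr M)\to\mathscr M$.

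Next I would use that $\mathscr S_R$ is a Grothendieck category, so that filtered colimits are exact; hence the colimit of the monomorphisms $\Gamma_K(\mathscr M)\hookrightarrow\mathscr M$ is again a monomorphism, and its image is the sum $\sum_{K\in\tilde{\mathbb W}(I,J)}\Gamma_K(\mathscr M)$ formed inside $\mathscr M$. It therefore remains to identify this sum with $\Gamma_{I,J}(\mathscr M)=\sum|\Gamma_{I,J}(\mathscr M)|$, for which it suffices to prove the equality of families $|\Gamma_{I,J}(\mathscr M)|=\bigcup_{K\in\tilde{\mathbb W}(I,J)}|\Gamma_K(\mathscr M)|$.

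For $\bigcup_K|\Gamma_K(\mathscr M)|\subseteq|\Gamma_{I,J}(\mathscr M)|$: given $\mathscr N\in|\Gamma_K(\mathscr M)|$, pick $m$ with $K^m\subseteq Ann(\mathscr N)$ and $\ell$ with $I^{\ell}\subseteq K+J$; since $(K+J)^m\subseteq K^m+J$ we get $I^{\ell m}\subseteq K^m+J\subseteq Ann(\mathscr N)+J$, so $\mathscr N\in|\Gamma_{I,J}(\mathscr M)|$. For the reverse inclusion, given $\mathscr N\in|\Gamma_{I,J}(\mathscr M)|$, set $K:=Ann(\mathscr N)$; then $I^n\subseteq Ann(\mathscr N)+J=K+J$ for $n\gg1$ shows $K\in\tilde{\mathbb W}(I,J)$, and $K^n\subseteq K=Ann(\mathscr N)$ trivially, so $\mathscr N\in|\Gamma_K(\mathscr M)|$. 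Taking sums over these families (which can be freely regrouped, being families of subobjects of the fixed object $\mathscr M$) gives $\Gamma_{I,J}(\mathscr M)=\sum_K\Gamma_K(\mathscr M)=\varinjlim_K\Gamma_K(\mathscr M)$.

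This argument has no serious obstacle; the two places requiring a little care are the elementary ideal computation $(K+J)^m\subseteq K^m+J$ (every monomial in the expansion other than $K^m$ carries a positive power of $J$), and the fact that a filtered colimit of subobjects of a fixed object, with inclusions as transition maps, is computed as their sum inside that object --- which is precisely where exactness of filtered colimits in the Grothendieck category $\mathscr S_R$ enters. I would also note in passing that $\tilde{\mathbb W}(I,J)$ being filtered has already been observed above, so no separate verification is needed.
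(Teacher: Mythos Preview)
Your argument is correct and is essentially the same as the paper's proof. Both identify the colimit with the union of the $\Gamma_K(\mathscr M)$ inside $\mathscr M$ and then match finitely generated subobjects via the same two ideal computations: $I^{\ell m}\subseteq (K+J)^m\subseteq K^m+J\subseteq Ann(\mathscr N)+J$ for one inclusion, and taking $K=Ann(\mathscr N)$ for the other. The only difference is that you spell out the functoriality and the identification of the filtered colimit with the internal sum, which the paper leaves implicit and instead phrases the conclusion as $fg(\Gamma_{I,J}(\mathscr M))=fg(\mathscr N)$ together with local noetherianity.
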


\begin{proof}
We set $\mathscr N:=\underset{K\in \tilde{\mathbb W}(I,J)}{\varinjlim}\textrm{ }\Gamma_{K}(\mathscr M)$ and consider some $\mathscr N'\in fg(\mathscr N)$.  Since $\tilde{\mathbb W}(I,J)$ is filtered, we choose $K\in \tilde{\mathbb W}(I,J)$ such that $\mathscr N'\subseteq \Gamma_{K}(\mathscr M)$, i.e., $K^m\subseteq Ann(\mathscr N')$ for some $m\geq 1$. Since $K\in \tilde{\mathbb W}(I,J)$, we may choose $n\geq 1$ such that $I^n\subseteq K+J$. Then, $I^{mn}\subseteq K^m+J\subseteq Ann(\mathscr N')+J$ and hence $\mathscr N'\subseteq \Gamma_{I,J}(\mathscr M)$.
Conversely, we consider some $\mathscr N''\in fg(\Gamma_{I,J}(\mathscr M))$. Then, we have $I^l\subseteq Ann(\mathscr N'')+J$ for some $l\geq 1$. Then, $Ann(\mathscr N'')\in 
\tilde{\mathbb W}(I,J)$ and we have $\mathscr N''\subseteq \Gamma_{Ann(\mathscr N'')}(\mathscr M)$. Accordingly, we have $fg(\Gamma_{I,J}(\mathscr M))=fg(\mathscr N)$ and hence
$\Gamma_{I,J}(\mathscr M)=\mathscr N$ because $\mathscr S_R$ is locally noetherian. 
\end{proof}

\begin{thm}\label{P4.10gst}
Let $\mathscr E\in \mathscr S_R$ be an injective object. Then, for any ideals $I,J\subseteq R$, $\Gamma_{I,J}(\mathscr E)$ is also an injective object in $\mathscr S_R$.
\end{thm}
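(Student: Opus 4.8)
The plan is to obtain Proposition~\ref{P4.10gst} by combining Lemma~\ref{L4.9sl} with Lemma~\ref{L4.85fq}, the only substantive ingredient being that a filtered colimit of injective objects is injective in the locally noetherian category $\mathscr S_R$.

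First I would apply Lemma~\ref{L4.9sl} to the injective object $\mathscr E$, writing
\[
\Gamma_{I,J}(\mathscr E)=\underset{K\in \tilde{\mathbb W}(I,J)}{\varinjlim}\ \Gamma_K(\mathscr E),
\]
which is a filtered colimit, since $\tilde{\mathbb W}(I,J)$ is filtered (as noted just before Lemma~\ref{L4.9sl}) and the transition morphisms are the inclusions $\Gamma_K(\mathscr E)\subseteq \Gamma_{K'}(\mathscr E)$ coming from $K\le K'$ in $\tilde{\mathbb W}(I,J)$. For each $K\in\tilde{\mathbb W}(I,J)$, the object $\Gamma_K(\mathscr E)=\Gamma_{K,0}(\mathscr E)$ is injective in $\mathscr S_R$ by Lemma~\ref{L4.85fq}, applied with the single ideal $K$. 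So it remains only to see that this filtered colimit of injectives is again injective.

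For that last step I would argue exactly as in the proof of Lemma~\ref{L4.85fq}, via Baer's criterion. Fix a set $\{\mathscr P_i\}$ of noetherian generators of $\mathscr S_R$, choose one such $\mathscr P$, and let $\mathscr N\subseteq\mathscr P$ be a subobject; being a subobject of a noetherian object, $\mathscr N$ is noetherian, hence finitely presented (a finite coproduct of noetherian generators mapping onto $\mathscr N$ is noetherian, so its kernel is finitely generated), and therefore the functor $\mathscr S_R(\mathscr N,\_\_)$ commutes with filtered colimits. Consequently any morphism $\phi:\mathscr N\longrightarrow \Gamma_{I,J}(\mathscr E)$ factors through $\Gamma_K(\mathscr E)$ for some $K$; since $\Gamma_K(\mathscr E)$ is injective, $\phi$ extends along the monomorphism $\mathscr N\hookrightarrow \mathscr P$ to a morphism $\mathscr P\longrightarrow\Gamma_K(\mathscr E)\longrightarrow \Gamma_{I,J}(\mathscr E)$. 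By Baer's criterion \cite[Prop V.2.9]{Sten}, $\Gamma_{I,J}(\mathscr E)$ is injective. (Alternatively one may simply invoke the standard fact that filtered colimits of injective objects are injective in a locally noetherian Grothendieck category.)

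I do not expect a genuine obstacle here: Lemmas~\ref{L4.85fq} and~\ref{L4.9sl} carry the weight, and the only place local noetherianity is really used is the compactness of noetherian generators, which is exactly what allows a morphism out of $\mathscr N$ to descend to a single stage $\Gamma_K(\mathscr E)$.
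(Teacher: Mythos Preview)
Your proof is correct and follows essentially the same route as the paper: both combine Lemma~\ref{L4.9sl} with Lemma~\ref{L4.85fq} and then verify Baer's criterion by factoring a morphism from a finitely generated (noetherian) object through a single term $\Gamma_{K_0}(\mathscr E)$ of the filtered colimit. The only cosmetic difference is that the paper tests against an arbitrary finitely generated $\mathscr M$ and subobject $\mathscr N\subseteq\mathscr M$, while you test against a noetherian generator $\mathscr P$; your extra remark about finite presentation is correct but not needed here, since the colimit in Lemma~\ref{L4.9sl} is a filtered union and finite generation of $\mathscr N$ already forces $\phi(\mathscr N)$ to land in some $\Gamma_{K_0}(\mathscr E)$.
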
 

\begin{proof}
Since $\mathscr E\in \mathscr S_R$ is injective, it follows by Lemma \ref{L4.85fq} that $\Gamma_K(\mathscr E)$ is injective for each $K\in \tilde{\mathbb W}(I,J)$. We now consider some finitely generated $\mathscr M\in \mathscr S_R$, a subobject $\mathscr N\subseteq \mathscr M$ and a morphism $\phi:\mathscr N\longrightarrow \Gamma_{I,J}(\mathscr E)$. By Lemma \ref{L4.9sl}, we know that $\Gamma_{I,J}(\mathscr E)=\underset{K\in \tilde{\mathbb W}(I,J)}{\varinjlim}\textrm{ }\Gamma_{K}(\mathscr E)$. Since $\tilde{\mathbb W}(I,J)$ is filtered and $\mathscr N\subseteq \mathscr M$ is finitely generated, we can find $K_0\in \tilde{\mathbb W}(I,J)$ such that $\phi$ factors through $\Gamma_{K_0}(\mathscr E)$. Since $\Gamma_{K_0}(\mathscr E)$ is injective, we now have $\psi':\mathscr M\longrightarrow \Gamma_{K_0}(\mathscr E)$ extending $\mathscr N\longrightarrow \Gamma_{K_0}(\mathscr E)$. Composing with the canonical map $\Gamma_{K_0}(\mathscr E)\longrightarrow \underset{K\in \tilde{\mathbb W}(I,J)}{\varinjlim}\textrm{ }\Gamma_{K}(\mathscr M)=\Gamma_{I,J}(\mathscr E)$, we obtain an extension of $\phi:\mathscr N\longrightarrow \Gamma_{I,J}(\mathscr E)$ to a morphism $\psi:\mathscr M\longrightarrow \Gamma_{I,J}(\mathscr E)$. Since $\mathscr S_R$ is a locally noetherian category, it now follows from Baer's criterion (see  \cite[Prop V.2.9]{Sten}) that $\Gamma_{I,J}(\mathscr E)$ is injective.
\end{proof}

\begin{lem}\label{L4.7x4}
Let $\mathscr M\in \mathscr S_R$ be an $(I,J)$-torsion object. Then, its injective hull $\mathscr E(\mathscr M)$ is also $(I,J)$-torsion.
\end{lem}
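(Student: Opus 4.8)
The plan is to exhibit the injective hull $\mathscr E(\mathscr M)$ as equal to its own $(I,J)$-torsion subobject, leveraging the fact, proved in Proposition \ref{P4.10gst}, that $\Gamma_{I,J}$ carries injectives to injectives, together with the essentiality of $\mathscr M$ in $\mathscr E(\mathscr M)$.

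First I would record that $\Gamma_{I,J}$ is a subfunctor of the identity: by Lemma \ref{L4.1qu}, a monomorphism $\mathscr M'\hookrightarrow\mathscr M$ induces $\Gamma_{I,J}(\mathscr M')\subseteq\Gamma_{I,J}(\mathscr M)$, and $\Gamma_{I,J}(\mathscr M)\subseteq\mathscr M$ always. Applying this to the canonical essential monomorphism $\mathscr M\hookrightarrow\mathscr E(\mathscr M)$ and using the hypothesis $\Gamma_{I,J}(\mathscr M)=\mathscr M$, I obtain the chain of subobjects $\mathscr M=\Gamma_{I,J}(\mathscr M)\subseteq\Gamma_{I,J}(\mathscr E(\mathscr M))\subseteq\mathscr E(\mathscr M)$.

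Next I would invoke Proposition \ref{P4.10gst} to conclude that $\Gamma_{I,J}(\mathscr E(\mathscr M))$ is injective in $\mathscr S_R$. The crucial observation is then that the inclusion $\Gamma_{I,J}(\mathscr E(\mathscr M))\hookrightarrow\mathscr E(\mathscr M)$ is \emph{essential}: any nonzero subobject $\mathscr G\subseteq\mathscr E(\mathscr M)$ satisfies $\mathscr M\cap\mathscr G\ne 0$ by essentiality of $\mathscr M$ in $\mathscr E(\mathscr M)$, and $\mathscr M\cap\mathscr G\subseteq\Gamma_{I,J}(\mathscr E(\mathscr M))\cap\mathscr G$ since $\mathscr M\subseteq\Gamma_{I,J}(\mathscr E(\mathscr M))$. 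An injective object admits no proper essential extension: explicitly, the identity of $\Gamma_{I,J}(\mathscr E(\mathscr M))$ extends along this inclusion to a retraction $r:\mathscr E(\mathscr M)\longrightarrow\Gamma_{I,J}(\mathscr E(\mathscr M))$, whose kernel meets $\Gamma_{I,J}(\mathscr E(\mathscr M))$ trivially and hence vanishes, so $r$ is a monomorphism as well as a split epimorphism, i.e., an isomorphism. Therefore $\Gamma_{I,J}(\mathscr E(\mathscr M))=\mathscr E(\mathscr M)$, which is precisely the assertion that $\mathscr E(\mathscr M)$ is $(I,J)$-torsion.

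I do not expect a genuine obstacle: the argument is entirely formal once Proposition \ref{P4.10gst} is available. The only point needing a word of care is the implication ``injective $+$ essential extension $\Rightarrow$ equality,'' which is standard in any Grothendieck category and is made self-contained by the retraction argument above; alternatively one may simply note that $\Gamma_{I,J}(\mathscr E(\mathscr M))$, being injective and an essential extension of $\mathscr M$ sitting inside $\mathscr E(\mathscr M)$, is itself an injective hull of $\mathscr M$ and hence coincides with $\mathscr E(\mathscr M)$.
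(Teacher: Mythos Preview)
Your proof is correct and follows essentially the same approach as the paper: both use Proposition \ref{P4.10gst} to see that $\Gamma_{I,J}(\mathscr E(\mathscr M))$ is injective, note that $\mathscr M\subseteq\Gamma_{I,J}(\mathscr E(\mathscr M))$, and then use essentiality of $\mathscr M$ to conclude. The paper phrases the last step via the direct sum complement $\mathscr E(\mathscr M)=\Gamma_{I,J}(\mathscr E(\mathscr M))\oplus\mathscr N$ with $\mathscr N\cap\mathscr M=0$, while you phrase it via the equivalent fact that an injective admits no proper essential extension; these are the same argument in slightly different clothing.
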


\begin{comment}
\begin{proof}
Since $\mathscr M=\Gamma_{I,J}(\mathscr M)$, it follows by Proposition \ref{P4.3ed} that $Ass(\mathscr M)\subseteq W(I,J)$. By Corollary \ref{C3.7}, we know that $Ass(\mathscr M)=Ass(E(\mathscr M))$. Accordingly, $Ass(E(\mathscr M))\subseteq W(I,J)$. We consider any $0\ne T\in fg(E(\mathscr M))$ and $P\in Supp(\mathscr G)\subseteq Supp(E(\mathscr M))$. By 
Proposition \ref{P3.11gq}, there exists a prime ideal $Q\subseteq P$ with $Q\in Ass(E(\mathscr M))$. Then, $Q\in W(I,J)$ and by the definition of $W(I,J)$ in \eqref{wij}, it is immediate that
$P\in W(I,J)$.  By  Lemma \ref{L4.2wo}, it follows that any $0\ne T\in fg(E(\mathscr M))$ lies in $\Gamma_{I,J}(E(\mathscr M))$ and the result follows.
\end{proof}
\end{comment}

\begin{proof} Since $\mathscr E(\mathscr M)$ is injective, it follows by  Proposition \ref{P4.10gst}  that $\Gamma_{I,J}(\mathscr E(\mathscr M))$ is also injective in $\mathscr S_R$. Accordingly, we have a direct sum decomposition $\mathscr E(\mathscr M)=\Gamma_{I,J}(\mathscr E(\mathscr M))\oplus \mathscr N$. We already know that $\mathscr M\in \mathscr S_R$ is an $(I,J)$-torsion object and it is clear from the definition in \eqref{4.1dn} that $\mathscr M$ lies inside the torsion part of $\mathscr E(\mathscr M)$. Then, $\mathscr M\cap \mathscr N=0$. But $\mathscr M$ is essential in $\mathscr E(\mathscr M)$, whence it follows that $\mathscr N=0$, i.e., $\mathscr E(\mathscr M)=\Gamma_{I,J}(\mathscr E(\mathscr M))$. 

\end{proof}

\begin{thm}\label{P4.9x4}
Let $\mathscr M\in \mathscr S_R$ be an $(I,J)$-torsion object. Then, $H^i_{I,J}(\mathscr M)=0$ for $i>0$.
\end{thm}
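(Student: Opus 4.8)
The plan is to use the standard reduction for torsion functors: a module is acyclic for the derived functor $\Gamma_{I,J}$ once one knows that every injective object in the torsion class stays injective, which is exactly Proposition \ref{P4.10gst}. More precisely, I would argue as follows. Take an injective resolution $0 \longrightarrow \mathscr M \longrightarrow \mathscr E^0 \longrightarrow \mathscr E^1 \longrightarrow \cdots$ in $\mathscr S_R$. Because $\mathscr M$ is $(I,J)$-torsion and $\Gamma_{I,J}$ is left exact (Lemma \ref{L4.1qu}(b)), we have $\Gamma_{I,J}(\mathscr M) = \mathscr M = \ker(\mathscr E^0 \to \mathscr E^1)$, and applying Lemma \ref{L4.7x4} we get $\mathscr E^0 = \mathscr E(\mathscr M)$ is $(I,J)$-torsion—or more carefully, I would not assume the resolution starts with the injective hull, so instead I would set up an induction.

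The cleanest route is induction on $i$ using dimension shifting. First I would observe that if $\mathscr M$ is $(I,J)$-torsion then its injective hull $\mathscr E(\mathscr M)$ is $(I,J)$-torsion by Lemma \ref{L4.7x4}, and the quotient $\mathscr E(\mathscr M)/\mathscr M$ is again $(I,J)$-torsion, since $\mathscr T(\mathscr S)_{I,J}$ is a torsion class (Proposition \ref{torth}) and hence closed under quotients. So from the short exact sequence $0 \to \mathscr M \to \mathscr E(\mathscr M) \to \mathscr C \to 0$ with all three terms $(I,J)$-torsion, the long exact sequence for $H^\bullet_{I,J}$ gives, for $i \geq 1$,
\begin{equation}
H^i_{I,J}(\mathscr E(\mathscr M)) \longrightarrow H^i_{I,J}(\mathscr C) \longrightarrow H^{i+1}_{I,J}(\mathscr M) \longrightarrow H^{i+1}_{I,J}(\mathscr E(\mathscr M)).
\end{equation}
Since $\mathscr E(\mathscr M)$ is injective, $H^j_{I,J}(\mathscr E(\mathscr M)) = 0$ for $j \geq 1$, so $H^{i+1}_{I,J}(\mathscr M) \cong H^i_{I,J}(\mathscr C)$ for $i \geq 1$, and for the base case $H^1_{I,J}(\mathscr M)$ we get it as the cokernel of $\Gamma_{I,J}(\mathscr E(\mathscr M)) \to \Gamma_{I,J}(\mathscr C)$, which is the cokernel of $\mathscr E(\mathscr M) \to \mathscr C$ (as both are $(I,J)$-torsion), hence zero by exactness of the original sequence. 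Then an induction on $i$, using that $\mathscr C$ is again $(I,J)$-torsion, finishes it: $H^{i+1}_{I,J}(\mathscr M) \cong H^i_{I,J}(\mathscr C) = 0$.

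The one point that needs care—and really the only substantive ingredient beyond bookkeeping—is knowing that injectives in the torsion class remain injective, i.e. Proposition \ref{P4.10gst}, together with the fact that $\mathscr E(\mathscr M)$ is $(I,J)$-torsion when $\mathscr M$ is, i.e. Lemma \ref{L4.7x4}; both are already established above, so the proof here is essentially formal. I would also double-check that the torsion class is closed under quotients so that $\mathscr C = \mathscr E(\mathscr M)/\mathscr M$ genuinely lies in $\mathscr T(\mathscr S)_{I,J}$—this is immediate from Lemma \ref{L4.1qu}(a), since a finitely generated subobject of a quotient of an $(I,J)$-torsion object is a quotient of a finitely generated subobject of the original, hence is annihilated appropriately. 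I do not anticipate any serious obstacle; the main care is simply to run the induction correctly and to justify the base case $H^1_{I,J}(\mathscr M) = 0$ separately.
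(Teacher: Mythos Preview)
Your proof is correct and uses the same essential ingredients as the paper's: Lemma \ref{L4.7x4} to see that $\mathscr E(\mathscr M)$ is $(I,J)$-torsion, and closure of the torsion class under quotients (Proposition \ref{torth}) to see that the cokernel stays torsion. The only difference is presentational: the paper builds in one stroke an entire injective resolution $\mathscr M\to\mathscr E^\bullet$ lying in $\mathscr T(\mathscr S)_{I,J}$ (iterating exactly your step $\mathscr M\hookrightarrow\mathscr E(\mathscr M)\twoheadrightarrow\mathscr C$) and then observes $\Gamma_{I,J}$ acts as the identity on it, whereas you unwind the same construction as a dimension-shift induction on $i$. One small remark: you flag Proposition \ref{P4.10gst} as a needed input, but in fact your argument (like the paper's) invokes it only indirectly, through Lemma \ref{L4.7x4}.
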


\begin{proof}
By Lemma \ref{L4.7x4}, we know that the injective hull of an $(I,J)$-torsion object is also $(I,J)$-torsion. By Proposition \ref{torth}, we know that the torsion class 
$\mathscr T(\mathscr S)_{I,J}$ is closed under both quotients and subobjects. Accordingly,   we can obtain a resolution $\mathscr M\longrightarrow \mathscr E^\bullet$ consisting of injective objects that are $(I,J)$-torsion. Therefore, applying the functor $\Gamma_{I,J}$ leaves this sequence unchanged. By definition, the cohomology of the resolution $\mathscr E^\bullet$ vanishes in positive degrees, and this proves the result.
\end{proof}

\begin{cor}\label{C410x4}
Let $\mathscr M\in \mathscr S_R$. Then, 

\smallskip
(a) $H^i_{I,J}(\mathscr M)\cong H^i_I(\mathscr M/\Gamma_{I,J}(\mathscr M)) $ for $i>0$.

\smallskip
(b)  $H^i_{I,J}(\mathscr M)$ is $(I,J)$-torsion for any $i\geq 0$.
\end{cor}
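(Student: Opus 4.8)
The plan is to dispose of (b) first and then derive (a) from a single short exact sequence. For (b), fix an injective resolution $\mathscr M\to\mathscr E^\bullet$ in $\mathscr S_R$. By Definition \ref{D4.4fv}, $H^i_{I,J}(\mathscr M)$ is the $i$-th cohomology of the complex $\Gamma_{I,J}(\mathscr E^\bullet)$, hence a subquotient of $\Gamma_{I,J}(\mathscr E^i)$. By Lemma \ref{L4.1qu}(a) every object of the shape $\Gamma_{I,J}(\mathscr N)$ lies in $\mathscr T(\mathscr S)_{I,J}$, and by Proposition \ref{torth} this torsion class is hereditary, hence closed under subobjects; being a torsion class it is also closed under quotients. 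Thus every subquotient of $\Gamma_{I,J}(\mathscr E^i)$ is again $(I,J)$-torsion, which proves (b). For $i=0$ nothing beyond $H^0_{I,J}(\mathscr M)=\Gamma_{I,J}(\mathscr M)$ is needed.

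For (a), I would run a d\'evissage along the canonical sequence $0\to\Gamma_{I,J}(\mathscr M)\to\mathscr M\to\mathscr M/\Gamma_{I,J}(\mathscr M)\to 0$. Since $\Gamma_{I,J}(\mathscr M)$ is $(I,J)$-torsion (Lemma \ref{L4.1qu}(a), which also gives the idempotence $\Gamma_{I,J}(\Gamma_{I,J}(\mathscr M))=\Gamma_{I,J}(\mathscr M)$), Proposition \ref{P4.9x4} gives $H^i_{I,J}(\Gamma_{I,J}(\mathscr M))=0$ for $i>0$, while $H^0_{I,J}(\Gamma_{I,J}(\mathscr M))=\Gamma_{I,J}(\mathscr M)$. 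Plugging this into the long exact sequence of $H^\bullet_{I,J}$ one obtains $H^i_{I,J}(\mathscr M)\cong H^i_{I,J}(\mathscr M/\Gamma_{I,J}(\mathscr M))$ for all $i>0$: for $i\geq 2$ both neighbouring terms vanish, and for $i=1$ one uses in addition that $H^0_{I,J}(\mathscr M/\Gamma_{I,J}(\mathscr M))=\Gamma_{I,J}(\mathscr M/\Gamma_{I,J}(\mathscr M))=0$, i.e.\ that $\mathscr M/\Gamma_{I,J}(\mathscr M)\in\mathscr F(\mathscr S)_{I,J}$, which is part of Proposition \ref{torth}.

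It then remains to replace $H^i_{I,J}$ by $H^i_I$ on the $(I,J)$-torsion-free quotient $\mathscr N:=\mathscr M/\Gamma_{I,J}(\mathscr M)$. My plan here is to use Proposition \ref{P4.3ed}: from $\Gamma_{I,J}(\mathscr N)=0$ we get $Ass(\mathscr N)\cap\mathbb W(I,J)=Ass(\Gamma_{I,J}(\mathscr N))=\phi$, hence also $Ass(\mathscr E(\mathscr N))\cap\mathbb W(I,J)=\phi$ by Corollary \ref{C3.7}. Decomposing $\mathscr E(\mathscr N)$ into injective hulls of $R$-elementary objects (Theorem \ref{T3.8bi}) and recalling that $\Gamma_{I,J}$, respectively $\Gamma_I$, kills the injective hull of an $R$-elementary object whose associated prime avoids $\mathbb W(I,J)$, respectively $\mathbb V(I)\subseteq\mathbb W(I,J)$, one would like to conclude that $\Gamma_I$ and $\Gamma_{I,J}$ induce the same right derived functors on $(I,J)$-torsion-free objects, and in particular agree on $\mathscr N$, giving $H^i_{I,J}(\mathscr N)\cong H^i_I(\mathscr N)$.

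I expect this last identification to be the genuine obstacle. The d\'evissage of the first two paragraphs is purely formal, but passing from $H^\bullet_{I,J}$ to $H^\bullet_I$ on the torsion-free quotient requires controlling which associated primes occur not merely in $\mathscr E(\mathscr N)$ but throughout an injective resolution of $\mathscr N$, and verifying that the inclusion of supports $\mathbb V(I)\subseteq\mathbb W(I,J)$ really suffices to make $\Gamma_I$ and $\Gamma_{I,J}$ act in the same way at every stage. This is exactly the point at which the structure theory of injectives from Section 2 (Theorem \ref{T3.8bi}, Corollary \ref{C3.7}) has to be used most carefully, and it is where I would double-check that the argument closes without an extra condition on the pair $(I,J)$.
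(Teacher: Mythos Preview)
Your proof of (b) and your d\'evissage for the first half of (a), yielding $H^i_{I,J}(\mathscr M)\cong H^i_{I,J}(\mathscr M/\Gamma_{I,J}(\mathscr M))$ for $i>0$, are exactly what the paper does. The paper's own argument for (a) simply applies $\Gamma_{I,J}$ to the short exact sequence $0\to\Gamma_{I,J}(\mathscr M)\to\mathscr M\to\mathscr M/\Gamma_{I,J}(\mathscr M)\to0$ and invokes Proposition~\ref{P4.9x4}; it never passes from $H^\bullet_{I,J}$ to $H^\bullet_I$.

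Your suspicion about the final step is well founded, but the resolution is not a missing argument: the ``$H^i_I$'' in the displayed statement of (a) is a typographical slip for ``$H^i_{I,J}$''. Two pieces of internal evidence confirm this. First, the paper's proof of (a) explicitly says it applies $\Gamma_{I,J}(\_\_)$ to the short exact sequence, so the long exact sequence it obtains is that of $H^\bullet_{I,J}$, not $H^\bullet_I$; and the sentence invoking Proposition~\ref{P4.9x4} (which concerns $H^i_{I,J}$) also writes $H^i_I$ where $H^i_{I,J}$ is meant. Second, when the corollary is quoted later in the proof of Proposition~\ref{T5.3}, it is cited in the form $H^j_{I,J}(\mathscr M)\cong H^j_{I,J}(\mathscr M/\Gamma_{I,J}(\mathscr M))$. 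So the identity you were worried about, $H^i_{I,J}(\mathscr N)\cong H^i_I(\mathscr N)$ for $(I,J)$-torsion-free $\mathscr N$, is neither claimed nor needed, and indeed fails in general since $\mathbb V(I)\subsetneq\mathbb W(I,J)$ as soon as $J\neq 0$. You may stop after the long exact sequence argument.
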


\begin{proof}
By Proposition \ref{P4.9x4}, we know that $H^i_I(\Gamma_{I,J}(\mathscr M))=0$ for $i>0$. The result of (a) now clear from the long exact sequence of cohomologies obtained by applying 
$\Gamma_{I,J}(\_\_)$ to the short exact sequence $0\longrightarrow \Gamma_{I,J}(\mathscr M)\longrightarrow \mathscr M\longrightarrow \mathscr M/\Gamma_{I,J}(\mathscr M))
\longrightarrow 0$. To prove (b), we note that $H^\bullet_{I,J}(\mathscr M)$ are by definition the homology objects of $\Gamma_{I,J}(\mathscr E^\bullet)$, where $\mathscr M\longrightarrow \mathscr E^\bullet$ is an injective resolution. Since each $\Gamma_{I,J}(\mathscr E^\bullet)$ lies in the hereditary torsion class $\mathscr T(\mathscr S)_{I,J}$, so does its subquotient
$H^\bullet_{I,J}(\mathscr M)$. 
\end{proof}

\section{Associated primes of local cohomology objects}

In this section, we study finiteness conditions on local cohomology objects and their associated primes. We give a condition for the set of associated primes of the local cohomology object $H^\bullet_{I,J}(\_\_)$ with respect to an ideal pair $(I,J)$ in an abstract module category  to be finite.
For $\mathscr M\in \mathscr S_R$, we say that $a\in R$ is a non-zerodivisor on $\mathscr M$ if $a_{\mathscr M}:\mathscr M\longrightarrow \mathscr M$ is a monomorphism.

\begin{lem}
\label{L5.1} Let $a\in R$ be a zero divisor on $\mathscr M\in \mathscr S_R$. Then, $a$ lies in some associated prime of $\mathscr M$.
\end{lem}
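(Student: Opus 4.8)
The plan is to mimic the classical commutative-algebra proof that a zerodivisor lies in an associated prime, adapted to the abstract module category $\mathscr S_R$. Suppose $a\in R$ is a zerodivisor on $\mathscr M$, so that $a_{\mathscr M}:\mathscr M\longrightarrow\mathscr M$ is not a monomorphism; equivalently $\mathscr K:=Ker(a_{\mathscr M})\ne 0$. The key point is that $a$ annihilates $\mathscr K$: since $a_{\mathscr K}$ factors through the inclusion $\mathscr K\hookrightarrow\mathscr M$ followed by $a_{\mathscr M}$, and the composite $\mathscr K\hookrightarrow\mathscr M\xrightarrow{a_{\mathscr M}}\mathscr M$ is zero by definition of the kernel, we get $a_{\mathscr K}=0$, i.e. $a\in Ann(\mathscr K)$.

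Next I would invoke Proposition \ref{P3.3}: since $\mathscr K\ne 0$, it has an associated prime, so there is an $R$-elementary subobject $\mathscr L\subseteq\mathscr K$ with $\mathfrak p:=Ann(\mathscr L)$ prime, and $\mathfrak p\in Ass(\mathscr K)$. Because $\mathscr L\subseteq\mathscr K$, we have $Ann(\mathscr K)\subseteq Ann(\mathscr L)=\mathfrak p$, hence $a\in Ann(\mathscr K)\subseteq\mathfrak p$. Finally, $\mathscr L\subseteq\mathscr K\subseteq\mathscr M$, so the same $R$-elementary object witnesses that $\mathfrak p\in Ass(\mathscr M)$ (alternatively, apply Proposition \ref{P3.4} to $0\to\mathscr K\to\mathscr M$ to get $Ass(\mathscr K)\subseteq Ass(\mathscr M)$). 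Thus $a$ lies in the associated prime $\mathfrak p$ of $\mathscr M$, as required.

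I do not anticipate a serious obstacle here; the statement is a direct consequence of the existence of associated primes (Proposition \ref{P3.3}) together with the behaviour of annihilators under passage to subobjects. The only mildly technical point is the verification that $a_{\mathscr K}=0$, which requires knowing that the $R$-action on the subobject $\mathscr K$ is the restriction of the $R$-action on $\mathscr M$ — that is, that $\mathscr K\hookrightarrow\mathscr M$ is a morphism in $\mathscr S_R$ and the formation of $r_{(\_)}$ is natural. This is immediate from the definition of kernels in $\mathscr S_R$ and the fact that $a_{\mathscr M}$ is a morphism of $R$-module objects, exactly as used in the proof of Lemma \ref{L3.2}.
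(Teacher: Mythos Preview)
Your proposal is correct and follows essentially the same route as the paper: set $\mathscr K=Ker(a_{\mathscr M})\ne 0$, invoke Proposition~\ref{P3.3} to obtain an $R$-elementary $\mathscr L\subseteq\mathscr K$ with $\mathfrak p=Ann(\mathscr L)\in Ass(\mathscr K)\subseteq Ass(\mathscr M)$, and conclude $a\in Ann(\mathscr K)\subseteq Ann(\mathscr L)=\mathfrak p$. The paper's proof is simply a more compressed version of what you wrote.
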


\begin{proof}
By definition, we know that $Ker(a_{\mathscr M})\ne 0$. Using Proposition \ref{P3.3},  we pick a prime ideal $\mathfrak p \in Ass(Ker(a_{\mathscr M}))\subseteq Ass(\mathscr M)$. Then, $\mathfrak p=Ann(\mathscr L)$ for some $R$-elementary object $\mathscr L\subseteq Ker(a_{\mathscr M})\subseteq \mathscr M$. Then, $a\in Ann(Ker(a_{\mathscr M}))\subseteq Ann(\mathscr 
L)=\mathfrak p$. 
\end{proof}

\begin{lem}
\label{L5.2} Let $I,J\subseteq R$ be ideals and let $\mathscr M\in \mathscr S_R$ be a finitely generated object such that $\Gamma_{I,J}(\mathscr M)=0$. Then, $I$ contains a non-zero divisor on $\mathscr M$.
\end{lem}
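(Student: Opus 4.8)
The plan is to argue by contraposition: assuming that \emph{every} element of $I$ is a zerodivisor on $\mathscr M$, I will exhibit a prime ideal lying in $Ass(\Gamma_{I,J}(\mathscr M))$, which contradicts the hypothesis $\Gamma_{I,J}(\mathscr M)=0$. We may clearly assume $\mathscr M\neq 0$, since for $\mathscr M=0$ every element of $R$ is a non-zerodivisor and there is nothing to prove.

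First I would apply Lemma \ref{L5.1}: each element of $I$, being a zerodivisor on $\mathscr M$, lies in some associated prime of $\mathscr M$, so that $I\subseteq\bigcup_{\mathfrak p\in Ass(\mathscr M)}\mathfrak p$. Since $\mathscr M$ is finitely generated, $Ass(\mathscr M)$ is a finite set by Corollary \ref{C3.6}. Hence the classical prime avoidance lemma over the commutative ring $R$ applies verbatim and yields a single $\mathfrak p\in Ass(\mathscr M)$ with $I\subseteq\mathfrak p$.

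Next, from $I\subseteq\mathfrak p$ we get $I^n\subseteq\mathfrak p\subseteq\mathfrak p+J$ for all $n\geq 1$, so $\mathfrak p\in\mathbb W(I,J)$ by the definition \eqref{wij}; note the $J$-component plays no role here. Therefore $\mathfrak p\in Ass(\mathscr M)\cap\mathbb W(I,J)$, which by Proposition \ref{P4.3ed} equals $Ass(\Gamma_{I,J}(\mathscr M))$. But $\Gamma_{I,J}(\mathscr M)=0$, and the zero object has no associated primes, since an $R$-elementary object is nonzero by Definition \ref{D3.1}; thus $Ass(\Gamma_{I,J}(\mathscr M))=\emptyset$, a contradiction. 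Consequently $I$ must contain some $a$ with $a_{\mathscr M}$ a monomorphism, i.e.\ a non-zerodivisor on $\mathscr M$.

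I do not anticipate any real obstacle: the statement is essentially a formal consequence of Lemma \ref{L5.1}, the finiteness of $Ass(\mathscr M)$ from Corollary \ref{C3.6}, and the identification of $Ass(\Gamma_{I,J}(\mathscr M))$ in Proposition \ref{P4.3ed}. The only points deserving a moment's care are that prime avoidance is a purely ring-theoretic fact about $R$ (and so is unaffected by the ambient category $\mathscr S_R$), and that the inclusion $I\subseteq\mathfrak p$ already forces $\mathfrak p\in\mathbb W(I,J)$ independently of $J$, so no extra hypothesis on $J$ is needed.
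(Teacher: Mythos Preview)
Your proof is correct and follows essentially the same approach as the paper's own proof: both argue by contradiction via Lemma \ref{L5.1}, Corollary \ref{C3.6}, and prime avoidance to produce $\mathfrak p\in Ass(\mathscr M)$ with $I\subseteq\mathfrak p$. The only cosmetic difference is that you invoke Proposition \ref{P4.3ed} to conclude $\mathfrak p\in Ass(\Gamma_{I,J}(\mathscr M))$, whereas the paper unpacks that step directly by exhibiting the $R$-elementary subobject $\mathscr L\subseteq\Gamma_{I,J}(\mathscr M)$; the content is the same.
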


\begin{proof}Suppose that all elements of $I\subseteq R$ are zero divisors on $\mathscr M$. By Lemma \ref{L5.1}, $I\subseteq \underset{\mathfrak p\in Ass(\mathscr M)}{\bigcup}
\textrm{ }\mathfrak p$.  Since $\mathscr M$ is finitely generated, $Ass(\mathscr M)$ is finite by Corollary \ref{C3.6}. By prime avoidance, there is some $\mathfrak p_0\in Ass(\mathscr M)$ such that
$I\subseteq \mathfrak p_0$. Hence, $\mathfrak p_0\in \mathbb W(I,J)$. 

\smallskip Now since $\mathfrak p_0$ is an associated prime of $\mathscr M$, there is an $R$-elementary object $0\ne \mathscr L\subseteq \mathscr M$ such that $\mathfrak p_0=Ann(\mathscr L)$.  Then, $Supp(\mathscr L)=\mathbb V(\mathfrak p_0)\subseteq \mathbb W(I,J)$ and hence $\mathscr L\subseteq \Gamma_{I,J}(\mathscr M)=0$, which is a contradiction.
\end{proof}

For any $\mathscr M$ in $\mathscr S_R$ and subobjects 
$\mathscr M'$, $\mathscr M''\subseteq \mathscr M$, we set
\begin{equation}
\label{5.1ew}
(\mathscr M'':\mathscr M'):=\{\mbox{$a\in R$ $\vert$ $Im(a_{\mathscr M'})\subseteq \mathscr M''$}\}
\end{equation} It is clear that any such $(\mathscr M'':\mathscr M')$ is an ideal in $R$.

\begin{lem}\label{L5.29}
Let $\mathscr N\subseteq \mathscr M$ in $\mathscr S_R$ and let $\mathscr K\subseteq \mathscr M/\mathscr N$ be a finitely generated subobject. Then, there exists a finitely generated
subobject $\mathscr K'\subseteq\mathscr M$ such that 

\smallskip
(1) The quotient map 
$\pi:\mathscr M\twoheadrightarrow \mathscr M/\mathscr N$ restricts to an epimorphism $\pi':\mathscr K'\twoheadrightarrow \mathscr K$.

\smallskip
(2) $Ann(\mathscr K)=(\mathscr N:\mathscr K')$.
\end{lem}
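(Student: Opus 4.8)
The plan is to construct $\mathscr K'$ by lifting a finite generating set of $\mathscr K$ along the quotient map, and then to enlarge it just enough to realize the colon ideal. First I would recall that since $\mathscr S_R$ is locally noetherian, it has a set of finitely generated generators $\{\mathscr P_i\}_{i\in I}$, and any finitely generated object is a quotient of a finite direct sum of such generators. So pick an epimorphism $q:\mathscr P:=\bigoplus_{l=1}^m\mathscr P_{i_l}\twoheadrightarrow\mathscr K$. Since each $\mathscr P_{i_l}$ is (finitely generated, hence) ``projective enough'' only in the Baer sense, I cannot lift $q$ directly; instead I would use that $\pi:\mathscr M\twoheadrightarrow\mathscr M/\mathscr N$ is an epimorphism and that the composite $\mathscr P\xrightarrow{q}\mathscr K\hookrightarrow\mathscr M/\mathscr N$ lifts along $\pi$ provided the $\mathscr P_{i_l}$ are projective — which in general they are not. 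The correct move, which avoids projectivity altogether, is to take the pullback: form $\mathscr K''\subseteq\mathscr M$ defined by the pullback of $\mathscr K\hookrightarrow\mathscr M/\mathscr N\xleftarrow{\pi}\mathscr M$, so that $\mathscr K''$ is the preimage $\pi^{-1}(\mathscr K)$ and $\pi$ restricts to an epimorphism $\mathscr K''\twoheadrightarrow\mathscr K$ with kernel $\mathscr N$. Then $\mathscr K''$ need not be finitely generated (since $\mathscr N$ may not be), but $\mathscr K''$ is the directed union of its finitely generated subobjects $\mathscr G$, and since $\mathscr K$ is finitely generated, some such $\mathscr G$ already surjects onto $\mathscr K$. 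This gives a finitely generated $\mathscr K_0\subseteq\mathscr M$ with $\pi(\mathscr K_0)=\mathscr K$, which is condition (1).

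Next I would address condition (2). Observe that for any $\mathscr K_0$ with $\pi(\mathscr K_0)=\mathscr K$ one always has the inclusion $(\mathscr N:\mathscr K_0)\subseteq Ann(\mathscr K)$: indeed if $a\in R$ with $Im(a_{\mathscr K_0})\subseteq\mathscr N$, then applying $\pi$ gives $Im(a_{\mathscr K})=\pi(Im(a_{\mathscr K_0}))\subseteq\pi(\mathscr N)=0$, using that $\pi$ is a morphism of $R$-module objects commuting with the endomorphisms $a_{(-)}$ and is an epimorphism onto $\mathscr K$. The reverse inclusion is where enlargement is needed: if $a\in Ann(\mathscr K)$, then $Im(a_{\mathscr K_0})\subseteq\pi^{-1}(0)\cap\,\text{(image)}$, more precisely $\pi(Im(a_{\mathscr K_0}))=Im(a_{\mathscr K})=0$, so $Im(a_{\mathscr K_0})\subseteq\mathscr N$ — wait, this already shows $a\in(\mathscr N:\mathscr K_0)$! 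So in fact $(\mathscr N:\mathscr K_0)=Ann(\mathscr K)$ holds automatically, provided $Im(a_{\mathscr K_0})\subseteq\mathscr K_0$ is contained in $\pi^{-1}(0)=\mathscr N$, which follows from $Im(a_{\mathscr K_0})\subseteq\mathscr K_0\subseteq\mathscr M$ together with $\pi(Im(a_{\mathscr K_0}))=0$ and $\mathscr N=\ker\pi$. Thus once $\mathscr K_0$ is chosen inside $\mathscr M$ (not merely surjecting onto $\mathscr K$ abstractly) with $\pi(\mathscr K_0)=\mathscr K$, both conditions hold with $\mathscr K':=\mathscr K_0$, and no further enlargement is needed.

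So the structure of the argument is: (i) form the preimage $\pi^{-1}(\mathscr K)\subseteq\mathscr M$; (ii) write it as a filtered union of its finitely generated subobjects and use finite generation of $\mathscr K$ plus the fact that epimorphisms onto a finitely generated object factor through a finitely generated subobject of the source (a standard consequence of local noetherianity, or more basically of $\mathscr K$ being a finitely generated — hence compact-type — object) to extract a finitely generated $\mathscr K'$ with $\pi(\mathscr K')=\mathscr K$; (iii) verify (2) by the two inclusions above, both of which are formal manipulations with $Im(a_{(-)})$ and the fact that $\pi$ is an epimorphism of $R$-module objects with kernel $\mathscr N$. The main obstacle — really the only subtle point — is step (ii): one must be careful that ``$\mathscr K$ finitely generated'' genuinely forces a finitely generated subobject of $\pi^{-1}(\mathscr K)$ to already surject onto it. I would justify this by choosing finitely many generators of $\mathscr K$, lifting each to an element of $\pi^{-1}(\mathscr K)$ — more precisely lifting along the epimorphism $\pi^{-1}(\mathscr K)\twoheadrightarrow\mathscr K$ a morphism from a finitely generated generator-object realizing the generators of $\mathscr K$ — which is possible because $\pi$ restricted to $\pi^{-1}(\mathscr K)$ is an epimorphism and we may pull back along it; the image of this lift is the desired finitely generated $\mathscr K'$. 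Everything else is bookkeeping with images and annihilators that the reader can fill in.
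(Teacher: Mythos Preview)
Your proposal is correct and follows essentially the same approach as the paper: form the preimage $\pi^{-1}(\mathscr K)$, use finite generation of $\mathscr K$ together with the filtered system of finitely generated subobjects to extract a finitely generated $\mathscr K'$ surjecting onto $\mathscr K$, and then verify $Ann(\mathscr K)=(\mathscr N:\mathscr K')$ by the two formal inclusions. The initial detour through projective generators is unnecessary, but once you pivot to the pullback your argument matches the paper's almost line for line.
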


\begin{proof}
We put $\mathscr K'':=\pi^{-1}(\mathscr K)\subseteq \mathscr M$. Since finitely generated subobjects of $\mathscr K''$ form a filtered system and $\pi(\mathscr K'')=\mathscr K$, we can find
$\mathscr K'\in fg(\mathscr K'')$ such that $\pi(\mathscr K')=\mathscr K$. This proves (1). To prove (2), for any $a\in R$, we  consider the commutative diagram
\begin{equation}\label{52cd}
\begin{CD}
\mathscr K'@>a_{\mathscr K'}>> \mathscr K' @>\iota'>> \mathscr M\\
@V\pi' VV @V\pi' VV @VV\pi V\\
 \mathscr K@>a_{\mathscr K}>> \mathscr K@>\iota>> \mathscr M/\mathscr N\\
 \end{CD}
\end{equation} If $a\in Ann(\mathscr K)$, i.e., $a_{\mathscr K}=0$, we get $\pi\circ\iota'\circ a_{\mathscr K'}=\iota\circ a_{\mathscr K}\circ \pi'=0$, which gives $Im(a_{\mathscr K'})\subseteq Ker(\pi)=\mathscr N$, i.e., $a\in (\mathscr N:\mathscr K')$. Conversely, if $a\in (\mathscr N:\mathscr K')$, we get $\iota\circ a_{\mathscr K}\circ \pi'=0$. Since $\pi'$ is an epimorphism and $\iota$ is a monomorphism, we get $a_{\mathscr K}=0$ or $a\in Ann(\mathscr K)$.  
\end{proof}

\begin{thm}\label{T5.3}
Let $\mathscr M\in \mathscr S_R$ be finitely generated and let $I,J\subseteq R$ be  ideals. Suppose that $i\geq 0$ is such that $H^j_{I,J}(\mathscr M)$ is finitely generated for all $j<i$. Then, for any finitely generated $\mathscr N\subseteq H^i_{I,J}(\mathscr M)$ such that $\mathscr N\supseteq JH^i_{I,J}(\mathscr M)$, the collection $Ass(H^i_{I,J}(\mathscr M)/\mathscr N)$ is finite.
\end{thm}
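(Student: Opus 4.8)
The plan is to induct on $i$, in the style of Brodmann--Faghani. The case $i=0$ is immediate: $H^0_{I,J}(\mathscr M)=\Gamma_{I,J}(\mathscr M)$ is a subobject of the noetherian object $\mathscr M$, hence noetherian, so $H^0_{I,J}(\mathscr M)/\mathscr N$ is finitely generated and has finite $Ass$ by Corollary~\ref{C3.6}. So let $i\geq 1$ and assume the statement at degree $i-1$ for every pair of ideals. First I would reduce to $\Gamma_{I,J}(\mathscr M)=0$: since $\Gamma_{I,J}(\mathscr M)$ is $(I,J)$-torsion, Proposition~\ref{P4.9x4} kills its higher local cohomology, so the long exact sequence of $0\to\Gamma_{I,J}(\mathscr M)\to\mathscr M\to\overline{\mathscr M}\to 0$, with $\overline{\mathscr M}:=\mathscr M/\Gamma_{I,J}(\mathscr M)$, yields $R$-linear isomorphisms $H^j_{I,J}(\mathscr M)\cong H^j_{I,J}(\overline{\mathscr M})$ for $j\geq 1$ and $\Gamma_{I,J}(\overline{\mathscr M})=0$; the hypotheses and the condition $\mathscr N\supseteq JH^i_{I,J}(\mathscr M)$ all transport along these, so we may assume $\Gamma_{I,J}(\mathscr M)=0$.

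By Lemma~\ref{L5.2}, $I$ then contains an element $a$ which is a non-zerodivisor on $\mathscr M$. Put $\mathscr M':=\mathscr M/a\mathscr M$, a finitely generated object, and apply $H^\bullet_{I,J}$ to $0\to\mathscr M\xrightarrow{a_{\mathscr M}}\mathscr M\to\mathscr M'\to 0$. From the long exact sequence, for $j<i-1$ the object $H^j_{I,J}(\mathscr M')$ is squeezed between a quotient of the finitely generated $H^j_{I,J}(\mathscr M)$ and a subobject of the finitely generated $H^{j+1}_{I,J}(\mathscr M)$, hence is itself finitely generated; and in degree $i-1$ the sequence identifies $K:=\ker\big(a_{H^i_{I,J}(\mathscr M)}\big)$ with $H^{i-1}_{I,J}(\mathscr M')/Im(\beta)$, where $\beta\colon H^{i-1}_{I,J}(\mathscr M)\to H^{i-1}_{I,J}(\mathscr M')$ is the connecting map, so that $Im(\beta)$, a quotient of the finitely generated $H^{i-1}_{I,J}(\mathscr M)$, is finitely generated.

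Write $(\mathscr N:a^m)\subseteq H^i_{I,J}(\mathscr M)$ for the preimage of $\mathscr N$ under $(a^m)_{H^i_{I,J}(\mathscr M)}$. As $H^i_{I,J}(\mathscr M)$ is $(I,J)$-torsion (Corollary~\ref{C410x4}(b)) and $\mathscr S_R$ is locally noetherian, every finitely generated $\mathscr Y\subseteq H^i_{I,J}(\mathscr M)$ satisfies $I^n\mathscr Y\subseteq J\mathscr Y$ for $n\gg 1$ (Lemma~\ref{L4.1qu}), hence $a^n\mathscr Y\subseteq J\mathscr Y\subseteq JH^i_{I,J}(\mathscr M)\subseteq\mathscr N$; so $H^i_{I,J}(\mathscr M)=\bigcup_{m\geq 0}(\mathscr N:a^m)$. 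Since $Ass$ is monotone for subobjects and $R$-elementary objects are finitely generated, $Ass$ commutes with this filtered union, so $Ass(H^i_{I,J}(\mathscr M)/\mathscr N)=\bigcup_m Ass\big((\mathscr N:a^m)/\mathscr N\big)$; filtering each term by $\mathscr N\subseteq(\mathscr N:a)\subseteq\dots\subseteq(\mathscr N:a^m)$ and using Proposition~\ref{P3.4}, together with the fact that multiplication by $a$ embeds $(\mathscr N:a^k)/(\mathscr N:a^{k-1})$ into $(\mathscr N:a^{k-1})/(\mathscr N:a^{k-2})$ and hence, iterating, into $(\mathscr N:a)/\mathscr N$, one gets $Ass(H^i_{I,J}(\mathscr M)/\mathscr N)\subseteq Ass\big((\mathscr N:a)/\mathscr N\big)$. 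It remains to check this set is finite. Multiplication by $a$ carries $(\mathscr N:a)$ onto a subobject of the finitely generated $\mathscr N$ with kernel $K$, so $(\mathscr N:a)/K$ is finitely generated; in the exact sequence $0\to(K+\mathscr N)/\mathscr N\to(\mathscr N:a)/\mathscr N\to(\mathscr N:a)/(K+\mathscr N)\to 0$ the last term is a quotient of $(\mathscr N:a)/K$, hence finitely generated with finite $Ass$ (Corollary~\ref{C3.6}), while the first is $K/(K\cap\mathscr N)$. Let $\mathscr D\subseteq H^{i-1}_{I,J}(\mathscr M')$ be the preimage of $K\cap\mathscr N$ under the quotient map $q\colon H^{i-1}_{I,J}(\mathscr M')\twoheadrightarrow K$. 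Then $\mathscr D$ is an extension of the finitely generated $K\cap\mathscr N$ (a subobject of $\mathscr N$) by $\ker q=Im(\beta)$, hence finitely generated, and $\mathscr D\supseteq JH^{i-1}_{I,J}(\mathscr M')$, because $q\big(JH^{i-1}_{I,J}(\mathscr M')\big)=JK$ and $JK\subseteq JH^i_{I,J}(\mathscr M)\subseteq\mathscr N$ give $JK\subseteq K\cap\mathscr N$. So the inductive hypothesis at degree $i-1$, applied to $\mathscr M'$ and $\mathscr D$, shows $Ass\big(K/(K\cap\mathscr N)\big)=Ass\big(H^{i-1}_{I,J}(\mathscr M')/\mathscr D\big)$ is finite, completing the induction.

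The step I expect to cost the most care is the last one: to make the induction close, the subobject $\mathscr D$ we feed to the inductive hypothesis must be simultaneously finitely generated and a container of $JH^{i-1}_{I,J}(\mathscr M')$. The second property is exactly where the hypothesis $\mathscr N\supseteq JH^i_{I,J}(\mathscr M)$ is used, while the first is automatic because the ``new'' piece $K\cap\mathscr N$ already sits inside the finitely generated $\mathscr N$; together these let the Brodmann--Faghani induction run in the two-ideal setting. The remaining points — the reduction to $\Gamma_{I,J}(\mathscr M)=0$ (so that Lemma~\ref{L5.2} is available), the identity $H^i_{I,J}(\mathscr M)=\bigcup_m(\mathscr N:a^m)$, and the behaviour of $Ass$ under filtered unions — are routine but rely throughout on local noetherianity of $\mathscr S_R$ and on Proposition~\ref{P3.4} and Corollary~\ref{C3.6}.
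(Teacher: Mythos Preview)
Your proof is correct and follows the Brodmann--Faghani strategy, as does the paper, but the execution diverges from the paper's in an interesting way. The paper picks a specific power $a^t$ (chosen so that $a^t\mathscr N\subseteq J\mathscr N$), works with the map $\bar{a}^t\colon H^i_{I,J}(\mathscr M)/\mathscr N\to H^i_{I,J}(\mathscr M)/J\mathscr N$, and analyzes its kernel via a short exact sequence; the endgame then requires a separate lemma on colon ideals (Lemma~\ref{L5.29}) and a direct chase of $R$-elementary objects to show that any associated prime not already captured by the kernel lies in $Ass(\pi'(\mathscr N))$. Your route instead exhausts $H^i_{I,J}(\mathscr M)$ by the colon-filtration $(\mathscr N:a^m)$, observes that the successive quotients all embed into $(\mathscr N:a)/\mathscr N$ via multiplication by $a$, and thereby reduces everything to that single layer; the induction then closes cleanly through the preimage $\mathscr D$ without any ad hoc prime-chasing. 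Your argument is more self-contained (it avoids Lemma~\ref{L5.29} entirely) and makes the role of the hypothesis $\mathscr N\supseteq JH^i_{I,J}(\mathscr M)$ more transparent: it is used exactly once, to force $JK\subseteq K\cap\mathscr N$. The paper's version, on the other hand, localizes the use of $(I,J)$-torsion to the single object $\mathscr N$ (via the fixed exponent $t$) rather than invoking the full exhaustion $H^i_{I,J}(\mathscr M)=\bigcup_m(\mathscr N:a^m)$. Both are valid; yours is arguably the cleaner packaging.
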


\begin{proof} 
For $i=0$, we know that $H^0_{I,J}(\mathscr M)=\Gamma_{I,J}(\mathscr M)\subseteq \mathscr M$ is finitely generated and the result is clear. We will proceed by induction on $i$. By Corollary \ref{C410x4}, we know that $H^j_{I,J}(\mathscr M)\cong H^j_{I,J}(\mathscr M/\Gamma_{I,J}(\mathscr M)) $ for $j>0$. Also, we know that $\Gamma_{I,J}(\mathscr M/\Gamma_{I,J}(\mathscr M))=0$. Accordingly, we may suppose that $\mathscr M\in \mathscr S_R$ is finitely generated with $\Gamma_{I,J}(\mathscr M)=0$  and it follows by Lemma \ref{L5.2} that we can find $a\in I$ which is a non-zero divisor on $\mathscr M$. 

\smallskip
By Corollary \ref{C410x4}, we know that $H^i_{I,J}(\mathscr M)$ is $(I,J)$-torsion, i.e., $\Gamma_{I,J}(H^i_{I,J}(\mathscr M))=H^i_{I,J}(\mathscr M)$. Since $\mathscr N\subseteq H^i_{I,J}(\mathscr M)$ is finitely generated, it follows by Lemma \ref{L4.1qu} that $I^t\subseteq Ann(\mathscr N)+J$ for some $t>0$. Accordingly, we have $a^t\mathscr N\subseteq J\mathscr N$. We now consider the short exact sequence
\begin{equation}\label{ses5.2}
0\longrightarrow \mathscr M \overset{a^t}{\longrightarrow}\mathscr M\longrightarrow\mathscr M/a^t\mathscr M\longrightarrow 0
\end{equation} Applying $\Gamma_{I,J}(\_\_)$ to \eqref{ses5.2}, the long exact sequence of derived functors gives us 
\begin{equation}\label{les5.3}
H^{l}_{I,J}(\mathscr M)\overset{a^t}{\longrightarrow} H^{l}_{I,J}(\mathscr M)\longrightarrow H^l_{I,J}(\mathscr M/a^t\mathscr M)\longrightarrow H^{l+1}_{I,J}(\mathscr M)\overset{a^t}{\longrightarrow}H^{l+1}_{I,J}(\mathscr M)
\end{equation} for every $l\geq 0$. Since $\mathscr S_R$ is locally noetherian, the collection of finitely generated objects in $\mathscr S_R$ is closed under extensions, subobjects and quotients (see \cite[V.4.2]{Sten}). Since $H^j_{I,J}(\mathscr M)$ is finitely generated for all $j<i$, it follows from \eqref{les5.3} that $H^l_{I,J}(\mathscr M/a^t\mathscr M)$ is finitely generated for $l<i-1$. 

\smallskip
Since $a^t\mathscr N\subseteq J\mathscr N$, we also have the following diagram where the top row is exact and the vertical morphisms are epimorphisms
\begin{equation}\label{cdes5.4}
\begin{CD}
H^{i-1}_{I,J}(\mathscr M) @>\iota>> H^{i-1}_{I,J}(\mathscr M/a^t\mathscr M) @>\delta>> H^i_{I,J}(\mathscr M) @>\mu:=a^t>> H^i_{I,J}(\mathscr M)\\
@. @. @V{\pi}VV @VV\pi'V \\
 @.  @.  H^i_{I,J}(\mathscr M)/\mathscr N@>\bar{\mu}:=\bar{a}^t>>  H^i_{I,J}(\mathscr M)/  J\mathscr N\\
\end{CD}
\end{equation}   We now observe that $Ker(\bar\mu)
=\mu^{-1}(J\mathscr N)/\mathscr N$.  This gives us the short exact sequence
\begin{equation}\label{5.9sesc}\small 
 0\longrightarrow H^{i-1}_{I,J}(\mathscr M/a^t\mathscr M)/\delta^{-1}(\mathscr N)=(Im(\delta)+\mathscr N)/\mathscr N\longrightarrow Ker(\bar\mu)
=\mu^{-1}(J\mathscr N)/\mathscr N\longrightarrow \mu^{-1}(J\mathscr N)/(Ker(\mu)+\mathscr N)\longrightarrow 0
\end{equation} where we have used $Ker(\mu)=Im(\delta)$. Since $J\mathscr N\subseteq \mathscr N$ is finitely generated, so is its subobject $\mu^{-1}(J\mathscr N)/Ker(\mu)$. Then, the quotient $\mu^{-1}(J\mathscr N)/(Ker(\mu)+\mathscr N)$ of $\mu^{-1}(J\mathscr N)/Ker(\mu)$ is also finitely generated. 

\smallskip On the other hand, we have the short exact sequence
\begin{equation}\label{ses5.5}
0\longrightarrow Im(\iota)\cap \delta^{-1}(\mathscr N)= Ker(\delta)\cap \delta^{-1}(\mathscr N)\longrightarrow \delta^{-1}(\mathscr N)\overset{\delta}{\longrightarrow} \delta(\delta^{-1}(\mathscr N))\longrightarrow 0
\end{equation} By assumption, $H^{i-1}_{I,J}(\mathscr M)$ is finitely generated and hence so is its subquotient $Im(\iota)\cap \delta^{-1}(\mathscr N)$. Also since $\mathscr N$ is finitely generated, so is $\delta(\delta^{-1}(\mathscr N))\subseteq \mathscr N$. Again since $\mathscr S_R$ is locally noetherian, it follows from \eqref{ses5.5} that $\delta^{-1}(\mathscr N)$
is finitely generated.  Additionally, we note that
\begin{equation}\label{5.12dj}
\delta(JH^{i-1}_{I,J}(\mathscr M/a^t\mathscr M))\subseteq J\delta(H^{i-1}_{I,J}(\mathscr M/a^t\mathscr M))\subseteq JH^i_{I,J}(\mathscr M) \subseteq \mathscr N\qquad \Rightarrow \qquad JH^{i-1}_{I,J}(\mathscr M/a^t\mathscr M)\subseteq \delta^{-1}(\mathscr N)
\end{equation}

We now put $\mathscr K:= H^{i-1}_{I,J}(\mathscr M/a^t\mathscr M) /\delta^{-1}(\mathscr N)$ and $\mathscr P:=Ker(\bar\mu)$. By the induction assumption, we see that $Ass(\mathscr K)$ is a finite set.  Since $\mu^{-1}(J\mathscr N)/(Ker(\mu)+\mathscr N)$ is finitely generated,  from the short exact sequence \eqref{5.9sesc}, it now follows from Proposition \ref{P3.4} that $Ass(\mathscr P)$ is finite.

\smallskip Since $\mathscr N$ is finitely generated, so is its image $\pi'(\mathscr N)$. To prove the result, it now suffices to show  that
\begin{equation}\label{5.6t}
Ass(H^i_{I,J}(\mathscr M)/\mathscr N)\subseteq Ass(\mathscr P)\cup Ass(\pi'(\mathscr N))
\end{equation} Accordingly, we choose some prime ideal $\mathfrak p\in Ass(H^i_{I,J}(\mathscr M)/\mathscr N)\backslash Ass(\mathscr P)$. Then there is an $R$-elementary object $\mathscr L\subseteq H^i_{I,J}(\mathscr M)/\mathscr N$ such that $\mathfrak p=Ann(\mathscr L)$. Applying Lemma \ref{L5.29}, we can choose a finitely generated subobject $\mathscr L'\subseteq H^i_{I,J}(\mathscr M)$ such that $\pi:H^i_{I,J}(\mathscr M)\longrightarrow H^i_{I,J}(\mathscr M)/\mathscr N$ restricts to an epimorphism $\mathscr L'\twoheadrightarrow\mathscr L$ and $(\mathscr N:\mathscr L')=Ann(\mathscr L)=\mathfrak p$. We now consider the short exact sequence
\begin{equation}\label{5.7f}
\begin{CD}
0@>>> \mathscr P\cap \mathscr L@>>> \mathscr L@>\bar\mu|\mathscr L=\bar{a}^t>>\bar{a}^t\mathscr L@>>> 0
\end{CD}
\end{equation} By Proposition \ref{P3.4}, we get $Ass(\mathscr L)\subseteq Ass(\mathscr P\cap \mathscr L)\cup  Ass(\bar{a}^t\mathscr L)
\subseteq Ass(\mathscr P)\cup  Ass(\bar{a}^t\mathscr L)$. Since $\mathfrak p\in Ass(\mathscr L)$ does not lie in $Ass(\mathscr P)$, we obtain $\mathfrak p\in  Ass(\bar{a}^t\mathscr L)=Ass(\bar{a}^t\pi(\mathscr L'))=Ass(\pi' a^t\mathscr L')$.  

\smallskip
We know that $a^t\mathscr L'\subseteq \mathscr L'$ is finitely generated. Again since $H^i_{I,J}(\mathscr M)$ is $(I,J)$-torsion, it follows that $a^{t+s}\mathscr L'\subseteq J\mathscr L'\subseteq JH^i_{I,J}(\mathscr M)\subseteq \mathscr N$ for some $s\geq 1$.   But since $(\mathscr N:\mathscr L')=\mathfrak p$, we get $a^{t+s}\in \mathfrak p$, i.e., $a\in \mathfrak p$. Again since $\mathfrak p=(\mathscr N:\mathscr L')$, we obtain $a^t\mathscr L'\subseteq \mathscr N$ and hence
$\pi' a^t\mathscr L'\subseteq \pi'(\mathscr N)$. Then, $\mathfrak p\in Ass(\pi'a^t\mathscr L')\subseteq Ass(\pi'(\mathscr N))$. 
\end{proof}

\begin{Thm}\label{TBlas}
Let $\mathscr M\in \mathscr S_R$ be finitely generated and let $I,J\subseteq R$ be  ideals. Suppose that $i\geq 0$ is such that $H^j_{I,J}(\mathscr M)$ is finitely generated for all $j<i$. Suppose that $JH^i_{I,J}(\mathscr M)$ is finitely generated. Then, for any finitely generated $\mathscr N\subseteq H^i_{I,J}(\mathscr M)$, the collection $Ass(H^i_{I,J}(\mathscr M)/\mathscr N)$ is finite.
\end{Thm}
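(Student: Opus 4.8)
The plan is to deduce Theorem \ref{TBlas} from Theorem \ref{T5.3} by a reduction that replaces an arbitrary finitely generated subobject $\mathscr N$ with one that additionally contains $JH^i_{I,J}(\mathscr M)$. The hypothesis that $JH^i_{I,J}(\mathscr M)$ is finitely generated is exactly what makes this possible: given any finitely generated $\mathscr N \subseteq H^i_{I,J}(\mathscr M)$, I would form $\mathscr N' := \mathscr N + JH^i_{I,J}(\mathscr M)$, which is again finitely generated (a sum of two finitely generated subobjects, using that $\mathscr S_R$ is locally noetherian so finitely generated objects are closed under extensions and quotients, hence under finite sums) and visibly satisfies $\mathscr N' \supseteq JH^i_{I,J}(\mathscr M)$. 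Theorem \ref{T5.3} then applies to $\mathscr N'$ and gives that $Ass(H^i_{I,J}(\mathscr M)/\mathscr N')$ is finite.

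It remains to transfer finiteness from $Ass(H^i_{I,J}(\mathscr M)/\mathscr N')$ back to $Ass(H^i_{I,J}(\mathscr M)/\mathscr N)$. Write $\mathscr H := H^i_{I,J}(\mathscr M)$ for brevity. Since $\mathscr N \subseteq \mathscr N' \subseteq \mathscr H$, there is a short exact sequence
\begin{equation}
0 \longrightarrow \mathscr N'/\mathscr N \longrightarrow \mathscr H/\mathscr N \longrightarrow \mathscr H/\mathscr N' \longrightarrow 0.
\end{equation}
By Proposition \ref{P3.4}, $Ass(\mathscr H/\mathscr N) \subseteq Ass(\mathscr N'/\mathscr N) \cup Ass(\mathscr H/\mathscr N')$. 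The second set is finite by the previous paragraph. For the first set, note that $\mathscr N'/\mathscr N$ is a quotient of $\mathscr N'$, which is finitely generated; hence $\mathscr N'/\mathscr N$ is finitely generated, and by Corollary \ref{C3.6} its set of associated primes is finite. Therefore $Ass(\mathscr H/\mathscr N)$ is contained in a union of two finite sets, and is finite.

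I do not anticipate a serious obstacle here — this is a bootstrapping corollary of Theorem \ref{T5.3} whose entire content is the observation that enlarging $\mathscr N$ to contain $JH^i_{I,J}(\mathscr M)$ keeps it finitely generated precisely when $JH^i_{I,J}(\mathscr M)$ itself is finitely generated, and that passing between the two quotients costs only the finitely many associated primes of the finitely generated object $\mathscr N'/\mathscr N$. The only point requiring a touch of care is the closure of finitely generated objects under finite sums in $\mathscr S_R$, which follows from local noetherianity via \cite[V.4.2]{Sten} exactly as invoked in the proof of Theorem \ref{T5.3}. One could also phrase the final step without the short exact sequence by directly writing $\mathscr H/\mathscr N \twoheadrightarrow \mathscr H/\mathscr N'$ with kernel $\mathscr N'/\mathscr N$ and applying Proposition \ref{P3.4}, but the displayed sequence is the cleanest route.
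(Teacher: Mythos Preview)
Your proposal is correct and is essentially identical to the paper's own proof: both enlarge $\mathscr N$ to $\mathscr N' = \mathscr N + JH^i_{I,J}(\mathscr M)$, apply Proposition \ref{T5.3} to $\mathscr N'$, and then use the short exact sequence $0 \to \mathscr N'/\mathscr N \to \mathscr H/\mathscr N \to \mathscr H/\mathscr N' \to 0$ together with Proposition \ref{P3.4} and Corollary \ref{C3.6} to conclude.
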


\begin{proof}
Since $\mathscr N$ and $JH^i_{I,J}(\mathscr M)$ are both finitely generated, so is $\mathscr N+JH^i_{I,J}(\mathscr M)$. By Proposition \ref{T5.3}, it now follows that $Ass(H^i_{I,J}(\mathscr M)/(\mathscr N+JH^i_{I,J}(\mathscr M))$ is finite. We now consider the short exact sequence
\begin{equation}\label{515sesd}
0\longrightarrow (\mathscr N+JH^i_{I,J}(\mathscr M))/\mathscr N\longrightarrow  H^i_{I,J}(\mathscr M)/\mathscr N\longrightarrow H^i_{I,J}(\mathscr M)/(\mathscr N+JH^i_{I,J}(\mathscr M))\longrightarrow 0
\end{equation} Since $ (\mathscr N+JH^i_{I,J}(\mathscr M))/\mathscr N$ is finitely generated and $Ass(H^i_{I,J}(\mathscr M)/(\mathscr N+JH^i_{I,J}(\mathscr M))$ is finite, the result follows by applying Proposition \ref{P3.4} to the short exact sequence \eqref{515sesd}.
\end{proof}

\begin{cor}\label{C5.4f}
Let $\mathscr M\in \mathscr S_R$ be finitely generated and let $I,J\subseteq R$ be  ideals. Suppose that $i\geq 0$ is such that $H^j_{I,J}(\mathscr M)$ is finitely generated for all $j<i$. Suppose that $JH^i_{I,J}(\mathscr M)$ is finitely generated. Then, the collection $Ass(H^i_{I,J}(\mathscr M))$ is finite.
\end{cor}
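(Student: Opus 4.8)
The plan is to deduce this immediately from Theorem \ref{TBlas} by taking the finitely generated subobject there to be $\mathscr N = 0$. The zero subobject is trivially finitely generated in the locally noetherian category $\mathscr S_R$, and the quotient $H^i_{I,J}(\mathscr M)/\mathscr N$ is then just $H^i_{I,J}(\mathscr M)$ itself. With this choice, the hypotheses required by Theorem \ref{TBlas} --- that $\mathscr M$ be finitely generated, that $H^j_{I,J}(\mathscr M)$ be finitely generated for all $j < i$, and that $JH^i_{I,J}(\mathscr M)$ be finitely generated --- coincide exactly with those assumed here, so the conclusion that $Ass(H^i_{I,J}(\mathscr M)/\mathscr N) = Ass(H^i_{I,J}(\mathscr M))$ is finite follows at once.

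All of the substantive work has already been done in the preceding results: the inductive argument of Proposition \ref{T5.3}, which runs the long exact sequence obtained by applying $\Gamma_{I,J}$ to $0 \to \mathscr M \xrightarrow{a^t} \mathscr M \to \mathscr M/a^t\mathscr M \to 0$ for a non-zerodivisor $a \in I$ (produced via Lemma \ref{L5.2}), then controls the associated primes using Lemma \ref{L5.29} and Proposition \ref{P3.4}; and Theorem \ref{TBlas} removes the hypothesis $\mathscr N \supseteq JH^i_{I,J}(\mathscr M)$ at the cost of assuming $JH^i_{I,J}(\mathscr M)$ finitely generated, via the short exact sequence relating $H^i_{I,J}(\mathscr M)/\mathscr N$, $(\mathscr N+JH^i_{I,J}(\mathscr M))/\mathscr N$ and $H^i_{I,J}(\mathscr M)/(\mathscr N+JH^i_{I,J}(\mathscr M))$. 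Consequently there is no remaining obstacle for the corollary; the only thing to check is the purely formal point that $\mathscr N = 0$ is an admissible input to Theorem \ref{TBlas}, which it is. (Specializing further to $J = 0$ recovers, in the abstract module category $\mathscr S_R$, the finiteness statement of Brodmann and Faghani.)
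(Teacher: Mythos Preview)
Your proof is correct and follows exactly the same approach as the paper: apply Theorem \ref{TBlas} with $\mathscr N = 0$. The additional commentary you include about the underlying machinery is accurate but not needed for the corollary itself.
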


\begin{proof} This follows directly from Theorem \ref{TBlas} by setting $\mathscr N=0$.
\end{proof}

We conclude this section with the following fact, which extends a result of Brodmann, Rotthaus and Sharp \cite{BRS}. 

\begin{thm}
Let $\mathscr M\in \mathscr S_R$ be such that the collection $\chi$ of maximal elements in $Ass(\mathscr M)$ is finite. Let $I\subseteq R$ be an ideal. Suppose that for each $\mathfrak p\in \chi$, there is 
$n_{\mathfrak p}>0$ such that $I^{n_{\mathfrak p}}\mathscr M_{\mathfrak p}=0$. If $n:=max\{\mbox{$n_{\mathfrak p}$ $\vert$ $\mathfrak p\in \chi$}\}$, then
$I^n\mathscr M=0$. 
\end{thm}

\begin{proof}
We consider some $\mathscr N\in fg(\mathscr M)$. It is clear that $(I^n\mathscr N)_{\mathfrak p}\subseteq I^n\mathscr M_{\mathfrak p}=0$. Since $\mathscr S_R$ is locally noetherian, 
we know that $I^n\mathscr N\subseteq \mathscr N$ is finitely generated. From \eqref{kerun}, we see that for each $\mathfrak p\in \chi$, we have
\begin{equation}\label{kerunsk}
(I^n\mathscr N)=Ker\left((I^n\mathscr N)\longrightarrow (I^n\mathscr N)_{\mathfrak p}=0\right)=\underset{t\in R\backslash \mathfrak p}{\bigcup}\textrm{ }Ker((I^n\mathscr N)\overset{t}{\longrightarrow} (I^n\mathscr N))
\end{equation}  Since the union on the right hand side of \eqref{kerunsk} is filtered, we can find $s_{\mathfrak p}\in R\backslash \mathfrak p$ such that $s_{\mathfrak p}
\in Ann(I^n\mathscr N)$. Let $K\subseteq R$ be the ideal generated by the collection $\{s_{\mathfrak p}\}_{\mathfrak p\in \chi}$. It is clear that 
$K\subseteq Ann(I^n\mathscr N)$.   Since $K$ is not contained in any of the prime ideals in 
$\chi$, and $\chi$ is finite, it follows by prime avoidance that we can choose an element  $r\in K\backslash \underset{\mathfrak p\in \chi}{\bigcup}\textrm{ }\mathfrak p$. From Lemma 
\ref{L5.1}, it is clear that  $r$ must be a non-zero divisor on 
$\mathscr M$. But multiplication by $r$ is zero on the subobject $I^n\mathscr N\subseteq \mathscr M$. Hence, $I^n\mathscr N=0$. It follows that 
$I^n\mathscr M=\underset{\mathscr N\in fg(\mathscr M)}{\bigcup}I^n\mathscr N=0$. 
\end{proof}

\section{Spectral sequences for local cohomology type functors on $\mathscr S_R$}

In this section, we will show that we can construct spectral sequences for ``local cohomology type'' functors on $\mathscr S_R$ by creating an axiomatic setup similar to \`{A}lvarez Montaner, Boix and Zarzuela \cite{Alz}. Our main tools will be the properties of injective hulls of $R$-elementary objects proved in Section 2. We will then exhibit three different situations where this axiomatic setup can be used to obtain spectral sequences. 

\smallskip
Let $P$ be a finite poset and let $Fun(P,\mathscr S_R)$ denote the category of systems of objects in $\mathscr S_R$ indexed over $P$. We suppose from now on  that $\mathscr S$ has enough projectives. Then, it follows from Artin and Zhang \cite[Lemma D3.2]{AZ} that $\mathscr S_R$ has enough projectives. Then we know that $Fun(P,\mathscr S_R)$ has enough projectives and $Fun(P^{op},\mathscr S_R)$ has enough injectives (see, for instance, \cite[$\S$ 2.3]{Web}). We also  note that $Fun(P,\mathscr S_R)$ and $Fun(P^{op},
\mathscr S_R)$ are Grothendieck categories.

\smallskip
For any functor $\mathfrak G\in Fun(P^{op},\mathscr S_R)$, we consider the  complex $(C^\bullet(\mathfrak G),\partial^\bullet)$ given by 
\begin{equation}\label{Rooscoh}
C^k(\mathfrak G):=\underset{p_0< p_1<...< p_k}{\prod}\mathfrak G(p_0)\qquad\qquad \partial^k=\underset{j=0}{\overset{k+1}{\sum}}
(-1)^j\partial_j^k:C^k(\mathfrak G)\longrightarrow C^{k+1}(\mathfrak G)
\end{equation} where $\partial^k_j$ is induced by deleting the $j$-th term in the sequence $\{p_0<p_1< ...< p_k< p_{k+1}\}$. Because $P$ is a finite poset, it follows from the proof of \cite[Lemma A.3.2]{Neem} that this complex computes the derived functor of the inverse limit over $P$, i.e., for any $\mathfrak G\in Fun(P^{op},\mathscr S_R)$, we have $H^i(C^\bullet(\mathfrak G))=\mathbb R^i\lim_{p\in P}\mathfrak G(p)$.

\smallskip
Similarly, for any functor $\mathfrak F\in Fun(P,\mathscr S_R)$, we consider the complex the  complex $(C_\bullet(\mathfrak F),\partial_\bullet)$ given by
\begin{equation}\label{Roosho}
C_k(\mathfrak F):=\underset{p_0< p_1< ...< p_k}{\bigoplus}\mathfrak F(p_0)\qquad\qquad \partial_k=\underset{j=0}{\overset{k+1}{\sum}}
(-1)^j\partial^j_k:C_{k+1}(\mathfrak F)\longrightarrow C_{k}(\mathfrak F)
\end{equation} where $\partial_k^j$ is induced by deleting the $j$-th term in the sequence $\{p_0< p_1< ...< p_k< p_{k+1}\}$. Then, it follows from \cite[B.1.2]{Neem} that  this complex computes the derived functor of the direct limit over $P$, i.e., for any $\mathfrak F\in Fun(P,\mathscr S_R)$, we have $H_i(C_\bullet(\mathfrak F))=\mathbb L_i\text{colim}_{p\in P}\mathfrak F(p)$.

\smallskip
From now onwards, we fix an ideal $I\subseteq R$. We suppose that $I$ may be expressed as $I=I_{\alpha_1}\cap ... \cap I_{\alpha_k}$, where $I_{\alpha_1}$,...,$I_{\alpha_k}$ are ideals of $R$. As in \cite[$\S$ 2]{Alz}, we let $P$ denote the finite poset whose elements are all the possible different sums of the ideals $I_1$, ...,$I_n$, ordered by reverse inclusion. By $\hat{P}$, we will mean the poset $\hat{P}=
P\cup \{1_{\hat{P}}\}$ obtained by adding a  final element $1_{\hat{P}}$ to $P$ (even if $P$ already has a final element).  
The Alexandrov topology on ${P}$ (see \cite[$\S$ 2.2.1]{Alz}) is that whose open sets $U\subseteq {P}$ satisfy the property  that $p\in U$ and $p\leq q$ implies $q\in U$. If ${P}$ has the Alexandrov topology, we know (see \cite[Lemma 2.9]{Alz}) that the basic open set $[p,1_{\hat{P}})=\{\mbox{$q\in P$ $\vert$ $q\geq p$}\}$ is contractible for each $p\in P$.

\smallskip
For any $p\in P$, we will denote by $I_p$ the sum of ideals corresponding to $p\in P$. 
We now fix an ideal $J$ and an additive functor $\Psi_{[\ast]}:\mathscr S_R\longrightarrow Fun(\hat{P},\mathscr S_R)$ that satisfies the following conditions analogous to \cite[Setup 4.3]{Alz}. 

\smallskip
(1) For each $p\in \hat{P}$, the functor $\Psi_p:=\Psi_{[\ast]}(\_\_)(p):\mathscr S_R\longrightarrow \mathscr S_R$ is left exact and preserves direct sums. 

\smallskip
(2) Let $\mathscr L\in \mathscr S_R$ be an $R$-elementary object. Let $\mathscr E(\mathscr L)\in \mathscr S_R$ be the injective hull of $\mathscr L$ and let $\mathfrak{p}:=Ann(\mathscr N)$. Then, for any maximal ideal $\mathfrak m$ in $R$, there are objects $\mathscr X(\mathscr L,\mathfrak{m})$ and 
$\mathscr Y(\mathscr L,\mathfrak{m})\in \mathscr S_R$ such that for $p\in \hat{P}$, we have
\begin{equation}\label{eexxyy}
\Psi_p(\mathscr E(\mathscr L))_{\mathfrak m}=\left\{\begin{array}{ll}
\mathscr X(\mathscr L,\mathfrak{m}) & \mbox{if $\mathfrak p\in \mathbb W(I_p,J)$ and $\mathfrak p\subseteq \mathfrak m$} \\
\mathscr Y(\mathscr L,\mathfrak{m}) & \mbox{if $\mathfrak p\notin \mathbb W(I_p,J)$ and $\mathfrak p \subseteq \mathfrak m$} \\
0 & \mbox{otherwise} \\
\end{array}\right.
\end{equation} 

(3) For any $p\leq q$, the natural transformation $\Psi_p\longrightarrow \Psi_q$ satisfies, for any $R$-elementary object $\mathscr L\in \mathscr S_R$ and 
maximal ideal $\mathfrak m$:
\begin{equation}\label{ey5.4c}
\Psi_p(\mathscr E(\mathscr L))_{\mathfrak m}\longrightarrow \Psi_q(\mathscr E(\mathscr L))_{\mathfrak m}=\left\{
\begin{array}{ll}
id_{\mathscr X(\mathscr L,\mathfrak{m})}&\mbox{if $\Psi_p(\mathscr E(\mathscr L))_{\mathfrak m}=\mathscr X(\mathscr L,\mathfrak{m})=\Psi_q(\mathscr E(\mathscr L))_{\mathfrak m}$} \\
id_{\mathscr Y(\mathscr L,\mathfrak{m})}& \mbox{if $\Psi_p(\mathscr E(\mathscr L))_{\mathfrak m}=\mathscr Y(\mathscr L,\mathfrak{m})=\Psi_q(\mathscr E(\mathscr L))_{\mathfrak m}$} \\
0 & \mbox{otherwise} \\
\end{array}\right.
\end{equation}  We also set
$\Psi:=\Psi_{1_{\hat{P}}}:\mathscr S_R\longrightarrow \mathscr S_R$. Moreover, for $i\geq 0$ and any $\mathscr M\in \mathscr S_R$, we let $\mathbb R^i\Psi_{[\ast]}(\mathscr M)$ denote the direct system of derived functors $\{\mathbb R^i\Psi_p(\mathscr M)\}_{p\in P}$.

\begin{lem}\label{L5.1lp} Let $\mathscr M\in \mathscr S_R$ be such that the localization $\mathscr M_{\mathfrak m}=0$ for every  maximal ideal $\mathfrak m
\subseteq R$. Then, $\mathscr M=0$.
\end{lem}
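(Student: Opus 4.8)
The plan is to argue by contradiction, using the existence of associated primes together with the computation of supports for finitely generated objects. Suppose $\mathscr M\neq 0$. By Proposition \ref{P3.3}, the set $Ass(\mathscr M)$ is non-empty, so we may pick a prime ideal $\mathfrak p\in Ass(\mathscr M)$ together with an $R$-elementary subobject $\mathscr L\subseteq \mathscr M$ with $Ann(\mathscr L)=\mathfrak p$. By Definition \ref{D3.1}, $\mathscr L$ is finitely generated, so Proposition \ref{L3.8uh}(b) gives $Supp(\mathscr L)=\mathbb V(\mathfrak p)$.

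Next I would choose a maximal ideal $\mathfrak m\subseteq R$ containing $\mathfrak p$; such an $\mathfrak m$ exists because $\mathfrak p$ is a proper ideal of the ring $R$. Then $\mathfrak m\in \mathbb V(\mathfrak p)=Supp(\mathscr L)$, so by Definition \ref{D3.7l} the localization $\mathscr L_{\mathfrak m}\neq 0$.

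Finally, since $R_{\mathfrak m}=R[(R\setminus\mathfrak m)^{-1}]$ is a flat $R$-module, localization at $\mathfrak m$ is an exact functor on $\mathscr S_R$ (as recorded before Definition \ref{D3.7l}); applying it to the inclusion $\mathscr L\hookrightarrow \mathscr M$ yields a monomorphism $\mathscr L_{\mathfrak m}\hookrightarrow \mathscr M_{\mathfrak m}$. Hence $\mathscr M_{\mathfrak m}\neq 0$, which contradicts the hypothesis that $\mathscr M_{\mathfrak m}=0$ for every maximal ideal $\mathfrak m$. Therefore $\mathscr M=0$.

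There is no real obstacle in this argument; the only point requiring care is that one must exhibit a \emph{concrete} nonzero subobject of $\mathscr M$ whose support is controlled, and this is precisely what the $R$-elementary object $\mathscr L$ supplies — Proposition \ref{L3.8uh}(b) is what converts the annihilator condition $Ann(\mathscr L)=\mathfrak p$ into the statement $\mathfrak m\in Supp(\mathscr L)$. (One could alternatively first reduce to the case where $\mathscr M$ is finitely generated, using that $\mathscr S_R$ is locally noetherian and that a subobject localizes to a subobject, but this reduction is not needed for the proof above.)
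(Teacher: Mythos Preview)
Your proof is correct. The paper's argument is close in spirit but slightly more elementary: rather than invoking associated primes, it takes an arbitrary finitely generated subobject $\mathscr M'\subseteq \mathscr M$, notes that exactness of localization forces $\mathscr M'_{\mathfrak m}=0$ for every maximal $\mathfrak m$, and then uses Proposition~\ref{L3.8uh}(b) to conclude $\mathbb V(Ann(\mathscr M'))$ contains no maximal ideal, hence $Ann(\mathscr M')=R$ and $\mathscr M'=0$; since $\mathscr S_R$ is locally noetherian, $\mathscr M=0$. Your route through Proposition~\ref{P3.3} and an $R$-elementary subobject $\mathscr L$ is a perfectly good alternative: it trades the reduction ``every finitely generated subobject is zero'' for the exhibition of a single explicit witness $\mathscr L$ whose support is already known to be $\mathbb V(\mathfrak p)$, and hence visibly contains a maximal ideal. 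Both arguments hinge on the same two ingredients (exactness of localization and Proposition~\ref{L3.8uh}(b)); yours uses a bit more of the paper's machinery but is arguably more direct as a proof by contradiction. Your parenthetical remark about first reducing to the finitely generated case is in fact exactly the paper's approach.
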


\begin{proof}
We consider any $\mathscr M'\in fg(\mathscr M)$. Since localization is exact, we have inclusions $\mathscr M'_{\mathfrak m}\hookrightarrow \mathscr M_{\mathfrak m}=0$ and hence 
$\mathscr M'_{\mathfrak m}=0$ for each maximal ideal $\mathfrak m$. Since $\mathscr M'$ is finitely generated, it follows by Proposition \ref{L3.8uh} that $Supp(\mathscr M')=\mathbb V(Ann(\mathscr M'))$. Hence, we have $\mathbb V(Ann(\mathscr M'))=\phi$, i.e., $\mathscr M'=0$. Since $\mathscr S_R$ is locally noetherian, $\mathscr M$ is the sum of all its finitely generated subobjects. Hence, $\mathscr M=0$. 
\end{proof}

\begin{lem}\label{L5.2dm} Let $\mathscr E\in \mathscr S_R$ be an injective object.   Then, 
the complex $C_\bullet(\Psi_{[\ast]}(\mathscr E)) \longrightarrow  \Psi(\mathscr E ) \longrightarrow 0$ is exact.

\end{lem}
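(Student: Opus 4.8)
The plan is to strip the statement down, by two d\'evissage steps, to an elementary fact about the finite poset $\hat P$, and then to read it off from the contractibility of the basic open sets of $\hat P$. By Theorem \ref{T3.8bi} we may write the injective $\mathscr E$ as a direct sum $\bigoplus_\lambda\mathscr E(\mathscr L_\lambda)$ of injective hulls of $R$-elementary objects; since every $\Psi_p$ preserves direct sums (condition (1)) so does $\Psi_{[\ast]}$, and since $C_\bullet(\_\_)$ is built degreewise from direct sums of the $\Psi_p$-values we get $C_\bullet(\Psi_{[\ast]}(\mathscr E))\cong\bigoplus_\lambda C_\bullet(\Psi_{[\ast]}(\mathscr E(\mathscr L_\lambda)))$, compatibly with the augmentations to $\Psi(\mathscr E)$. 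As direct sums of exact sequences in the Grothendieck category $\mathscr S_R$ are exact, we may assume $\mathscr E=\mathscr E(\mathscr L)$ for a single $R$-elementary $\mathscr L$; put $\mathfrak p:=Ann(\mathscr L)$. Next, localization at a prime is exact and commutes with the direct sums defining $C_\bullet$, so by Lemma \ref{L5.1lp} it is enough to prove, for each maximal ideal $\mathfrak m$, that the localized augmented complex built from $\Psi_p(\mathscr E(\mathscr L))_{\mathfrak m}$, $p\in P$, is exact. If $\mathfrak p\not\subseteq\mathfrak m$ all its terms vanish by condition (2); so assume $\mathfrak p\subseteq\mathfrak m$ and write $\mathscr X:=\mathscr X(\mathscr L,\mathfrak m)$, $\mathscr Y:=\mathscr Y(\mathscr L,\mathfrak m)$.

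Consider the diagram $\mathfrak F:=\Psi_{[\ast]}(\mathscr E(\mathscr L))_{\mathfrak m}\colon\hat P\to\mathscr S_R$. By condition (2) it takes the value $\mathscr X$ on $U:=\{p\in\hat P : \mathfrak p\in\mathbb W(I_p,J)\}$ and the value $\mathscr Y$ on $U^{c}$, and by condition (3) every transition map of $\mathfrak F$ is an identity of $\mathscr X$ or of $\mathscr Y$, or is zero. Since $\mathbb W(\_\_,J)$ reverses inclusions and $p\le q$ forces $I_p\supseteq I_q$, the set $U$ is an up-set; hence the only transition maps of $\mathfrak F$ that could be ``mixed'' run from $U^{c}$ into $U$, and these are zero. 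Therefore $\mathfrak F$ splits in $Fun(\hat P,\mathscr S_R)$ as $\mathfrak F_U\oplus\mathfrak F_{U^{c}}$, where $\mathfrak F_U$ (resp. $\mathfrak F_{U^{c}}$) is the constant diagram $\mathscr X$ (resp. $\mathscr Y$) supported on $U$ (resp. $U^{c}$) and zero elsewhere. As $C_\bullet(\_\_)$ and the augmentation are additive, the augmented complex $C_\bullet(\mathfrak F|_P)\to\mathfrak F(1_{\hat P})$ splits as the direct sum of the augmented complexes of $\mathfrak F_U$ and of $\mathfrak F_{U^{c}}$, and it suffices to treat each.

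Because $U$ is an up-set, $C_k(\mathfrak F_U|_P)$ is a direct sum of copies of $\mathscr X$ indexed by the $k$-chains of the order complex of $U\cap P$, with the simplicial differential (all transitions being identities). Using that $\mathfrak p\in\mathbb W(\sum_{i\in S}I_{\alpha_i},J)$ iff $\mathfrak p\in\mathbb W(I_{\alpha_i},J)$ for every $i\in S$, together with the relation $I=I_{\alpha_1}\cap\dots\cap I_{\alpha_k}$ governing condition (2) at $p=1_{\hat P}$ (equivalently, $\mathbb W(I,J)=\mathbb W(I_{\alpha_1},J)\cup\dots\cup\mathbb W(I_{\alpha_k},J)$), one checks that $U$ is either empty or a basic open interval $[q_0,1_{\hat P}]$ of $\hat P$, with $q_0\in P$ the element corresponding to $\sum_{i\,:\,\mathfrak p\in\mathbb W(I_{\alpha_i},J)}I_{\alpha_i}$. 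If $U$ is nonempty then $1_{\hat P}\in U$, so the augmentation of the $\mathfrak F_U$-complex targets $\mathscr X$, and $U\cap P=[q_0,1_{\hat P})$ has least element $q_0$ and is therefore contractible (\cite[Lemma 2.9]{Alz}); this complex is then $\mathscr X\otimes_k\widetilde C_\bullet(U\cap P;k)$, split exact since the reduced chain complex of a cone is null-homotopic over $\mathbb Z$. For $\mathfrak F_{U^{c}}$, $C_k(\mathfrak F_{U^{c}}|_P)$ is $\mathscr Y$ tensored with the relative simplicial chains of $(P,U\cap P)$, and $\mathfrak F_{U^{c}}(1_{\hat P})=0$ precisely when $U\neq\emptyset$. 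As $P$ has a least element (the one corresponding to $I_{\alpha_1}+\dots+I_{\alpha_k}$) it is contractible; hence if $U\neq\emptyset$ the inclusion $U\cap P\hookrightarrow P$ of contractible order complexes is a homology isomorphism and $\mathscr Y\otimes_k C_\bullet(P,U\cap P;k)$ is acyclic, while if $U=\emptyset$ the complex is $\mathscr Y\otimes_k\widetilde C_\bullet(P;k)$, again split exact. This covers all cases, and unwinding the two reductions finishes the proof.

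The main obstacle is the middle assertion that $U$ is a basic open set of $\hat P$: this is what forces the augmentation target $\mathfrak F(1_{\hat P})$ to equal $\mathscr X$ exactly when $U\cap P\neq\emptyset$, and it is the only place where the shape of the decomposition $I=I_{\alpha_1}\cap\dots\cap I_{\alpha_k}$ and the behaviour of $\mathbb W(\_\_,J)$ under sums and intersections are genuinely needed. Everything else — the direct-sum splitting forced by condition (3), and the passage from contractibility of a finite poset to null-homotopy of the associated complex of objects of $\mathscr S_R$ — is formal, the latter being harmless because $k$ is a field and the cone contractions exist integrally.
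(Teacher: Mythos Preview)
Your two d\'evissage steps---splitting $\mathscr E$ into injective hulls of $R$-elementary objects via Theorem~\ref{T3.8bi}, then localizing at maximal ideals via Lemma~\ref{L5.1lp}---are exactly what the paper does before deferring the remainder to \cite[Lemma~4.4]{Alz}. Your poset argument is a faithful reconstruction of that reference, and the decomposition $\mathfrak F=\mathfrak F_U\oplus\mathfrak F_{U^c}$ together with the contractibility reasoning is correct on its own terms.

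The genuine gap is precisely where you flag the ``main obstacle'': the assertion that $\mathbb W(I,J)=\mathbb W(I_{\alpha_1},J)\cup\dots\cup\mathbb W(I_{\alpha_k},J)$ is false for general $J$. Take $R=k[x,y]$, $I_{\alpha_1}=(x)$, $I_{\alpha_2}=(y)$, $J=(xy)$, so $I=(x)\cap(y)=(xy)$, and $\mathfrak p=(0)$. Then $\mathfrak p+J=(xy)$ is radical, and $I=(xy)\subseteq(xy)$ gives $\mathfrak p\in\mathbb W(I,J)$; but $x^n\notin(xy)$ and $y^n\notin(xy)$ for all $n$, so $\mathfrak p\notin\mathbb W(I_{\alpha_1},J)\cup\mathbb W(I_{\alpha_2},J)$. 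One checks similarly $\mathfrak p\notin\mathbb W((x,y),J)$. Thus $U=\{1_{\hat P}\}$ with $U\cap P=\emptyset$, and your augmented $\mathfrak F_U$-complex reads $0\to\mathscr X\to 0$, which is not exact when $\mathscr X\neq 0$. The underlying issue is that $\mathrm{rad}(\mathfrak p+J)$ need not be prime, so $I_1\cap I_2\subseteq\mathrm{rad}(\mathfrak p+J)$ does not force $I_1\subseteq\mathrm{rad}(\mathfrak p+J)$ or $I_2\subseteq\mathrm{rad}(\mathfrak p+J)$.

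When $J=0$ one has $\mathbb W(I_p,0)=\mathbb V(I_p)$ and $\mathbb V(I_{\alpha_1}\cap\dots\cap I_{\alpha_k})=\mathbb V(I_{\alpha_1})\cup\dots\cup\mathbb V(I_{\alpha_k})$, so your argument goes through verbatim; this covers the applications in \S5.2 and \S5.3. For the two-variable case of \S5.1 you would need either an additional hypothesis ensuring $\mathbb W(I,J)=\bigcup_i\mathbb W(I_{\alpha_i},J)$, or a different argument at $1_{\hat P}$; as written, the step does not stand for arbitrary $J$.
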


\begin{proof} Since localizations are exact, it follows from Lemma \ref{L5.1lp} that it suffices to show that $C_\bullet(\Psi_{[\ast]}(\mathscr E))_{\mathfrak m} \longrightarrow  \Psi(\mathscr E )_{\mathfrak m} \longrightarrow 0$ is exact for each maximal ideal $\mathfrak m$. Since $\mathscr E\in \mathscr S_R$ is injective, we know from Theorem \ref{T3.8bi} that $\mathscr E$ can be expressed as a direct sum of injective hulls of $R$-elementary objects. Since $\Psi_{[\ast]}$ and $C_\bullet$ preserve direct sums, it now suffices to check that $C_\bullet(\Psi_{[\ast]}(\mathscr E(\mathscr L)))_{\mathfrak m} \longrightarrow  \Psi(\mathscr E (\mathscr L))_{\mathfrak m} \longrightarrow 0$ is exact for each maximal ideal $\mathfrak m$, where $\mathscr L\in \mathscr S_R$ is an $R$-elementary object and $\mathscr E(\mathscr L)$ its injective hull. We now set $\mathfrak{p}:=Ann(\mathscr L)$. The rest of the proof now follows exactly as in \cite[Lemma 4.4]{Alz}. 
\end{proof}

\begin{Thm}\label{T5.3cr} Let $\mathscr S$ be a strongly locally noetherian Grothendieck category and let $R$ be a commutative noetherian ring. Then, for any $\mathscr M\in \mathscr S_R$, we have a spectral sequence
\begin{equation}
E_2^{-i,j}=\mathbb L_i\text{colim}_{p\in P}\mathbb R^j\Psi_{[\ast]}(\mathscr M)\Longrightarrow \mathbb R^{j-i}\Psi(\mathscr M)
\end{equation}
\end{Thm}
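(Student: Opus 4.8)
The plan is to obtain the spectral sequence as one of the two spectral sequences of a bicomplex assembled from an injective resolution of $\mathscr M$ and the complex $C_\bullet$ of \eqref{Roosho}, the second spectral sequence of the bicomplex degenerating by Lemma \ref{L5.2dm}. Since $\mathscr S_R$ is a Grothendieck category it has enough injectives, so fix an injective resolution $0\to\mathscr M\to\mathscr E^0\to\mathscr E^1\to\cdots$ in $\mathscr S_R$. Restricting $\Psi_{[\ast]}$ to the finite poset $P$ and applying $C_\bullet$ in each resolution degree, form the bicomplex
\begin{equation}
D_k^{\ell}:=C_k\bigl(\Psi_{[\ast]}(\mathscr E^{\ell})\bigr)=\underset{p_0<\cdots<p_k}{\bigoplus}\Psi_{p_0}(\mathscr E^{\ell}),\qquad k\ge 0,\ \ell\ge 0,
\end{equation}
with one differential lowering $k$ (from \eqref{Roosho}) and one raising $\ell$ (from the resolution). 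Because $P$ is finite, $D_k^{\ell}=0$ once $k$ exceeds the length of $P$, and $D_k^{\ell}=0$ for $\ell<0$; thus the bicomplex is bounded in the $k$-direction and bounded below in the $\ell$-direction, and both of its spectral sequences converge to the cohomology of the total complex $\mathrm{Tot}^{n}=\bigoplus_{\ell-k=n}D_k^{\ell}$.

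For the first spectral sequence I would take cohomology first in the resolution direction. Each evaluation functor $\mathfrak F\mapsto\mathfrak F(p_0)$ on $Fun(P,\mathscr S_R)$ is exact (kernels and cokernels are pointwise), and since $P$ is finite $C_k$ is a finite direct sum of such evaluations, hence exact; therefore cohomology in $\ell$ commutes with $C_k$, and since $\Psi_{p_0}$ is left exact (axiom (1)) the injective resolution $\mathscr E^{\bullet}$ computes its derived functors, giving
\begin{equation}
H^{\ell}\bigl(D_k^{\bullet}\bigr)=C_k\Bigl(H^{\ell}\bigl(\Psi_{[\ast]}(\mathscr E^{\bullet})\bigr)\Bigr)=C_k\bigl(\mathbb R^{\ell}\Psi_{[\ast]}(\mathscr M)\bigr).
\end{equation}
Taking the homology of $C_\bullet$ and invoking the fact recalled just after \eqref{Roosho} that $C_\bullet$ computes the left derived functor of $\mathrm{colim}_{p\in P}$, the $E_2$-page of this spectral sequence is $\mathbb L_i\,\mathrm{colim}_{p\in P}\,\mathbb R^{j}\Psi_{[\ast]}(\mathscr M)$, abutting to $H^{j-i}(\mathrm{Tot})$.

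For the second spectral sequence I would take homology first in the $C_\bullet$ direction. Each $\mathscr E^{\ell}$ is injective, so Lemma \ref{L5.2dm} gives that $C_\bullet(\Psi_{[\ast]}(\mathscr E^{\ell}))\to\Psi(\mathscr E^{\ell})\to 0$ is exact; hence $H_k(D_\bullet^{\ell})=0$ for $k>0$ and $H_0(D_\bullet^{\ell})=\Psi(\mathscr E^{\ell})$. The $E_1$-page is concentrated in the single column $k=0$, and since $\Psi$ is left exact, taking $\ell$-cohomology there yields $H^{\ell}\bigl(\Psi(\mathscr E^{\bullet})\bigr)=\mathbb R^{\ell}\Psi(\mathscr M)$. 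This spectral sequence therefore degenerates and identifies $H^{n}(\mathrm{Tot})\cong\mathbb R^{n}\Psi(\mathscr M)$. Feeding this identification of the abutment back into the first spectral sequence produces exactly
\begin{equation}
E_2^{-i,j}=\mathbb L_i\,\mathrm{colim}_{p\in P}\,\mathbb R^{j}\Psi_{[\ast]}(\mathscr M)\ \Longrightarrow\ \mathbb R^{j-i}\Psi(\mathscr M).
\end{equation}

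The substantive input has already been isolated earlier: the hypotheses on $\mathscr S$ and $R$ enter only through Theorem \ref{T3.8bi} and the local computation with axioms (1)--(3), which together yield Lemma \ref{L5.2dm}; once that is in hand, Theorem \ref{T5.3cr} is the formal two-spectral-sequences argument above. The only point that genuinely needs care is the boundedness of the bicomplex guaranteeing convergence of both filtration spectral sequences, and this is precisely why one insists that the set of sums of the ideals $I_{\alpha_1},\dots,I_{\alpha_k}$ — equivalently, the poset $P$ — is finite. I do not expect any further obstacle.
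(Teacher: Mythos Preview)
Your proposal is correct and follows essentially the same approach as the paper: both form the bicomplex $C_\bullet(\Psi_{[\ast]}(\mathscr E^\bullet))$ from an injective resolution, use finiteness of $P$ for convergence, invoke Lemma \ref{L5.2dm} to collapse one spectral sequence and identify the abutment as $\mathbb R^\bullet\Psi(\mathscr M)$, and read off the stated $E_2$-page from the other. Your write-up is slightly more explicit about why $C_k$ commutes with taking cohomology in the resolution direction, but the argument is the same.
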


\begin{proof}
For $\mathscr M\in \mathscr S_R$, we consider an injective resolution $0\longrightarrow \mathscr M\longrightarrow \mathscr E^\bullet$. We now consider the following bicomplex $D^{\bullet,\bullet}$ that is concentrated in the ``second quadrant.''
\begin{equation}\label{DDCC}
\xymatrix{
&& &&  && \\
\dots\ar[rr] && C_1(\Psi_{[\ast]}(\mathscr E^1)) \ar[rr] \ar[u]&& C_0(\Psi_{[\ast]}(\mathscr E^1)) \ar[rr]\ar[u] && 0\\
\dots\ar[rr] && C_1(\Psi_{[\ast]}(\mathscr E^0)) \ar[rr] \ar[u] && C_0(\Psi_{[\ast]}(\mathscr E^0))\ar[u] \ar[rr] && 0}
\end{equation} Here, we write $D^{-l,k}:=C_l(\Psi_{[\ast]}(\mathscr E^k))$ to faciliate cohomological notation. 
As in \cite[Theorem 4.6]{Alz}, we can now consider the two spectral sequences associated to the first and second filtrations of this bicomplex. Since the $l$-th column of this complex is given by $D^{-l,\bullet}=C_l(\Psi_{[\ast]}(\mathscr E^\bullet))$ and $P$ is finite, the columns of this bicomplex vanish for $l\gg 0$ and hence both spectral sequences converge. Further, by Lemma \ref{L5.2dm}, the rows of this bicomplex are exact up to the $0$-th position, and the cohomologies at the $0$-th position are given by $\Psi(\mathscr E^\bullet)$. It follows that the common abutment of these two spectral sequences is given by the cohomology groups of the complex
\begin{equation}\label{5.7gb}
0\longrightarrow  \Psi(\mathscr E^0)\longrightarrow \Psi(\mathscr E^1) \longrightarrow \dots
\end{equation} Since $0\longrightarrow \mathscr M\longrightarrow \mathscr E^\bullet$ is an injective resolution, it clear that the cohomologies of \eqref{5.7gb} are given by $\mathbb R^\bullet\Psi(\mathscr M)$. Taking the cohomology of the columns of \eqref{DDCC}, one obtains $E_1^{-i,j}=C_i(\mathbb R^j\Psi_{[\ast]}(\mathscr M))$. Now applying the cohomology of the rows of \eqref{DDCC}, we finally obtain the spectral sequence
\begin{equation}
E_2^{-i,j}=\mathbb L_i\text{colim}_{p\in P}\mathbb R^j\Psi_{[\ast]}(\mathscr M)\Longrightarrow \mathbb R^{j-i}\Psi(\mathscr M)
\end{equation} This proves the result.
\end{proof}

In the following three subsections, we will now apply the formalism above to three separate contexts in order to obtain spectral sequences of derived functors. The first of these will be the local cohomology objects in $\mathscr S_R$ with respect to a pair of ideals $(I,J)$.

\subsection{Spectral sequences for $\Gamma_{I,J}:\mathscr S_R\longrightarrow \mathscr S_R$}

We continue with the ideal $I=I_{\alpha_1}\cap ... \cap I_{\alpha_k}$ and the partially ordered set $P$ consisting of all possible sums of the ideals $\{I_{\alpha_1},...,I_{\alpha_k}\}$ as defined above. For an ideal $J\subseteq R$ and $p\in P$, we set $\Psi_p(\_\_):=\Gamma_{I_p,J}(\_\_)$ and $\Psi:=\Gamma_{I,J}$. 

\begin{lem}\label{L5.04xl0}
Let $K$, $J\subseteq R$ be ideals. Then, the functor $\Gamma_{K,J}$ preserves direct sums.
\end{lem}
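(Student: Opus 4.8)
The plan is to show directly that for any family $\{\mathscr M_i\}_{i\in I}$ in $\mathscr S_R$, the canonical monomorphism $\bigoplus_{i\in I}\Gamma_{K,J}(\mathscr M_i)\longrightarrow \Gamma_{K,J}\bigl(\bigoplus_{i\in I}\mathscr M_i\bigr)$ is in fact an isomorphism. The inclusion is clear from Lemma \ref{L4.1qu}(b), since $\Gamma_{K,J}$ is a functor and each summand $\mathscr M_i$ includes into $\bigoplus_{i}\mathscr M_i$, so $\Gamma_{K,J}(\mathscr M_i)=\Gamma_{K,J}(\mathscr M_i)\cap\Gamma_{K,J}\bigl(\bigoplus_i\mathscr M_i\bigr)$ inside $\bigoplus_i\mathscr M_i$ and these images are independent. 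The content is surjectivity, i.e.\ that every finitely generated subobject of $\Gamma_{K,J}\bigl(\bigoplus_i\mathscr M_i\bigr)$ already lands in the direct sum of the torsion parts.

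First I would take an arbitrary $\mathscr N\in|\Gamma_{K,J}(\bigoplus_i\mathscr M_i)|$, so $\mathscr N\subseteq\bigoplus_i\mathscr M_i$ is finitely generated with $K^n\subseteq Ann(\mathscr N)+J$ for $n\gg1$. Because $\mathscr S_R$ is locally noetherian and finite limits commute with filtered colimits, a finitely generated subobject of a direct sum is contained in a finite subsum: there is a finite subset $F\subseteq I$ with $\mathscr N\subseteq\bigoplus_{i\in F}\mathscr M_i$. Now for each $i\in F$ let $\mathscr N_i$ denote the image of $\mathscr N$ under the projection $\bigoplus_{i\in F}\mathscr M_i\twoheadrightarrow\mathscr M_i$; each $\mathscr N_i$ is finitely generated (a quotient of $\mathscr N$), and $Ann(\mathscr N)\subseteq Ann(\mathscr N_i)$ since quotient maps only enlarge annihilators. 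Hence $K^n\subseteq Ann(\mathscr N_i)+J$ for the same $n$, so $\mathscr N_i\in|\Gamma_{K,J}(\mathscr M_i)|$, giving $\mathscr N_i\subseteq\Gamma_{K,J}(\mathscr M_i)$. Since $\mathscr N\subseteq\bigoplus_{i\in F}\mathscr N_i\subseteq\bigoplus_{i\in F}\Gamma_{K,J}(\mathscr M_i)\subseteq\bigoplus_{i\in I}\Gamma_{K,J}(\mathscr M_i)$, we conclude $\Gamma_{K,J}\bigl(\bigoplus_i\mathscr M_i\bigr)=\sum|\Gamma_{K,J}(\bigoplus_i\mathscr M_i)|\subseteq\bigoplus_{i\in I}\Gamma_{K,J}(\mathscr M_i)$, which finishes the argument.

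The only mildly delicate point — and the one I would write out carefully rather than wave at — is the claim that a finitely generated subobject of $\bigoplus_{i\in I}\mathscr M_i$ factors through a finite subsum. This is exactly where local noetherianity (or at least local finite generation) of $\mathscr S_R$ is used: writing $\bigoplus_{i\in I}\mathscr M_i$ as the filtered colimit of its finite subsums $\bigoplus_{i\in F}\mathscr M_i$ over finite $F\subseteq I$, and noting that the inclusion of a finitely generated object into such a filtered colimit factors through one of the stages (the same fact already invoked in the proof of Theorem \ref{T3.8bi} and Lemma \ref{L2.13rdv}). Everything else is a routine manipulation of annihilators and the defining formula \eqref{4.1dn}.
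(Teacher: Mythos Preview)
Your proof is correct and follows essentially the same route as the paper's: both arguments note the easy inclusion $\bigoplus_i\Gamma_{K,J}(\mathscr M_i)\subseteq\Gamma_{K,J}(\bigoplus_i\mathscr M_i)$, then for the reverse take a finitely generated $\mathscr N$ in the right-hand side, use finite generation to land it in a finite subsum, project to each factor (annihilators only grow under quotients), and conclude $\mathscr N\subseteq\bigoplus_i\Gamma_{K,J}(\mathscr M_i)$. The paper is terser about why a finitely generated subobject of a direct sum sits in a finite subsum, whereas you spell this out explicitly via the filtered-colimit description; this extra care is fine and does not change the argument.
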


\begin{proof}
We consider a collection $\{\mathscr M_\beta\}_{\beta\in B}$ of objects in $\mathscr S_R$ and set $\mathscr M:=\underset{\beta\in B}{\bigoplus}\mathscr M_\beta$. From the definition in \eqref{4.1dn}, it is evident that $\Gamma_{K,J}(\mathscr M)\supseteq \underset{\beta\in B}{\bigoplus}\Gamma_{K,J}(\mathscr M_\beta)$. On the other hand, we consider $\mathscr N\in fg(\mathscr M)$ such that $K^n\subseteq Ann(\mathscr N)+J$ for $n\gg 1$. Since $\mathscr N$ is finitely generated, we must have $\mathscr N\subseteq \mathscr M_1 \oplus ...\oplus
\mathscr M_r$ for some finite subcollection $\{\mathscr M_1,...,\mathscr M_r\}$ of objects from $\{\mathscr M_\beta\}_{\beta\in B}$. Let $\rho_l:  \mathscr M_1 \oplus ...\oplus
\mathscr M_r \longrightarrow \mathscr M_l$ denote the canonical projections for $1\leq l\leq r$. We now note that
\begin{equation}
\mbox{$K^n\subseteq Ann(\mathscr N)+J\subseteq Ann(\rho_l(\mathscr N))+J$ for $n\gg 1$}\qquad\Rightarrow \qquad \rho_l(\mathscr N)\subseteq \Gamma_{K,J}(\mathscr M_l)
\end{equation}  for each $1\leq l\leq r$. We now note that $\mathscr N\subseteq \rho_1(\mathscr N)\oplus ...\oplus \rho_r(\mathscr N)\subseteq  \mathscr M_1 \oplus ...\oplus
\mathscr M_r$, which gives $\mathscr N\subseteq  {\bigoplus}_{l=1}^r\Gamma_{K,J}(\mathscr M_l)\subseteq \underset{\beta\in B}{\bigoplus}\Gamma_{K,J}(\mathscr M_\beta)$.
\end{proof}

\begin{lem}\label{L5.4xl}
Let $\mathscr L\in \mathscr S_R$ be an $R$-elementary object and let $\mathfrak p:=Ann(\mathscr L)$. Let $K\subseteq R$ be an ideal. If $\mathfrak{p}\notin \mathbb W(K,J)$, then we have $\Gamma_{K,J}(\mathscr E(\mathscr L))=0$. 
\end{lem}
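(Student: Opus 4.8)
The plan is to reduce everything to the associated-prime calculus from Sections~2 and~3, so that no direct computation with the localizations $\mathscr E(\mathscr L)_{\mathfrak m}$ is needed. The key structural fact is that $\Gamma_{K,J}(\mathscr E(\mathscr L))$ is a subobject of $\mathscr E(\mathscr L)$; hence by Proposition~\ref{P3.3} (a non-zero object has non-empty $Ass$) it suffices to show that $Ass(\Gamma_{K,J}(\mathscr E(\mathscr L)))=\phi$.

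First I would compute $Ass(\mathscr E(\mathscr L))$. Since $\mathscr L$ is $R$-elementary with $Ann(\mathscr L)=\mathfrak p$, condition~(b) of Definition~\ref{D3.1} forces every non-zero subobject of $\mathscr L$, and in particular every $R$-elementary subobject of $\mathscr L$, to have annihilator exactly $\mathfrak p$; and $\mathscr L$ itself witnesses $\mathfrak p\in Ass(\mathscr L)$. Thus $Ass(\mathscr L)=\{\mathfrak p\}$. The inclusion $\mathscr L\hookrightarrow\mathscr E(\mathscr L)$ is essential by definition of the injective hull, so Corollary~\ref{C3.7} gives $Ass(\mathscr E(\mathscr L))=Ass(\mathscr L)=\{\mathfrak p\}$.

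Next I would apply Proposition~\ref{P4.3ed} to the ideal pair $(K,J)$ and the object $\mathscr E(\mathscr L)$, obtaining
\[
Ass(\Gamma_{K,J}(\mathscr E(\mathscr L)))=Ass(\mathscr E(\mathscr L))\cap\mathbb W(K,J)=\{\mathfrak p\}\cap\mathbb W(K,J).
\]
The hypothesis is exactly $\mathfrak p\notin\mathbb W(K,J)$, so this intersection is empty; hence $Ass(\Gamma_{K,J}(\mathscr E(\mathscr L)))=\phi$ and therefore $\Gamma_{K,J}(\mathscr E(\mathscr L))=0$ by Proposition~\ref{P3.3}.

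There is essentially no obstacle here; the only step that needs a little care is the identification $Ass(\mathscr E(\mathscr L))=\{\mathfrak p\}$, which combines the essentiality of $\mathscr L$ inside its injective hull with the rigidity built into the notion of an $R$-elementary object. Once that is in hand, the conclusion is an immediate consequence of Propositions~\ref{P3.3} and~\ref{P4.3ed}.
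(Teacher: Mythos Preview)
Your proof is correct and follows essentially the same approach as the paper: compute $Ass(\mathscr E(\mathscr L))=\{\mathfrak p\}$ via Corollary~\ref{C3.7}, apply Proposition~\ref{P4.3ed} to get $Ass(\Gamma_{K,J}(\mathscr E(\mathscr L)))=\{\mathfrak p\}\cap\mathbb W(K,J)=\phi$, and conclude via Proposition~\ref{P3.3}. The paper's version is slightly terser but the logic is identical.
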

\begin{proof}
Since $\mathscr L\subseteq\mathscr E(\mathscr L)$ is an essential subobject, it follows by Corollary \ref{C3.7} that  $Ass(\mathscr E(\mathscr L))=Ass(\mathscr L)=\{\mathfrak p\}$. We are given $\mathfrak{p}\notin \mathbb W(K,J)$.  By Proposition \ref{P4.3ed}, we now have $Ass(\Gamma_{K,J}(\mathscr E(\mathscr L)))=Ass(\mathscr E(\mathscr L))\cap \mathbb W(K,J)=\phi$. It follows that $\Gamma_{K,J}(\mathscr E(\mathscr L))=0$. 
\end{proof}

\begin{lem}\label{L5.5hx}
Let $\mathscr L\in \mathscr S_R$ be an $R$-elementary object and let $\mathfrak p:=Ann(\mathscr L)$. Let $K\subseteq R$ be an ideal. 

\smallskip
(a) Suppose that $\mathfrak{p}\in \mathbb W(K,J)$. Then, we have $\Gamma_{K,J}(\mathscr E(\mathscr L))=\mathscr E(\mathscr L)$. 

\smallskip
(b) If $\mathfrak p\not\subseteq \mathfrak m$ for some maximal ideal $\mathfrak m$, then $\mathscr E(\mathscr L)_{\mathfrak m}=0$. 
\end{lem}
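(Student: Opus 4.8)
The plan is to handle the two parts separately, using the torsion-theoretic results of Section~3 for part (a) and a support computation for part (b).

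For part (a), I would first show that the $R$-elementary object $\mathscr L$ is itself $(K,J)$-torsion, and then transfer this to its injective hull. Since $\mathscr L$ is $R$-elementary, every non-zero subobject $\mathscr N\subseteq\mathscr L$ satisfies $Ann(\mathscr N)=\mathfrak p$ by Definition~\ref{D3.1}; combined with the hypothesis $\mathfrak p\in\mathbb W(K,J)$, this gives $K^n\subseteq\mathfrak p+J=Ann(\mathscr N)+J$ for $n\gg 1$, so every finitely generated subobject of $\mathscr L$ lies in $|\Gamma_{K,J}(\mathscr L)|$. As $\mathscr S_R$ is locally noetherian, $\mathscr L$ is the sum of its finitely generated subobjects, whence $\Gamma_{K,J}(\mathscr L)=\mathscr L$. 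With $\mathscr L$ now known to be $(K,J)$-torsion, Lemma~\ref{L4.7x4} shows that $\mathscr E(\mathscr L)$ is also $(K,J)$-torsion, i.e. $\Gamma_{K,J}(\mathscr E(\mathscr L))=\mathscr E(\mathscr L)$.

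For part (b), I would argue entirely on the level of supports. Since $\mathscr L$ is finitely generated, Proposition~\ref{L3.8uh} gives $Supp(\mathscr L)=\mathbb V(Ann(\mathscr L))=\mathbb V(\mathfrak p)$; the assumption $\mathfrak p\not\subseteq\mathfrak m$ says exactly that $\mathfrak m\notin\mathbb V(\mathfrak p)$, so $\mathscr L_{\mathfrak m}=0$. Since the inclusion $\mathscr L\subseteq\mathscr E(\mathscr L)$ is essential, Proposition~\ref{P2.15cwi} gives $Supp(\mathscr E(\mathscr L))=Supp(\mathscr L)$, and therefore $\mathscr E(\mathscr L)_{\mathfrak m}=0$.

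I do not expect a serious obstacle here: part (a) is essentially an application of the hereditary torsion theory $(\mathscr T(\mathscr S)_{K,J},\mathscr F(\mathscr S)_{K,J})$ together with the fact (Lemma~\ref{L4.7x4}, Proposition~\ref{P4.10gst}) that torsion parts of injectives are injective, while part (b) is a routine support computation. The only point to verify is that those Section~3 results, stated for a general pair of ideals, apply verbatim with $(K,J)$ in place of $(I,J)$, which they do.
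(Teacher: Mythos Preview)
Your proposal is correct and follows essentially the same route as the paper: for (a) you show $\mathscr L$ is $(K,J)$-torsion and invoke Lemma~\ref{L4.7x4}, and for (b) you compute $Supp(\mathscr L)=\mathbb V(\mathfrak p)$ and pass to the injective hull. The only cosmetic differences are that in (a) the paper observes directly that $\mathscr L$ itself lies in $|\Gamma_{K,J}(\mathscr L)|$ (since $\mathscr L$ is already finitely generated), and in (b) the paper cites Proposition~\ref{P2.14ju} rather than its corollary Proposition~\ref{P2.15cwi}.
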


\begin{proof}
 (a) Since $\mathfrak p=Ann(\mathscr L)$ and $\mathfrak{p}\in \mathbb W(K,J)$, we have $K^n\subseteq Ann(\mathscr L)+J$ for $n\gg 1$. It follows from the definition in \eqref{4.1dn} that $\mathscr L$ is $(K,J)$-torsion. By Lemma \ref{L4.7x4}, it follows that the injective hull $\mathscr E(\mathscr L)$ is also $(K,J)$-torsion, i.e., $\Gamma_{K,J}(\mathscr E(\mathscr L))=\mathscr E(\mathscr L)$.  
 
 \smallskip
(b)  Since $\mathscr L$ is finitely generated, we know that $Supp(\mathscr L)=\mathbb V(Ann(\mathscr L))=\mathbb V(\mathfrak p)$. Accordingly, if $\mathfrak p\not\subseteq \mathfrak m$, then $\mathscr L_{\mathfrak m}=0$. By Proposition \ref{P2.14ju}, we know that $\mathscr E(\mathscr L)_{\mathfrak m}$ is the injective hull of $\mathscr L_{\mathfrak m}$, and hence $\mathscr E(\mathscr L)_{\mathfrak m}=0$. 
\end{proof}

\begin{thm}\label{P5.6fq}  For any $\mathscr M\in \mathscr S_R$, we have a spectral sequence
\begin{equation}
E_2^{-i,j}=\mathbb L_i\text{colim}_{p\in P}  H^j_{[\ast],J}(\mathscr M)\Longrightarrow H^{j-i}_{I,J}(\mathscr M)
\end{equation}

\end{thm}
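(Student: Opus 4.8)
The plan is to obtain this as a direct instance of the general spectral sequence of Theorem \ref{T5.3cr}, applied to the system of functors $\Psi_{[\ast]}$ given on objects of $\hat{P}$ by $\Psi_p := \Gamma_{I_p,J}$, with the convention $I_{1_{\hat{P}}} := I$, so that $\Psi = \Psi_{1_{\hat{P}}} = \Gamma_{I,J}$ and, by Definition \ref{D4.4fv}, $\mathbb R^j\Psi_p = H^j_{I_p,J}$. It then remains only to verify that this $\Psi_{[\ast]}$ satisfies conditions (1), (2) and (3) of the axiomatic setup preceding Theorem \ref{T5.3cr}.

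First I would dispose of condition (1): left exactness of each $\Gamma_{I_p,J}$ is Lemma \ref{L4.1qu}(b), and preservation of direct sums is Lemma \ref{L5.04xl0}. For the natural transformations $\Psi_p \to \Psi_q$ attached to $p \leq q$, note that $p \leq q$ in $P$ (reverse inclusion) means $I_q \subseteq I_p$, whence $I_q^n \subseteq I_p^n$ and so $\mathbb W(I_p,J) \subseteq \mathbb W(I_q,J)$; using the description of $\Gamma_{K,J}(\mathscr M)$ after Lemma \ref{L4.2wo} as the sum of all $\mathscr N \in fg(\mathscr M)$ with $Supp(\mathscr N) \subseteq \mathbb W(K,J)$, this yields a natural inclusion $\Gamma_{I_p,J}(\mathscr M) \subseteq \Gamma_{I_q,J}(\mathscr M)$. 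The case $q = 1_{\hat{P}}$ is covered as well since $I = I_{\alpha_1}\cap\cdots\cap I_{\alpha_k} \subseteq I_p$ for every $p \in P$.

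Next I would verify conditions (2) and (3) by feeding in the computations of Section 5.1. Fix an $R$-elementary object $\mathscr L$ with $\mathfrak p := Ann(\mathscr L)$; by Corollary \ref{C3.7} we have $Ass(\mathscr E(\mathscr L)) = Ass(\mathscr L) = \{\mathfrak p\}$. Put $\mathscr X(\mathscr L,\mathfrak m) := \mathscr E(\mathscr L)_{\mathfrak m}$ and $\mathscr Y(\mathscr L,\mathfrak m) := 0$, which depend only on $\mathscr L$ and $\mathfrak m$. If $\mathfrak p \not\subseteq \mathfrak m$, then $\mathscr E(\mathscr L)_{\mathfrak m} = 0$ by Lemma \ref{L5.5hx}(b), hence $\Gamma_{I_p,J}(\mathscr E(\mathscr L))_{\mathfrak m} = 0$ since $\Gamma_{I_p,J}(\mathscr E(\mathscr L)) \subseteq \mathscr E(\mathscr L)$ and localization is exact. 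If $\mathfrak p \subseteq \mathfrak m$ and $\mathfrak p \in \mathbb W(I_p,J)$, then $\Gamma_{I_p,J}(\mathscr E(\mathscr L)) = \mathscr E(\mathscr L)$ by Lemma \ref{L5.5hx}(a), so its localization at $\mathfrak m$ is $\mathscr X(\mathscr L,\mathfrak m)$. If $\mathfrak p \subseteq \mathfrak m$ but $\mathfrak p \notin \mathbb W(I_p,J)$, then $\Gamma_{I_p,J}(\mathscr E(\mathscr L)) = 0 = \mathscr Y(\mathscr L,\mathfrak m)$ by Lemma \ref{L5.4xl}. This establishes \eqref{eexxyy}. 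For \eqref{ey5.4c}, take $p \leq q$, so that $\mathbb W(I_p,J) \subseteq \mathbb W(I_q,J)$: when $\mathfrak p \subseteq \mathfrak m$ and $\mathfrak p \in \mathbb W(I_p,J)$ we also have $\mathfrak p \in \mathbb W(I_q,J)$, so both $\Gamma_{I_p,J}(\mathscr E(\mathscr L))$ and $\Gamma_{I_q,J}(\mathscr E(\mathscr L))$ equal $\mathscr E(\mathscr L)$ and the transition map, being the inclusion, localizes to $id_{\mathscr X(\mathscr L,\mathfrak m)}$; in every remaining case the localization of the source is $0$, forcing the map to be $0$, which agrees with \eqref{ey5.4c} since $id_{\mathscr Y(\mathscr L,\mathfrak m)} = 0$.

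Finally, with conditions (1)--(3) in hand, Theorem \ref{T5.3cr} outputs the spectral sequence $E_2^{-i,j} = \mathbb L_i\text{colim}_{p\in P}\mathbb R^j\Psi_{[\ast]}(\mathscr M) \Longrightarrow \mathbb R^{j-i}\Psi(\mathscr M)$, and substituting $\mathbb R^j\Psi_p = H^j_{I_p,J}$ (the direct system denoted $H^j_{[\ast],J}$) together with $\mathbb R^{j-i}\Psi = H^{j-i}_{I,J}$ gives the claim. I expect the only delicate point to be the bookkeeping around condition (3) --- making sure $\mathscr X(\mathscr L,\mathfrak m)$ and $\mathscr Y(\mathscr L,\mathfrak m)$ are genuinely independent of $p$ and that each prescribed transition map in \eqref{ey5.4c} is exactly the one induced by the inclusions of torsion subobjects --- but this requires no ingredient beyond the lemmas already proved in this section.
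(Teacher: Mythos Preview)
Your proposal is correct and follows essentially the same approach as the paper: the paper's proof simply states that the result follows directly from Lemmas \ref{L5.04xl0}, \ref{L5.4xl}, \ref{L5.5hx} and Theorem \ref{T5.3cr}, and you have carried out exactly this verification in detail, with the choices $\mathscr X(\mathscr L,\mathfrak m)=\mathscr E(\mathscr L)_{\mathfrak m}$ and $\mathscr Y(\mathscr L,\mathfrak m)=0$.
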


\begin{proof}
This follows directly from Lemmas \ref{L5.04xl0}, \ref{L5.4xl},   \ref{L5.5hx} and Theorem \ref{T5.3cr}. 
\end{proof}

\subsection{Generalized local cohomology objects on $\mathscr S_R$}

  Let $V$ be an $R$-module. By \cite[$\S$ B4]{AZ}, we know that $\otimes_RV:\mathscr S_R\longrightarrow \mathscr S_R$ has a right adjoint $\underline{Hom}_R(V,\_\_)$. The bifunctor 
$\underline{Hom}_R(\_\_,\_\_):(R-Mod)^{op}\times \mathscr S_R\longrightarrow \mathscr S_R$ is left exact in both variables and satisfies
$\underline{Hom}_R(R,\mathscr M)=\mathscr M$ for any $\mathscr M\in \mathscr S_R$. The right derived functors of $\underline{Hom}_R(V,\_\_)$ are denoted by
$\underline{Ext}^\bullet_R(V,\_\_)$.  If $V$, $W$ are $R$-modules, it also follows from the adjunction that 
$\underline{Hom}_R(V\otimes_RW,\_\_)\cong \underline{Hom}_R(V,\underline{Hom}_R(W,\_\_))$. 

\smallskip
Given an $R$-module $V$ and any ideal $K\subseteq R$, we denote by $V_K$ the inverse system given by $V_K:=\{V/K^tV\}_{t\geq 1}$. We now set
\begin{equation}\label{cr510g}
\Gamma_{V_K}:\mathscr S_R \longrightarrow \mathscr S_R \qquad \mathscr M\mapsto \underset{t\geq 1}{\varinjlim}\textrm{ }\underline{Hom}_R(V/K^tV,\mathscr M)
\end{equation} It is immediate from \eqref{cr510g} that $\Gamma_{V_K}$ is left exact and we denote by $H^\bullet_{V_K}(\_\_)$ the right derived functors
of $\Gamma_{V_K}$. 

 \begin{thm}\label{crP4.3} Let $V$ be an $R$-module and let $K\subseteq R$ be an ideal. Then, for any $\mathscr M\in \mathscr S_R$, we have 
 \begin{equation}
 H^i_{V_K}(\mathscr M)=\underset{t\geq 1}{\varinjlim}\textrm{ }\underline{Ext}^i_R(V/K^tV,\mathscr M) \qquad i\geq 0
 \end{equation}
 \end{thm}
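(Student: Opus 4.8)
The plan is to show that the functor $\Gamma_{V_K}$ of \eqref{cr510g} sends injectives to $\Gamma_{V_K}$-acyclic objects for the purpose of computing derived functors via the filtered colimit, and then to commute the derived functor with the filtered colimit. More precisely, the strategy has three steps. First, I would recall that for each fixed $t\geq 1$ the right derived functors of $\underline{Hom}_R(V/K^tV,\_\_)$ are $\underline{Ext}^\bullet_R(V/K^tV,\_\_)$ by definition (stated in the text just before the statement). Second, I would invoke the fact that $\mathscr S_R$ is a Grothendieck category which is, moreover, locally noetherian, so that filtered colimits are exact and, crucially, a filtered colimit of injective objects is again injective (this is exactly the property used repeatedly earlier, e.g.\ in the proof of Theorem \ref{T3.8bi} and in Lemma \ref{L4.85fq}); hence the functor $\varinjlim_{t}$ on the inverse system $\{V/K^tV\}$ — which contravariantly is a filtered colimit of copies — is exact and preserves the relevant injectivity. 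Third, given an injective resolution $0\to\mathscr M\to\mathscr E^\bullet$, I would compute $H^i_{V_K}(\mathscr M)$ as the cohomology of $\Gamma_{V_K}(\mathscr E^\bullet)=\varinjlim_t\underline{Hom}_R(V/K^tV,\mathscr E^\bullet)$, and since $\varinjlim_t$ is exact it commutes with taking cohomology, giving
\[
H^i_{V_K}(\mathscr M)=\varinjlim_t H^i\bigl(\underline{Hom}_R(V/K^tV,\mathscr E^\bullet)\bigr)=\varinjlim_t\underline{Ext}^i_R(V/K^tV,\mathscr M),
\]
where the last equality uses that each $\mathscr E^j$ is injective in $\mathscr S_R$ and therefore computes $\underline{Ext}^\bullet_R(V/K^tV,\_\_)$.

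The one genuine point that needs care — and which I expect to be the main obstacle — is justifying the last equality: that an injective resolution $\mathscr E^\bullet$ of $\mathscr M$ in $\mathscr S_R$ can be used to compute $\underline{Ext}^\bullet_R(W,\mathscr M)$ for every $R$-module $W$ simultaneously. This is the standard fact that $\underline{Hom}_R(W,\_\_)$, being the right adjoint of the right-exact functor $\_\_\otimes_R W$, sends injectives of $\mathscr S_R$ to $\underline{Hom}_R(W,\_\_)$-acyclic (indeed injective, when $\_\_\otimes_R W$ is exact, but acyclicity is all we need) objects; equivalently, $\underline{Ext}^i_R(W,\mathscr E^j)=0$ for $i>0$ and $\mathscr E^j$ injective, so $\underline{Ext}^\bullet_R(W,\mathscr M)=H^\bullet(\underline{Hom}_R(W,\mathscr E^\bullet))$. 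One should check this is available in the Artin–Zhang framework: since $\underline{Hom}_R(W,\_\_)$ is right adjoint to $\_\_\otimes_R W$, it preserves injectives whenever $\_\_\otimes_R W$ preserves monomorphisms, but in general one argues via a flat resolution of $W$ and the balancing of $\underline{Ext}$, or simply notes that any additive right adjoint preserves injective objects (adjoint of an exact functor) after replacing $\_\_\otimes_R W$ by an exact functor on a suitable resolution — in any case this is the only place a small lemma is needed.

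Having dispatched that, the remainder is formal: exactness of filtered colimits in the Grothendieck category $\mathscr S_R$ (part of the definition) lets one pull $\varinjlim_t$ outside the cohomology, and since $\Gamma_{V_K}=\varinjlim_t\underline{Hom}_R(V/K^tV,\_\_)$ by \eqref{cr510g}, the identification is complete. I would also remark, as a sanity check on indexing, that the transition maps in the inverse system $V_K=\{V/K^tV\}_{t\geq1}$ are the surjections $V/K^{t+1}V\twoheadrightarrow V/K^tV$, so applying the contravariant $\underline{Hom}_R(\_\_,\mathscr M)$ turns these into a genuine filtered direct system, legitimizing the colimit; and for $i=0$ the formula recovers the definition \eqref{cr510g} of $\Gamma_{V_K}$, so the statement is consistent at the bottom of the resolution. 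No new tools beyond local noetherianness of $\mathscr S_R$ and the adjunction $(\_\_\otimes_R V,\underline{Hom}_R(V,\_\_))$ from \cite[\S\,B4]{AZ} are required.
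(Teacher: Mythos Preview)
Your argument is correct, and in fact the ``main obstacle'' you flag is not an obstacle at all: in this paper $\underline{Ext}^\bullet_R(W,\_\_)$ is \emph{defined} as the right derived functor of $\underline{Hom}_R(W,\_\_)$ (see the paragraph preceding \eqref{cr510g}), so the identity $\underline{Ext}^i_R(V/K^tV,\mathscr M)=H^i\bigl(\underline{Hom}_R(V/K^tV,\mathscr E^\bullet)\bigr)$ for an injective resolution $\mathscr E^\bullet$ of $\mathscr M$ is immediate and requires no balancing argument or flat resolutions of $W$. Likewise, the remark about filtered colimits of injectives being injective is irrelevant here; all you need is exactness of filtered colimits in the Grothendieck category $\mathscr S_R$, which you correctly invoke.

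The paper's proof takes a slightly different route: rather than computing directly on an injective resolution, it packages both sides as cohomological $\delta$-functors, checks they agree in degree $0$, and shows the right-hand side is effaceable (since $\underline{Ext}^i_R(V/K^tV,\mathscr E)=0$ for $i>0$ and $\mathscr E$ injective, hence the filtered colimit vanishes too), whence universal. The two arguments are formally equivalent---yours unwinds the universality proof on a fixed resolution---but the $\delta$-functor phrasing has the mild advantage of making the role of exactness of filtered colimits (needed to get a $\delta$-functor) and of effaceability (needed for universality) cleanly separated, whereas your version combines them in a single cohomology-commutes-with-colimit step.
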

 \begin{proof}
 For the sake of convenience, we set $F^i(\mathscr M):=\underset{t\geq 1}{\varinjlim}\textrm{ }\underline{Ext}^i_R(V/K^tV,\mathscr M)$ for any $\mathscr M\in
 \mathscr S_R$. From  the defintion in 
 \ref{cr510g}, it follows that $H^0_{V_K}(\mathscr M)=F^0(\mathscr M)$. The derived functors $\{H^\bullet_{V_K}(\_\_)\}$ give a family of cohomological $\delta$-functors that is universal. Similarly, for each $t\geq 1$, the derived functors $\{\underline{Ext}^\bullet_R(V/K^tV,\_\_)\}$ are universal $\delta$-functors on $\mathscr S_R$. Since filtered colimits in 
 $\mathscr S_R$ are also exact, we see that $\{F^\bullet\}$ is also a family of $\delta$-functors. For any injective $\mathscr E\in \mathscr S_R$, the derived functor $\underline{Ext}^i_R(V/K^tV,\mathscr E)$ vanishes for $i>0$ and hence so does $F^i(\mathscr E)$. It follows that $\{F^\bullet\}$ is a universal $\delta$-functor and since $F^0=H^0_{V_K}$, we must have
 $F^i=H^i_{V_K}$ for every $i$. 
 \end{proof}

In order to understand the functor $\Gamma_{V_K}$ better, we define, for any ideal $K\subseteq R$ and 
$\mathscr M\in \mathscr S_R$
\begin{equation}\label{creq4.1}
\gamma_K(\mathscr M):=\underset{a\in K}{\bigcap}\textrm{ }Ker(a_{\mathscr M}:\mathscr M\longrightarrow \mathscr M)=\underset{i=1}{\overset{n}{\bigcap}}\textrm{ }Ker(a_{i,\mathscr M}:\mathscr M\longrightarrow \mathscr M)
\end{equation} where $\{a_1,...,a_n\}$ is a set of generators for $K$. 

\begin{lem}\label{crL4.1}
Let $K\subseteq R$ be an ideal and $\mathscr M\in \mathscr S_R$. Then, $\gamma_K(\mathscr M)=\underline{Hom}_R(R/K,\mathscr M)$. 
\end{lem}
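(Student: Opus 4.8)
The plan is to exhibit a natural isomorphism $\gamma_K(\mathscr M)\xrightarrow{\ \sim\ }\underline{Hom}_R(R/K,\mathscr M)$ by using the presentation of $R/K$ as a cokernel. First I would fix a finite set of generators $\{a_1,\dots,a_n\}$ for $K$, so that there is an exact sequence of $R$-modules $R^n\xrightarrow{(a_1,\dots,a_n)} R\longrightarrow R/K\longrightarrow 0$. Applying the left exact functor $\underline{Hom}_R(\_\_,\mathscr M):(R\text{-Mod})^{op}\times\mathscr S_R\longrightarrow\mathscr S_R$ to this presentation and using $\underline{Hom}_R(R,\mathscr M)=\mathscr M$ together with the fact that $\underline{Hom}_R(\_\_,\mathscr M)$ sends finite direct sums in the first variable to finite products (hence $\underline{Hom}_R(R^n,\mathscr M)=\mathscr M^n$), we obtain a left exact sequence in $\mathscr S_R$
\begin{equation*}
0\longrightarrow \underline{Hom}_R(R/K,\mathscr M)\longrightarrow \mathscr M\xrightarrow{\ (a_{1,\mathscr M},\dots,a_{n,\mathscr M})\ }\mathscr M^n.
\end{equation*}
Thus $\underline{Hom}_R(R/K,\mathscr M)$ is the kernel of the map $\mathscr M\to\mathscr M^n$ whose $i$-th component is $a_{i,\mathscr M}$.

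The second step is simply to identify that kernel with $\gamma_K(\mathscr M)$. The kernel of a map into a finite product is the intersection of the kernels of the components, so $Ker\big(\mathscr M\to\mathscr M^n\big)=\bigcap_{i=1}^n Ker(a_{i,\mathscr M})$, which is by definition $\gamma_K(\mathscr M)$ as written in \eqref{creq4.1}. This already gives the isomorphism as subobjects of $\mathscr M$; naturality in $\mathscr M$ is immediate since every map in sight is induced by functoriality of $\underline{Hom}_R(\_\_,\_\_)$ in the second variable.

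A point worth addressing carefully — and the only place where something could go wrong — is the claim in \eqref{creq4.1} that $\bigcap_{a\in K}Ker(a_{\mathscr M})=\bigcap_{i=1}^n Ker(a_{i,\mathscr M})$, i.e. that the intersection over all of $K$ coincides with the intersection over a generating set. The inclusion $\subseteq$ is trivial. For $\supseteq$, note that for any $r\in R$ the endomorphism $(ra)_{\mathscr M}$ equals $r_{\mathscr M}\circ a_{\mathscr M}$, since $\rho_{\mathscr M}:R\to\mathscr S(\mathscr M,\mathscr M)$ is a $k$-algebra homomorphism; hence $Ker(a_{i,\mathscr M})\subseteq Ker((r a_i)_{\mathscr M})$. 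Moreover for $a,b\in K$ one has $Ker(a_{\mathscr M})\cap Ker(b_{\mathscr M})\subseteq Ker((a+b)_{\mathscr M})$ because $(a+b)_{\mathscr M}=a_{\mathscr M}+b_{\mathscr M}$. Since an arbitrary element of $K$ is an $R$-linear combination $\sum r_i a_i$, combining these two observations shows $\bigcap_{i=1}^n Ker(a_{i,\mathscr M})\subseteq Ker((\sum r_i a_i)_{\mathscr M})$, and intersecting over all such elements yields the reverse inclusion. This verification is the main (albeit mild) obstacle; once it is in place, the proof is complete.
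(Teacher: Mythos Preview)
Your proof is correct. It differs from the paper's argument in the reduction step: the paper writes $R/K=(R/a_1R)\otimes_R\cdots\otimes_R(R/a_nR)$ and then uses the adjunction $\underline{Hom}_R(V\otimes_R W,\_\_)\cong\underline{Hom}_R(V,\underline{Hom}_R(W,\_\_))$ to iterate, thereby reducing to the principal case $K=(a)$, where the presentation $R\xrightarrow{a}R\to R/aR\to 0$ is used. You instead handle all generators at once via the presentation $R^n\xrightarrow{(a_1,\dots,a_n)}R\to R/K\to 0$ and the left exactness of $\underline{Hom}_R(\_\_,\mathscr M)$, identifying the kernel into a finite product as the intersection of kernels of the components. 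Your route is slightly more direct and avoids the iterated adjunction; the paper's route has the minor advantage that it reduces to a one-variable computation, but both rely on the same underlying input (the description of $\underline{Hom}_R$ via free presentations from \cite[(B4.2)]{AZ}). Your closing verification that $\bigcap_{a\in K}Ker(a_{\mathscr M})=\bigcap_{i=1}^n Ker(a_{i,\mathscr M})$ is a nice touch; the paper simply takes that equality from the definition in \eqref{creq4.1}.
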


\begin{proof} If $\{a_1,...,a_n\}$ is a set of generators for $K$, we note that $R/K=(R/a_1R)\otimes_R ...\otimes_R(R/a_nR)$ and hence
$\underline{Hom}_R(R/K,\mathscr M)=\underline{Hom}_R(R/a_1R,\underline{Hom}_R(R/a_2R,...\underline{Hom}_R(R/a_nR,\mathscr M)))$. Therefore, it suffices to check the result for a principal ideal $K=(a)$. In that case, $R\overset{a}{\longrightarrow}R\longrightarrow R/aR\longrightarrow 0$ is a free presentation of $R/aR$ and it follows from \cite[(B4.2)]{AZ} that
\begin{equation}
0\longrightarrow \underline{Hom}_R(R/aR,\mathscr M)\longrightarrow \underline{Hom}_R(R,\mathscr M)\cong \mathscr M \xrightarrow{\underline{Hom}_R(R,a_{\mathscr M})=a_{\mathscr M}}\underline{Hom}_R(R,\mathscr M)\cong \mathscr M
\end{equation} is exact. The result is now clear.

\end{proof}

From Lemma \ref{crL4.1} and the definition in \eqref{4.1dn}, it now follows that for any ideal $K\subseteq R$ we have
 \begin{equation}\label{crgam}
\Gamma_K(\mathscr M):=\Gamma_{K,0}(\mathscr M)=\underset{t\geq 1}{\bigcup}\textrm{ }\gamma_{K^t}(\mathscr M)= \underset{t\geq 1}{\varinjlim}\textrm{ }\underline{Hom}_R(R/K^t,\mathscr M)
 \end{equation} For the rest of this subsection, we suppose that $V$ is a finitely generated $R$-module. We note here the following fact.

\begin{lem}\label{L5.9ut} Let $V$ be a finitely generated $R$-module. Then, the functor $\underline{Hom}_R(V,\_\_):\mathscr S_R
\longrightarrow \mathscr S_R$ preserves filtered colimits.
\end{lem}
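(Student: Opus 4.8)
I want to show that $\underline{Hom}_R(V,\_\_)$ commutes with filtered colimits when $V$ is a finitely generated $R$-module. The strategy is to reduce this to the already-available fact that filtered colimits in $\mathscr S_R$ are exact (and that $\underline{Hom}_R(R,\_\_)=\mathrm{id}$) by using a finite free presentation of $V$. First I would choose a presentation $R^m \to R^n \to V \to 0$, which exists since $R$ is noetherian and $V$ is finitely generated. Applying the left-exact functor $\underline{Hom}_R(\_\_,\mathscr M)$ gives an exact sequence
\begin{equation}\label{L59exseq}
0\longrightarrow \underline{Hom}_R(V,\mathscr M)\longrightarrow \underline{Hom}_R(R^n,\mathscr M)\longrightarrow \underline{Hom}_R(R^m,\mathscr M),
\end{equation}
and since $\underline{Hom}_R(R,\_\_)$ is the identity and $\underline{Hom}_R$ sends finite direct sums in the first variable to finite products (i.e. finite direct sums, as these coincide in the abelian category $\mathscr S_R$), we have $\underline{Hom}_R(R^n,\mathscr M)\cong\mathscr M^n$ and $\underline{Hom}_R(R^m,\mathscr M)\cong\mathscr M^m$, naturally in $\mathscr M$.

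Next I would take a filtered system $\{\mathscr M_\beta\}_{\beta\in B}$ in $\mathscr S_R$ with colimit $\mathscr M:=\varinjlim_\beta\mathscr M_\beta$, and form the diagram obtained by taking the colimit of \eqref{L59exseq} over $\beta$ and comparing it with the same sequence for $\mathscr M$. Since filtered colimits commute with finite direct sums, $\varinjlim_\beta \mathscr M_\beta^n\cong\mathscr M^n$ and $\varinjlim_\beta\mathscr M_\beta^m\cong\mathscr M^m$ canonically; and since filtered colimits in the Grothendieck category $\mathscr S_R$ are exact, $\varinjlim_\beta\underline{Hom}_R(V,\mathscr M_\beta)$ is the kernel of $\varinjlim_\beta\mathscr M_\beta^n\to\varinjlim_\beta\mathscr M_\beta^m$, i.e. the kernel of $\mathscr M^n\to\mathscr M^m$. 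By \eqref{L59exseq} for $\mathscr M$, that kernel is precisely $\underline{Hom}_R(V,\mathscr M)$. Chasing the naturality of all the isomorphisms involved identifies the canonical comparison morphism $\varinjlim_\beta\underline{Hom}_R(V,\mathscr M_\beta)\to\underline{Hom}_R(V,\varinjlim_\beta\mathscr M_\beta)$ with this isomorphism, which proves the claim.

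The one point requiring a little care — and the place I would expect to spend the most words — is the identification $\underline{Hom}_R(R^n,\mathscr M)\cong\mathscr M^n$ naturally and its compatibility with filtered colimits; this follows from the adjunction $(\_\_\otimes_R R^n,\underline{Hom}_R(R^n,\_\_))$ together with $\mathscr M\otimes_R R\cong\mathscr M$ and the fact that $\_\_\otimes_R\_\_$ is additive in the module variable, so $\mathscr M\otimes_R R^n\cong\mathscr M^{\oplus n}$, whence $\underline{Hom}_R(R^n,\_\_)\cong\underline{Hom}_R(R,\_\_)^{\times n}=(\_\_)^{\times n}$. Since $n$ is finite, the product is a finite direct sum and filtered colimits pass through it. Everything else is a routine diagram chase using exactness of filtered colimits in $\mathscr S_R$, a fact used repeatedly elsewhere in the paper (e.g. in Theorem \ref{crP4.3}). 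Note that finite generation of $V$ is essential precisely because it guarantees a presentation by \emph{finitely} generated free modules; without the finiteness of $n$, the colimit would not commute with the relevant product.
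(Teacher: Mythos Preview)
Your argument is correct but follows a genuinely different route from the paper's. You use a finite free presentation $R^m\to R^n\to V\to 0$ (available since $R$ is noetherian) to express $\underline{Hom}_R(V,-)$ as a kernel of a map between finite powers of the identity functor, and then invoke exactness of filtered colimits in the Grothendieck category $\mathscr S_R$ together with the fact that filtered colimits commute with finite products. The paper instead argues via the adjunction $(\_\_\otimes_RV,\underline{Hom}_R(V,\_\_))$ and a Yoneda-type test: it checks that $\mathscr S_R(\mathscr N,-)$ agrees on both sides for every finitely generated $\mathscr N$, which reduces to showing that $\mathscr N\otimes_RV$ is finitely generated (hence compact) in the locally noetherian category $\mathscr S_R$.

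Your approach is arguably more elementary in that it does not appeal to the locally noetherian structure of $\mathscr S_R$ at all---only to exactness of filtered colimits and left exactness of $\underline{Hom}_R(-,\mathscr M)$, both of which are already recorded in the paper. It would in fact prove the result for any finitely \emph{presented} $V$ over an arbitrary commutative ring. The paper's approach, on the other hand, is more conceptual: it is an instance of the general principle that a right adjoint preserves filtered colimits whenever its left adjoint preserves compact objects, and it makes transparent where the hypothesis on $V$ enters (namely, that $\mathscr N\otimes_RV$ stays finitely generated).
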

 
\begin{proof}
Let $\{\mathscr M_\beta\}_{\beta\in B}$ be a filtered system of objects in $\mathscr S_R$. We will show that
\begin{equation}\label{514gv}\mathscr S_R\left(\mathscr N\otimes_RV,\underset{\beta\in B}{\varinjlim}\textrm{ } \mathscr M_\beta\right)=\mathscr S_R\left(\mathscr N,\underline{Hom}_R\left(V,\underset{\beta\in B}{\varinjlim}\textrm{ } \mathscr M_\beta\right)\right)=\mathscr S_R\left(\mathscr N,
\underset{\beta\in B}{\varinjlim}\textrm{ }\underline{Hom}_R(V,\mathscr M_\beta)\right)
\end{equation} for any $\mathscr N\in \mathscr S_R$. Since $\mathscr S_R$ is locally noetherian, any object of $\mathscr S_R$ may be expressed as a filtered colimit of its finitely generated subobjects. Accordingly, it suffices to check \eqref{514gv} for $\mathscr N\in \mathscr S_R$ finitely generated. If $\mathscr N$ is  finitely generated, we have 
\begin{equation}\label{515gv}
\mathscr S_R\left(\mathscr N,
\underset{\beta\in B}{\varinjlim}\textrm{ }\underline{Hom}_R(V,\mathscr M_\beta)\right)=\underset{\beta\in B}{\varinjlim}\textrm{ } \mathscr S_R(\mathscr N,\underline{Hom}_R(V,\mathscr M_\beta))= \underset{\beta\in B}{\varinjlim}\textrm{ }\mathscr S_R(\mathscr N\otimes_RV,\mathscr M_\beta)
\end{equation} From \eqref{514gv} and \eqref{515gv}, we see that it suffices to check 
\begin{equation}\label{517gv}
\mathscr S_R\left(\mathscr N\otimes_RV,\underset{\beta\in B}{\varinjlim}\textrm{ } \mathscr M_\beta\right)=\underset{\beta\in B}{\varinjlim}\textrm{ }\mathscr S_R(\mathscr N\otimes_RV,\mathscr M_\beta)
\end{equation} for $\mathscr N\in \mathscr S_R$ finitely generated. But we are given that $V$ is a finitely generated $R$-module, i.e., a quotient of $R^n$ for some $n\geq 1$. Then, $\mathscr N\otimes_RV$ is a quotient of $\mathscr N^n$ and hence $\mathscr N\otimes_RV$ is finitely generated in $\mathscr S_R$. The result is now clear.
\end{proof} 

\begin{thm}\label{P5.10pyz} Let $V$ be a finitely generated $R$-module and $K\subseteq R$ an ideal. Then, for any $\mathscr M\in \mathscr S_R$, we have
$\Gamma_{V_K}(\mathscr M)=\underline{Hom}_R(V,\Gamma_K(\mathscr M))$. In particular, $\Gamma_{V_K}$ preserves direct sums. 
\end{thm}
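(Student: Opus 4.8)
The plan is to reduce the identity to the tensor--hom adjunction together with Lemma \ref{L5.9ut}. Recall from \eqref{crgam} that $\Gamma_K(\mathscr M)=\Gamma_{K,0}(\mathscr M)=\varinjlim_{t\geq 1}\underline{Hom}_R(R/K^t,\mathscr M)$, where the transition maps are induced by the canonical surjections $R/K^{t+1}\twoheadrightarrow R/K^t$. Since $V$ is a finitely generated $R$-module, Lemma \ref{L5.9ut} shows that $\underline{Hom}_R(V,\_\_)$ commutes with filtered colimits, so
\[
\underline{Hom}_R(V,\Gamma_K(\mathscr M))=\underline{Hom}_R\Big(V,\varinjlim_{t\geq 1}\underline{Hom}_R(R/K^t,\mathscr M)\Big)=\varinjlim_{t\geq 1}\underline{Hom}_R\big(V,\underline{Hom}_R(R/K^t,\mathscr M)\big).
\]

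Next I would apply the adjunction isomorphism $\underline{Hom}_R(V\otimes_RW,\_\_)\cong\underline{Hom}_R(V,\underline{Hom}_R(W,\_\_))$ recorded above (with $W=R/K^t$), together with $V\otimes_R(R/K^t)=V/K^tV$, to identify each term of the colimit:
\[
\underline{Hom}_R\big(V,\underline{Hom}_R(R/K^t,\mathscr M)\big)\cong\underline{Hom}_R(V/K^tV,\mathscr M).
\]
The one point that requires attention is that these isomorphisms are natural in the index $t$: the transition map on the left is induced by $R/K^{t+1}\twoheadrightarrow R/K^t$, and applying the functor $V\otimes_R\_\_$ to this surjection yields precisely the surjection $V/K^{t+1}V\twoheadrightarrow V/K^tV$ that defines the transition maps of $\Gamma_{V_K}$. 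Hence the two direct systems are isomorphic, and passing to colimits gives $\underline{Hom}_R(V,\Gamma_K(\mathscr M))=\varinjlim_{t\geq 1}\underline{Hom}_R(V/K^tV,\mathscr M)=\Gamma_{V_K}(\mathscr M)$ by \eqref{cr510g}. Naturality of this identification in $\mathscr M$ is immediate from the naturality of the adjunction and of filtered colimits, so the equality holds as functors.

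Finally, for the ``in particular'' assertion: $\Gamma_K=\Gamma_{K,0}$ preserves direct sums by Lemma \ref{L5.04xl0}, and $\underline{Hom}_R(V,\_\_)$ preserves arbitrary direct sums because any direct sum $\bigoplus_{\beta\in B}\mathscr M_\beta$ is the filtered colimit, over the finite subsets $S\subseteq B$, of the finite sub-sums $\bigoplus_{\beta\in S}\mathscr M_\beta$; on a finite sub-sum $\underline{Hom}_R(V,\_\_)$ acts as a finite product and hence commutes by additivity, while it commutes with the filtered colimit over $S$ by Lemma \ref{L5.9ut}. Composing the two, $\Gamma_{V_K}=\underline{Hom}_R(V,\Gamma_K(\_\_))$ preserves direct sums. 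I do not expect any genuine obstacle in this argument; the only subtlety is the naturality of the tensor--hom isomorphism in $t$, which is what upgrades a termwise isomorphism of the two systems to an isomorphism of their colimits.
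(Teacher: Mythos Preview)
Your argument is correct and follows essentially the same route as the paper: both use the adjunction $\underline{Hom}_R(V/K^tV,\mathscr M)\cong\underline{Hom}_R(V,\underline{Hom}_R(R/K^t,\mathscr M))$ together with Lemma~\ref{L5.9ut} and \eqref{crgam}, just traversed in the opposite direction. Your extra care about naturality in $t$ and the explicit reduction of direct sums to filtered colimits of finite sub-sums are welcome details that the paper leaves implicit.
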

\begin{proof}
Since $V$ is finitely generated,  it follows from \eqref{cr510g}, \eqref{crgam} and Lemma \ref{L5.9ut} that
\begin{equation*}
\begin{array}{ll}
\Gamma_{V_K}(\mathscr M)= \underset{t\geq 1}{\varinjlim}\textrm{ }\underline{Hom}_R(V/K^tV,\mathscr M)=\underset{t\geq 1}{\varinjlim}\textrm{ }\underline{Hom}_R(V,\underline{Hom}_R(R/K^t,\mathscr M))&=\underline{Hom}_R(V,\underset{t\geq 1}{\varinjlim}\textrm{ }\underline{Hom}_R(R/K^t,\mathscr M))\\
&=\underline{Hom}_R(V,\Gamma_K(\mathscr M))\\
\end{array}
\end{equation*} The last statement is clear from Lemma \ref{L5.9ut} and the fact that $\Gamma_K=\Gamma_{K,0}$ preserves direct sums.
\end{proof}

\begin{lem}\label{L5115mcp}
Let $R\longrightarrow R'$ be an extension of $k$-algebras. For any $R$-module $W$ and any $\mathscr N\in \mathscr S_{R'}$, we have an isomorphism
\begin{equation}
\underline{Hom}_{R'}(W\otimes_RR',\mathscr N)\cong \underline{Hom}_R(W,\mathscr N)
\end{equation}
\end{lem}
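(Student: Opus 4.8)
The plan is to recognize the classical identity $\mathrm{Hom}_{R'}(W\otimes_RR',N)\cong\mathrm{Hom}_R(W,N)$ (extension of scalars being left adjoint to restriction, combined with hom--tensor) as a statement about values of right adjoints, and deduce the abstract version from uniqueness of adjoints, once we have observed that $\underline{Hom}_R(W,\mathscr N)$ carries a natural $R'$-module structure. Since $R\to R'$ is a morphism of commutative $k$-algebras, restriction of scalars gives a functor $\mathscr S_{R'}\to\mathscr S_R$, and the abstract tensor product of \cite[$\S$ B3]{AZ} gives a base-change functor $\_\_\otimes_RR'\colon\mathscr S_R\to\mathscr S_{R'}$. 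For $\mathscr N\in\mathscr S_{R'}$ the structure map $R'\to\mathscr S(\mathscr N,\mathscr N)$ consists of endomorphisms of $\mathscr N$ as an object of $\mathscr S_R$ (as $R'$ is commutative these commute with the $R$-action), so by functoriality of $\underline{Hom}_R(W,\_\_)\colon\mathscr S_R\to\mathscr S_R$ we obtain $R'\to\mathscr S(\underline{Hom}_R(W,\mathscr N),\underline{Hom}_R(W,\mathscr N))$; thus $\underline{Hom}_R(W,\mathscr N)\in\mathscr S_{R'}$, and the asserted isomorphism will be one of objects of $\mathscr S_{R'}$. Similarly, for $\mathscr P\in\mathscr S_{R'}$ the object $\mathscr P\otimes_RW$ carries the $R'$-action $r'_{\mathscr P}\otimes\mathrm{id}_W$, so $\_\_\otimes_RW$ defines an endofunctor of $\mathscr S_{R'}$.

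Next I would show that $\underline{Hom}_{R'}(W\otimes_RR',\_\_)$ is right adjoint to $\_\_\otimes_RW\colon\mathscr S_{R'}\to\mathscr S_{R'}$. For $\mathscr P\in\mathscr S_{R'}$ one has $\mathscr P\cong\mathscr P\otimes_{R'}R'$ and $W\otimes_RR'\cong R'\otimes_RW$, so the associativity of the abstract tensor product across $R\to R'$ (\cite[$\S$ B3]{AZ}) yields a natural isomorphism $\mathscr P\otimes_{R'}(W\otimes_RR')\cong(\mathscr P\otimes_{R'}R')\otimes_RW\cong\mathscr P\otimes_RW$ in $\mathscr S_{R'}$. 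Hence $\_\_\otimes_{R'}(W\otimes_RR')$ and $\_\_\otimes_RW$ coincide as endofunctors of $\mathscr S_{R'}$; since $\underline{Hom}_{R'}(W\otimes_RR',\_\_)$ is right adjoint to the former by \cite[$\S$ B4]{AZ}, it is right adjoint to the latter.

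Then I would check that $\underline{Hom}_R(W,\_\_)\colon\mathscr S_{R'}\to\mathscr S_{R'}$ is likewise right adjoint to $\_\_\otimes_RW\colon\mathscr S_{R'}\to\mathscr S_{R'}$. We already have the adjunction bijection $\mathscr S_R(\mathscr P\otimes_RW,\mathscr N)\cong\mathscr S_R(\mathscr P,\underline{Hom}_R(W,\mathscr N))$, natural in $\mathscr P,\mathscr N\in\mathscr S_R$. For $\mathscr P,\mathscr N\in\mathscr S_{R'}$, the $R'$-action on $\mathscr P\otimes_RW$ is $r'_{\mathscr P}\otimes\mathrm{id}_W$ and the induced $R'$-action on $\underline{Hom}_R(W,\mathscr N)$ is $\underline{Hom}_R(W,r'_{\mathscr N})$; naturality of the bijection in both variables shows that a morphism $f$ satisfies $f\circ(r'_{\mathscr P}\otimes\mathrm{id}_W)=r'_{\mathscr N}\circ f$ for all $r'\in R'$ if and only if its adjunct $g$ satisfies $g\circ r'_{\mathscr P}=\underline{Hom}_R(W,r'_{\mathscr N})\circ g$ for all $r'$, i.e. $f$ is $R'$-linear precisely when $g$ is. Restricting the bijection to $R'$-linear morphisms gives $\mathscr S_{R'}(\mathscr P\otimes_RW,\mathscr N)\cong\mathscr S_{R'}(\mathscr P,\underline{Hom}_R(W,\mathscr N))$, natural in $\mathscr P$ and $\mathscr N$, as required.

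Finally, both $\underline{Hom}_{R'}(W\otimes_RR',\_\_)$ and $\underline{Hom}_R(W,\_\_)$ are right adjoint to $\_\_\otimes_RW$ on $\mathscr S_{R'}$, hence naturally isomorphic; evaluating at $\mathscr N$ gives the lemma. The step I expect to require the most care is the $R'$-equivariance bookkeeping of the third paragraph --- verifying that the $\mathscr S_R$-adjunction isomorphism restricts to the subgroups of $R'$-linear morphisms --- together with invoking the correct base-change associativity for the abstract tensor product in the second paragraph; the rest is a formal consequence of the uniqueness of adjoints.
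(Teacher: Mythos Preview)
Your argument is correct and takes a genuinely different route from the paper. The paper proceeds by a direct computation: it chooses a free presentation $R^{(Y)}\to R^{(X)}\to W\to 0$, observes that $W\otimes_RR'$ then has the presentation $R'^{(Y)}\to R'^{(X)}\to W\otimes_RR'\to 0$, and notes that by the very definition of $\underline{Hom}$ in \cite[(B4.2)]{AZ} both $\underline{Hom}_R(W,\mathscr N)$ and $\underline{Hom}_{R'}(W\otimes_RR',\mathscr N)$ are the kernel of the \emph{same} morphism $\mathscr N^{X}\to\mathscr N^{Y}$. Your approach instead identifies both functors as right adjoints of $\_\_\otimes_RW$ on $\mathscr S_{R'}$ and appeals to uniqueness of adjoints. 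The paper's proof is shorter and entirely elementary, but it leaves implicit the $R'$-linearity and naturality of the resulting identification; your adjoint argument delivers a natural isomorphism in $\mathscr S_{R'}$ for free, at the cost of the bookkeeping you flag (the $R'$-equivariance of the $\mathscr S_R$-adjunction and the associativity $\mathscr P\otimes_{R'}(W\otimes_RR')\cong\mathscr P\otimes_RW$). Both approaches are valid; yours is more conceptual, the paper's more hands-on.
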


\begin{proof}
We write $W$ as the cokernel $R^{(Y)}\longrightarrow R^{(X)}\longrightarrow W\longrightarrow 0$ of free modules. Then, $W\otimes_RR'$ is expressed as the cokernel $R'^{(Y)}\longrightarrow R'^{(X)}\longrightarrow W\otimes_RR'\longrightarrow 0$. By definition (see \cite[B4.2]{AZ}), $\underline{Hom}_R(W,\mathscr N)$ is given as the kernel of the induced map $\mathscr N^{X}=\underline{Hom}_R(R^{(X)},\mathscr N)\longrightarrow 
\underline{Hom}_R(R^{(Y)},\mathscr N)=\mathscr N^{Y}$.  Similarly, $\underline{Hom}_{R'}(W\otimes_RR',\mathscr N)$ is also given by the kernel
$\mathscr N^{X}=\underline{Hom}_{R'}(R'^{(X)},\mathscr N)\longrightarrow 
\underline{Hom}_{R'}(R'^{(Y)},\mathscr N)=\mathscr N^{Y}$. 
\end{proof}

\begin{lem}\label{L511pcm} Let $R\longrightarrow R[T^{-1}]$ be the localization of $R$ with respect to a multiplicatively closed subset $T$. Then, for any finitely generated $R$-module $V$ and any $\mathscr M\in \mathscr S_R$, we have
\begin{equation}
\underline{Hom}_R(V,\mathscr M)_T\cong \underline{Hom}_{R[T^{-1}]}(V_T,\mathscr M_T)
\end{equation}

\end{lem}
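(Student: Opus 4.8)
The plan is to reduce everything to the presentation of $V$ and use the compatibility of localization with finite limits and with the internal hom of free modules. First I would note that, since $R[T^{-1}]$ is a flat $R$-algebra, the functor $\_\_\otimes_R R[T^{-1}]\colon\mathscr S_R\longrightarrow\mathscr S_{R[T^{-1}]}$ is exact, and in particular preserves kernels. By the definition of the internal hom in \cite[B4.2]{AZ}, if we choose a free presentation $R^{(Y)}\longrightarrow R^{(X)}\longrightarrow V\longrightarrow 0$ with $X$ finite (this is where finite generation of $V$ enters — we do not need $Y$ finite, but it is harmless), then $\underline{Hom}_R(V,\mathscr M)$ is the kernel of the induced map $\mathscr M^{X}=\underline{Hom}_R(R^{(X)},\mathscr M)\longrightarrow \underline{Hom}_R(R^{(Y)},\mathscr M)=\mathscr M^{Y}$. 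Here $\mathscr M^X$ is a \emph{finite} product, which therefore agrees with the finite coproduct $\mathscr M^{(X)}$, so localizing commutes with it: $(\mathscr M^X)_T\cong (\mathscr M_T)^X$.

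Next I would apply the exact functor $\_\_\otimes_R R[T^{-1}]$ to the defining left-exact sequence $0\longrightarrow \underline{Hom}_R(V,\mathscr M)\longrightarrow \mathscr M^X\longrightarrow \mathscr M^Y$. Exactness gives $\underline{Hom}_R(V,\mathscr M)_T\cong \operatorname{Ker}\bigl((\mathscr M^X)_T\longrightarrow (\mathscr M^Y)_T\bigr)$. Using the previous remark on the left-hand side ($X$ finite) one rewrites this as $\operatorname{Ker}\bigl((\mathscr M_T)^X\longrightarrow (\mathscr M^Y)_T\bigr)$; and since the map factors through $(\mathscr M_T)^Y$ — more precisely the canonical map $(\mathscr M_T)^{Y}\hookrightarrow (\mathscr M^{Y})_T$ is a monomorphism when $Y$ is arbitrary and the comparison map $(\mathscr M^Y)_T\to(\mathscr M_T)^Y$ need not be iso, but the relevant kernel is unchanged because the two composites into $(\mathscr M^Y)_T$ and into $(\mathscr M_T)^Y$ have the same kernel — one gets $\operatorname{Ker}\bigl((\mathscr M_T)^X\longrightarrow (\mathscr M_T)^Y\bigr)$.

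On the other side, base-changing the presentation along $R\longrightarrow R[T^{-1}]$ gives a free presentation $R[T^{-1}]^{(Y)}\longrightarrow R[T^{-1}]^{(X)}\longrightarrow V_T\longrightarrow 0$ of $V_T=V\otimes_R R[T^{-1}]$ over $R[T^{-1}]$ (right exactness of $\otimes_R R[T^{-1}]$), with $X$ still finite, so again by \cite[B4.2]{AZ} the object $\underline{Hom}_{R[T^{-1}]}(V_T,\mathscr M_T)$ is the kernel of $(\mathscr M_T)^X=\underline{Hom}_{R[T^{-1}]}(R[T^{-1}]^{(X)},\mathscr M_T)\longrightarrow \underline{Hom}_{R[T^{-1}]}(R[T^{-1}]^{(Y)},\mathscr M_T)=(\mathscr M_T)^Y$. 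Comparing the two kernel descriptions — and checking that the connecting maps $(\mathscr M_T)^X\to(\mathscr M_T)^Y$ agree, which amounts to the matrix of the presentation being the image of the matrix over $R$ — yields the desired isomorphism $\underline{Hom}_R(V,\mathscr M)_T\cong\underline{Hom}_{R[T^{-1}]}(V_T,\mathscr M_T)$.

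The main obstacle I anticipate is not conceptual but bookkeeping: making sure the comparison map between $(\mathscr M^Y)_T$ and $(\mathscr M_T)^Y$ (which is \emph{not} an isomorphism when $Y$ is infinite, since localization does not commute with infinite products) does not actually enter the argument. The clean way to sidestep this is to observe that the inclusion $(\mathscr M_T)^{Y}\hookrightarrow (\mathscr M^{Y})_T$ induced by the canonical maps $\mathscr M^Y\to\mathscr M^Y_T$ restricted coordinatewise is a monomorphism, and the map $(\mathscr M^X)_T\to(\mathscr M^Y)_T$ obtained by localizing factors through $(\mathscr M_T)^{Y}$ (because it does so before localizing, coordinatewise), so its kernel is computed inside $(\mathscr M_T)^Y$; both sides of the claimed isomorphism are then literally the same subobject $\operatorname{Ker}\bigl((\mathscr M_T)^X\to(\mathscr M_T)^Y\bigr)$ of $(\mathscr M_T)^X$. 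Alternatively, if $\mathscr S$ is such that one can take $Y$ finite (e.g.\ $R$ noetherian so $V$ is finitely presented — which is the case here!), the comparison map is an honest isomorphism and the whole argument collapses to: localization is exact, preserves finite products, and commutes with base change of the presentation. I would present the noetherian version as the primary argument and remark that only finite presentation of $V$ is used.
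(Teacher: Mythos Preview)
Your argument is correct in the setting of the paper (where $R$ is noetherian, so $V$ is finitely presented and $Y$ may be taken finite), but it follows a different route from the paper's proof. The paper proceeds in two steps using lemmas already in hand: first it invokes Lemma~\ref{L5115mcp} to obtain $\underline{Hom}_{R[T^{-1}]}(V_T,\mathscr M_T)\cong \underline{Hom}_R(V,\mathscr M_T)$, and then it uses that localization $\mathscr M_T$ is a filtered colimit of copies of $\mathscr M$ together with Lemma~\ref{L5.9ut} (which says $\underline{Hom}_R(V,\_\_)$ preserves filtered colimits for $V$ finitely generated) to conclude $\underline{Hom}_R(V,\mathscr M_T)\cong \underline{Hom}_R(V,\mathscr M)_T$. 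Your approach instead works directly with a finite presentation and exactness of localization; this is more self-contained and arguably more elementary, but does not reuse the paper's infrastructure. One caution: in your discussion of the general (non-noetherian) case you write an ``inclusion $(\mathscr M_T)^{Y}\hookrightarrow (\mathscr M^{Y})_T$,'' but the canonical comparison map runs the other way, $(\mathscr M^{Y})_T\to(\mathscr M_T)^{Y}$, and there is no obvious reason for a monomorphism in the direction you state. Since $R$ is noetherian throughout the paper, I would simply take $Y$ finite from the outset and drop the discussion of infinite $Y$ entirely.
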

 \begin{proof}
Applying Lemma \ref{L5115mcp}, we know that $ \underline{Hom}_{R[T^{-1}]}(V_T,\mathscr M_T)\cong \underline{Hom}_R(V,\mathscr M_T)$. We have noted before that the localization $\mathscr M_T$ is given by a filtered colimit. Since $V$ is a  finitely generated $R$-module, we know from Lemma \ref{L5.9ut} that
$\underline{Hom}_R(V,\_\_)$ preserves filtered colimits. We now have $ \underline{Hom}_R(V,\mathscr M_T)\cong \underline{Hom}_R(V,\mathscr M)_T$.
\end{proof}

We now return to the  ideal $I=I_{\alpha_1}\cap ... \cap I_{\alpha_k}$ and the partially ordered set $P$ consisting of all possible sums of the ideals $\{I_{\alpha_1},...,I_{\alpha_k}\}$ as  before. We fix $J=0$ and a finitely generated $R$-module $V$. For $p\in P$, we set $\Psi_p(\_\_):=\Gamma_{V_{I_p}}(\_\_)$ and $\Psi:=\Gamma_{V_I}$. 

\begin{lem}\label{513lemy}
Let $\mathscr L\in \mathscr S_R$ be an $R$-elementary object and let $\mathfrak p:=Ann(\mathscr L)$. Let $K\subseteq R$ be an ideal. 

\smallskip
(a) If $\mathfrak{p}\notin \mathbb W(K,0)=\mathbb V(K)$, then we have $\Gamma_{V_K}(\mathscr E(\mathscr L))=0$. 

\smallskip
(b) Suppose that $\mathfrak{p}\in \mathbb W(K,0)=\mathbb V(K)$. Then we have $\Gamma_{V_K}(\mathscr E(\mathscr L))=\underline{Hom}_R(V,\mathscr E(\mathscr L))$.  
\end{lem}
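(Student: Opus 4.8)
The plan is to reduce everything to the behaviour of the one‑variable torsion functor $\Gamma_K = \Gamma_{K,0}$ on injective hulls of $R$-elementary objects — a dichotomy that has already been settled — and then to transport the answer through the natural identification of $\Gamma_{V_K}$ with $\underline{Hom}_R(V,\Gamma_K(\_\_))$.

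Concretely, the first step is to invoke Proposition \ref{P5.10pyz}: since $V$ is a finitely generated $R$-module (our standing hypothesis in this subsection), we have $\Gamma_{V_K}(\mathscr E(\mathscr L)) = \underline{Hom}_R(V, \Gamma_K(\mathscr E(\mathscr L)))$. Thus it suffices to identify $\Gamma_K(\mathscr E(\mathscr L))$. For part (a), suppose $\mathfrak p \notin \mathbb W(K,0) = \mathbb V(K)$. Lemma \ref{L5.4xl}, applied with $J = 0$, gives $\Gamma_K(\mathscr E(\mathscr L)) = 0$ (this rests on $Ass(\mathscr E(\mathscr L)) = Ass(\mathscr L) = \{\mathfrak p\}$ from Corollary \ref{C3.7}, together with Proposition \ref{P4.3ed}). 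Since $\underline{Hom}_R(V, \_\_)$ is additive and left exact, $\underline{Hom}_R(V, 0) = 0$, and so $\Gamma_{V_K}(\mathscr E(\mathscr L)) = 0$. For part (b), suppose $\mathfrak p \in \mathbb W(K,0) = \mathbb V(K)$. Then Lemma \ref{L5.5hx}(a), again with $J = 0$, gives $\Gamma_K(\mathscr E(\mathscr L)) = \mathscr E(\mathscr L)$ (this is where Lemma \ref{L4.7x4} on torsion injective hulls enters). Substituting into the formula from Proposition \ref{P5.10pyz} yields $\Gamma_{V_K}(\mathscr E(\mathscr L)) = \underline{Hom}_R(V, \mathscr E(\mathscr L))$, as claimed.

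There is no serious obstacle here: the substantive point — that the $\Gamma_{K,0}$-torsion subobject of $\mathscr E(\mathscr L)$ is either trivial or all of $\mathscr E(\mathscr L)$ — is exactly the content of Lemmas \ref{L5.4xl} and \ref{L5.5hx}, and the passage to $\Gamma_{V_K}$ is purely formal once Proposition \ref{P5.10pyz} is in hand. The only hypothesis one must be careful to retain is the finite generation of $V$, which is needed for Proposition \ref{P5.10pyz}; one could alternatively argue directly from the colimit definition $\Gamma_{V_K}(\_\_) = \varinjlim_t \underline{Hom}_R(V/K^tV, \_\_)$, but routing through $\Gamma_K$ is cleaner.
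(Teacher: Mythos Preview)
Your proof is correct and follows essentially the same route as the paper: both apply Proposition \ref{P5.10pyz} to reduce to computing $\Gamma_K(\mathscr E(\mathscr L))$, and then invoke Lemma \ref{L5.4xl} for part (a) and Lemma \ref{L5.5hx}(a) for part (b). Your additional parenthetical remarks tracing the dependencies back to Corollary \ref{C3.7}, Proposition \ref{P4.3ed}, and Lemma \ref{L4.7x4} are accurate and merely make explicit what the paper leaves implicit.
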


\begin{proof} (a) 
Since $\mathfrak{p}\notin \mathbb W(K,0)=\mathbb V(K)$,  it follows by Lemma \ref{L5.4xl} that $\Gamma_{K}(\mathscr E(\mathscr L))=\Gamma_{K,0}(\mathscr E(\mathscr L))=0$.  Since $V$ is finitely generated, we have  by Proposition \ref{P5.10pyz} that  $\Gamma_{V_K}(\mathscr E(\mathscr L))=\underline{Hom}_R(V,\Gamma_K(\mathscr E(\mathscr L)))=0$.

\smallskip
(b) Since  $\mathfrak{p}\in \mathbb W(K,0)=\mathbb V(K)$, it follows by Lemma \ref{L5.5hx} that $\Gamma_{K}(\mathscr E(\mathscr L))=\Gamma_{K,0}(\mathscr E(\mathscr L))=\mathscr E(\mathscr L)$.  Since $V$ is finitely generated, we have  by Proposition \ref{P5.10pyz} that  $\Gamma_{V_K}(\mathscr E(\mathscr L))=\underline{Hom}_R(V,\Gamma_K(\mathscr E(\mathscr L)))=\underline{Hom}_R(V,\mathscr E(\mathscr L))$. 
\end{proof}

\begin{lem}\label{514lemy}
Let $\mathscr L\in \mathscr S_R$ be an $R$-elementary object and let $\mathfrak p:=Ann(\mathscr L)$. Let $K\subseteq R$ be an ideal. 
 If $\mathfrak p\not\subseteq \mathfrak m$ for some maximal ideal $\mathfrak m$, then $\Gamma_{V_K}(\mathscr E(\mathscr L))_{\mathfrak m}=0$. 
\end{lem}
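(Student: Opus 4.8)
The plan is to reduce everything to the already-established vanishing $\mathscr E(\mathscr L)_{\mathfrak m}=0$ and then transport it through $\underline{Hom}_R(V,\_\_)$ using the compatibility of internal hom with localization. First I would invoke Lemma \ref{L5.5hx}(b): since $\mathfrak p=Ann(\mathscr L)\not\subseteq \mathfrak m$, we have $\mathscr E(\mathscr L)_{\mathfrak m}=0$. This is the one genuine input, and it rests on $\mathscr L$ being finitely generated so that $Supp(\mathscr L)=\mathbb V(\mathfrak p)$, together with Proposition \ref{P2.14ju}.

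Next I would rewrite $\Gamma_{V_K}(\mathscr E(\mathscr L))$. Since $V$ is a finitely generated $R$-module, Proposition \ref{P5.10pyz} gives $\Gamma_{V_K}(\mathscr E(\mathscr L))=\underline{Hom}_R(V,\Gamma_K(\mathscr E(\mathscr L)))$. Because $\Gamma_K(\mathscr E(\mathscr L))\subseteq \mathscr E(\mathscr L)$ and $\underline{Hom}_R(V,\_\_)$ is left exact, this embeds as a subobject $\Gamma_{V_K}(\mathscr E(\mathscr L))\hookrightarrow \underline{Hom}_R(V,\mathscr E(\mathscr L))$. (Alternatively one can note directly that $\Gamma_K(\mathscr E(\mathscr L))_{\mathfrak m}\subseteq \mathscr E(\mathscr L)_{\mathfrak m}=0$ by exactness of localization, which would also suffice, but routing through $\underline{Hom}_R(V,\mathscr E(\mathscr L))$ keeps the argument uniform.)

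Finally I would localize at $\mathfrak m$. Localization is exact, so $\Gamma_{V_K}(\mathscr E(\mathscr L))_{\mathfrak m}$ is a subobject of $\underline{Hom}_R(V,\mathscr E(\mathscr L))_{\mathfrak m}$. Since $V$ is finitely generated, Lemma \ref{L511pcm} identifies $\underline{Hom}_R(V,\mathscr E(\mathscr L))_{\mathfrak m}\cong \underline{Hom}_{R_{\mathfrak m}}(V_{\mathfrak m},\mathscr E(\mathscr L)_{\mathfrak m})=\underline{Hom}_{R_{\mathfrak m}}(V_{\mathfrak m},0)=0$. Hence $\Gamma_{V_K}(\mathscr E(\mathscr L))_{\mathfrak m}=0$, as claimed. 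There is no real obstacle here; the only point requiring care is to make sure the finite generation of $V$ is genuinely used, as it is precisely what licenses both Proposition \ref{P5.10pyz} and Lemma \ref{L511pcm}.
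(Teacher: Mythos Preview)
Your proof is correct and uses the same key ingredients as the paper (Lemma \ref{L5.5hx}(b), Proposition \ref{P5.10pyz}, and Lemma \ref{L511pcm}). The paper instead splits into cases according to whether $\mathfrak p\in\mathbb V(K)$, invoking Lemma \ref{513lemy} to identify $\Gamma_{V_K}(\mathscr E(\mathscr L))$ exactly as either $0$ or $\underline{Hom}_R(V,\mathscr E(\mathscr L))$; your embedding $\Gamma_{V_K}(\mathscr E(\mathscr L))\hookrightarrow\underline{Hom}_R(V,\mathscr E(\mathscr L))$ via left exactness of $\underline{Hom}_R(V,\_\_)$ handles both cases at once and is marginally more direct.
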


\begin{proof} 
If   $\mathfrak{p}\notin \mathbb W(K,0)=\mathbb V(K)$,  we already  have $\Gamma_{V_K}(\mathscr E(\mathscr L))=0$ by Lemma \ref{513lemy}(a). Otherwise, suppose $\mathfrak{p}\in \mathbb W(K,0)=\mathbb V(K)$. Then Lemma \ref{513lemy}(b) gives us $\Gamma_{V_K}(\mathscr E(\mathscr L))=\underline{Hom}_R(V,\mathscr E(\mathscr L))$.   Because $V$ is finitely generated, applying Lemma \ref{L511pcm} gives us $\Gamma_{V_K}(\mathscr E(\mathscr L))_{\mathfrak m}=\underline{Hom}_{R_{\mathfrak m}}(V_{\mathfrak m},\mathscr E(\mathscr L)_{\mathfrak m})$ for any maximal ideal $\mathfrak m$. Since $\mathfrak p\not\subseteq \mathfrak m$, we have by Lemma \ref{L5.5hx} that $\mathscr E(\mathscr L)_{\mathfrak m}=0$, which shows that $\Gamma_{V_K}(\mathscr E(\mathscr L))_{\mathfrak m}=0$.

\end{proof}

\begin{thm}\label{515mth} Let $ V$ be a finitely generated $R$-module. Then, for any $\mathscr M\in \mathscr S_R$, we have a spectral sequence
\begin{equation}
E_2^{-i,j}=\mathbb L_i\text{colim}_{p\in P} H^j_{V_{I_p}}(\mathscr M)\Longrightarrow H^{j-i}_{V_I}(\mathscr M)
\end{equation}

\end{thm}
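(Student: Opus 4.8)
The plan is to obtain this as a direct instance of Theorem~\ref{T5.3cr}. Fix $J=0$ and the finitely generated module $V$, and for $p\in P$ set $\Psi_p:=\Gamma_{V_{I_p}}$ and $\Psi:=\Gamma_{V_I}$. Since $I\subseteq I_p$ for every $p\in P$, and since $K\subseteq K'$ induces a canonical inclusion $\Gamma_{V_{K'}}(\_\_)\hookrightarrow\Gamma_{V_{K}}(\_\_)$ (coming from the surjections $V/K^tV\twoheadrightarrow V/(K')^tV$), these functors assemble into an additive functor $\Psi_{[\ast]}\colon\mathscr S_R\longrightarrow Fun(\hat{P},\mathscr S_R)$ with $\Psi_{1_{\hat{P}}}=\Gamma_{V_I}$. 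It then suffices to verify the three conditions (1)--(3) of the axiomatic setup; Theorem~\ref{T5.3cr} then yields
\[
E_2^{-i,j}=\mathbb L_i\text{colim}_{p\in P}\,\mathbb R^j\Gamma_{V_{I_p}}(\mathscr M)\Longrightarrow \mathbb R^{j-i}\Gamma_{V_I}(\mathscr M),
\]
which is the claimed spectral sequence, since $H^\bullet_{V_K}=\mathbb R^\bullet\Gamma_{V_K}$ by definition.

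Condition (1) is immediate: each $\Gamma_{V_{I_p}}$ is left exact, being a filtered colimit of the left exact functors $\underline{Hom}_R(V/I_p^tV,\_\_)$ by \eqref{cr510g} and filtered colimits being exact in $\mathscr S_R$, and it preserves direct sums by Proposition~\ref{P5.10pyz} (here we use that $V$ is finitely generated). For condition (2), let $\mathscr L\in\mathscr S_R$ be $R$-elementary, $\mathfrak p:=Ann(\mathscr L)$, and $\mathfrak m$ a maximal ideal. If $\mathfrak p\subseteq\mathfrak m$ and $\mathfrak p\in\mathbb W(I_p,0)=\mathbb V(I_p)$, Lemma~\ref{513lemy}(b) gives $\Gamma_{V_{I_p}}(\mathscr E(\mathscr L))=\underline{Hom}_R(V,\mathscr E(\mathscr L))$, and Lemma~\ref{L511pcm} identifies its localization at $\mathfrak m$ with $\underline{Hom}_{R_{\mathfrak m}}(V_{\mathfrak m},\mathscr E(\mathscr L)_{\mathfrak m})$, which we take to be $\mathscr X(\mathscr L,\mathfrak m)$. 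If $\mathfrak p\subseteq\mathfrak m$ but $\mathfrak p\notin\mathbb V(I_p)$, then $\Gamma_{V_{I_p}}(\mathscr E(\mathscr L))=0$ by Lemma~\ref{513lemy}(a), so we set $\mathscr Y(\mathscr L,\mathfrak m):=0$. Finally, if $\mathfrak p\not\subseteq\mathfrak m$, then $\Gamma_{V_{I_p}}(\mathscr E(\mathscr L))_{\mathfrak m}=0$ by Lemma~\ref{514lemy}. This is precisely the trichotomy \eqref{eexxyy}.

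Condition (3) is the only point requiring care, and is where the main (minor) obstacle lies. For $p\le q$ in $P$ we have $I_p\supseteq I_q$, hence $\mathbb V(I_p)\subseteq\mathbb V(I_q)$. After localizing at $\mathfrak m$, the source $\Gamma_{V_{I_p}}(\mathscr E(\mathscr L))_{\mathfrak m}$ is either $0$ --- when $\mathfrak p\not\subseteq\mathfrak m$ or $\mathfrak p\notin\mathbb V(I_p)$ --- in which case the transition map is automatically zero, in agreement with \eqref{ey5.4c}; or else $\mathfrak p\subseteq\mathfrak m$ and $\mathfrak p\in\mathbb V(I_p)\subseteq\mathbb V(I_q)$, so that by Lemma~\ref{L5.5hx}(a) we have $\Gamma_{I_p}(\mathscr E(\mathscr L))=\mathscr E(\mathscr L)=\Gamma_{I_q}(\mathscr E(\mathscr L))$, and under the natural identification of Proposition~\ref{P5.10pyz} the transition map $\Gamma_{V_{I_p}}\to\Gamma_{V_{I_q}}$ becomes $\underline{Hom}_R(V,\_\_)$ applied to the identity of $\mathscr E(\mathscr L)$, hence is the identity of $\mathscr X(\mathscr L,\mathfrak m)$ after localization. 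Thus (1)--(3) all hold and the result follows from Theorem~\ref{T5.3cr}. The substantive work has been done in Proposition~\ref{P5.10pyz} and Lemmas~\ref{513lemy} and~\ref{514lemy}; what remains is the bookkeeping of checking that the identification $\Gamma_{V_K}(\_\_)\cong\underline{Hom}_R(V,\Gamma_K(\_\_))$ is natural in $K$, so that the transition maps restrict to the identity on the $\mathscr X$-part exactly as condition (3) demands. This parallels the proof of Proposition~\ref{P5.6fq}.
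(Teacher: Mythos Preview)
Your proof is correct and follows the same approach as the paper: set $J=0$, $\Psi_p:=\Gamma_{V_{I_p}}$, $\Psi:=\Gamma_{V_I}$, verify the axiomatic conditions via Proposition~\ref{P5.10pyz} and Lemmas~\ref{513lemy}, \ref{514lemy}, and invoke Theorem~\ref{T5.3cr}. The paper's own proof is the one-line ``This follows directly from Lemma~\ref{513lemy}, Lemma~\ref{514lemy} and Theorem~\ref{T5.3cr}''; you have simply made explicit the verification of condition~(3) (where $\mathscr Y(\mathscr L,\mathfrak m)=0$ renders all but the identity case trivial) that the paper leaves to the reader.
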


\begin{proof}
This follows directly from Lemma \ref{513lemy}, Lemma \ref{514lemy} and Theorem \ref{T5.3cr}. 
\end{proof}

\subsection{Generalized Nagata ideal transforms on $\mathscr S_R$}

We let $V$ be a finitely generated $R$-module and $K\subseteq R$ be an ideal.  We define the generalized Nagata ideal transform $\Delta_{V_K}$ on $\mathscr S_R$ as follows
\begin{equation}\label{521gnt}
\Delta_{V_K}:\mathscr S_R\longrightarrow \mathscr S_R\qquad \mathscr M\mapsto \underset{t\geq 1}{\varinjlim}\textrm{ }\underline{Hom}_R(K^tV,\mathscr M)
\end{equation} It is immediate that the functor $\Delta_{V_K}$ is left exact. Since $V$ is finitely generated, it is also clear from Lemma \ref{L5.9ut} that $\Delta_{V_K}$ preserves direct sums. We now need the following result.

\begin{lem}\label{520mely}
Let $\mathscr E\in \mathscr S_R$ be an injective object. Then, the functor $\underline{Hom}_R(\_\_,\mathscr E):R-Mod\longrightarrow \mathscr S_R$ is exact.
\end{lem}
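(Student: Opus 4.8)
The plan is to show that for an injective object $\mathscr E\in \mathscr S_R$, the contravariant functor $\underline{Hom}_R(\_\_,\mathscr E):R\text{-}Mod\longrightarrow \mathscr S_R$ sends short exact sequences of $R$-modules to short exact sequences in $\mathscr S_R$. Since we already know from the adjunction $(\_\_\otimes_RV)\dashv \underline{Hom}_R(V,\_\_)$ that $\underline{Hom}_R(\_\_,\mathscr M)$ is left exact in the first variable for every $\mathscr M$, the only thing remaining is right exactness, i.e., the preservation of epimorphisms, when the target is injective. So first I would reduce to proving: given a surjection $W\twoheadrightarrow W''$ of $R$-modules with kernel $W'$, the induced map $\underline{Hom}_R(W,\mathscr E)\longrightarrow \underline{Hom}_R(W',\mathscr E)$ is an epimorphism in $\mathscr S_R$.

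The key step is to pass this question through the tensor-hom adjunction to a lifting problem that injectivity of $\mathscr E$ can solve. For a fixed generator (or any object) $\mathscr P\in \mathscr S_R$, applying $\mathscr S_R(\mathscr P,\_\_)$ and using adjunction gives
\begin{equation}
\mathscr S_R(\mathscr P,\underline{Hom}_R(W,\mathscr E))\cong \mathscr S_R(\mathscr P\otimes_RW,\mathscr E),
\end{equation}
and similarly with $W$ replaced by $W'$. Since $\mathscr S_R$ is a Grothendieck category, to check that $\underline{Hom}_R(W,\mathscr E)\longrightarrow \underline{Hom}_R(W',\mathscr E)$ is an epimorphism it suffices to check surjectivity of $\mathscr S_R(\mathscr P,\underline{Hom}_R(W,\mathscr E))\longrightarrow \mathscr S_R(\mathscr P,\underline{Hom}_R(W',\mathscr E))$ for $\mathscr P$ ranging over a generating set (one has to be slightly careful: in general a morphism that is surjective on $\mathscr S_R(\mathscr P,\_\_)$ for all generators $\mathscr P$ need not be epi, but it is if we instead test with all objects $\mathscr P\in\mathscr S_R$, which is equivalent to the map being an epimorphism; alternatively one uses that the cokernel $\mathscr C$ satisfies $\mathscr S_R(\mathscr P,\mathscr C)=0$ for all $\mathscr P$ in a generating family together with left exactness of $\mathscr S_R(\mathscr P,\_\_)$ — I will phrase it via testing against all objects). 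Under the adjunction isomorphism, this map is identified with the restriction map $\mathscr S_R(\mathscr P\otimes_RW,\mathscr E)\longrightarrow \mathscr S_R(\mathscr P\otimes_RW',\mathscr E)$ along $\mathscr P\otimes_RW'\longrightarrow \mathscr P\otimes_RW$. Now the point is that $\mathscr P\otimes_RW'\longrightarrow \mathscr P\otimes_RW$ is a monomorphism in $\mathscr S_R$: indeed $0\longrightarrow W'\longrightarrow W\longrightarrow W''\longrightarrow 0$ is exact, and $\_\_\otimes_RW''$ need not be exact in general, but the relevant fact is that $\mathscr P\otimes_R\_\_$ applied to this sequence gives a complex whose failure of left exactness is measured by $\mathrm{Tor}^R_1(\mathscr P,W'')$; this is not obviously zero. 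I would instead argue directly from the definitions used in the paper: presenting $W'$, $W$, $W''$ by free modules and using the description of $\underline{Hom}_R(\_\_,\mathscr E)$ as a kernel of a map between products of copies of $\mathscr E$ (as in Lemma \ref{L5115mcp} and \cite[B4.2]{AZ}), so that the statement becomes a diagram-chase with products of $\mathscr E$ and the injectivity of $\mathscr E$ supplies the needed lifts.

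Concretely, I expect the cleanest route to be: choose a short exact sequence $0\to W'\to W\to W''\to 0$; pick free presentations compatibly, i.e., a commutative diagram with exact rows and columns built from free $R$-modules $R^{(X')},R^{(X)},R^{(X'')}$ and their syzygies, using the horseshoe lemma; apply $\underline{Hom}_R(\_\_,\mathscr E)$, which by \cite[B4.2]{AZ} turns each free module $R^{(X)}$ into the product $\mathscr E^{X}$ and turns the presentation maps into the corresponding product maps; then the resulting double complex in $\mathscr S_R$ has exact rows precisely because $\mathscr E$ is injective (each free-module term contributes a split-exact row of products of $\mathscr E$, or rather, injectivity of $\mathscr E$ in the Grothendieck category $\mathscr S_R$ ensures $\underline{Hom}_R(\_\_,\mathscr E)$ of a free resolution is a resolution). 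Taking homology and using the snake lemma / a spectral sequence of the double complex yields that $\underline{Hom}_R(\_\_,\mathscr E)$ carries the original short exact sequence to a short exact sequence, which is exactness of the functor. The main obstacle is organizing this diagram chase cleanly in the abstract category $\mathscr S_R$ rather than in a module category — in particular justifying that injectivity of $\mathscr E$ in $\mathscr S_R$ lets us lift along the maps $\mathscr E^{X}\to\mathscr E^{X'}$ coming from an injection $R^{(X')}\hookrightarrow R^{(X)}$ of free modules, which ultimately reduces (via Baer's criterion as used repeatedly in the paper, e.g.\ in the proof of Proposition \ref{P2.12qa}) to the defining lifting property of $\mathscr E$ against inclusions of subobjects of the generators. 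Once that reduction is in place, the rest is formal.
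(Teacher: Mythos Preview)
You correctly isolate the difficulty: after the adjunction, one needs $\mathscr P\otimes_R W'\to\mathscr P\otimes_R W$ to be a monomorphism for some testing class of objects $\mathscr P$, and for arbitrary $\mathscr P\in\mathscr S_R$ there is a genuine $\mathrm{Tor}$--obstruction. But your proposed workaround via the horseshoe lemma is circular. The split rows are fine, but to run the double-complex/snake argument you must know that each column $\underline{Hom}_R(F_\bullet,\mathscr E)$ is exact in positive degrees. Exactness at level $n$ amounts to surjectivity of $\underline{Hom}_R(F_{n-1},\mathscr E)\to\underline{Hom}_R(Z_n,\mathscr E)$ for the syzygy $Z_n\hookrightarrow F_{n-1}$, which is again an instance of the very statement you are trying to prove (for an inclusion into a free module). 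Baer's criterion does not help here: it is a test for injectivity, not a tool that converts injectivity in $\mathscr S_R$ into an extension property along injections in $R\text{-}Mod$. No balancing theorem for $\underline{Ext}$ in this abstract setting has been established that would let you swap to projective resolutions of the first variable.

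The missing idea, which is what the paper exploits, is that Section 5 assumes $\mathscr S$ has enough projectives. Choose a projective $\mathscr Q\in\mathscr S$ mapping onto the cokernel $\mathscr M$; then $\mathscr P:=\mathscr Q\otimes_k R$ is projective in $\mathscr S_R$ and, crucially, satisfies $(\mathscr Q\otimes_k R)\otimes_R W=\mathscr Q\otimes_k W$. Since $k$ is a field, $\mathscr Q\otimes_k\_\_$ is exact, so $W'\hookrightarrow W$ yields a monomorphism $\mathscr P\otimes_R W'\hookrightarrow \mathscr P\otimes_R W$ and the $\mathrm{Tor}$--obstruction disappears for this specific $\mathscr P$. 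Injectivity of $\mathscr E$ then gives surjectivity of $\mathscr S_R(\mathscr P\otimes_R W,\mathscr E)\to\mathscr S_R(\mathscr P\otimes_R W',\mathscr E)$, hence $\mathscr S_R(\mathscr P,\mathscr M)=0$, and since $\mathscr P\twoheadrightarrow\mathscr M$, one gets $\mathscr M=0$. In short: test not against all $\mathscr P$, nor against all generators, but against an $R$-flat projective cover of the cokernel coming from $\mathscr S_k$.
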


\begin{proof}
We already know that $\underline{Hom}_R(\_\_,\mathscr E)$ is left exact. Let $V'\hookrightarrow V$ be an inclusion of $R$-modules. We will show that $\underline{Hom}_R(V,\mathscr E)
\longrightarrow \underline{Hom}_R(V',\mathscr E)$ is an epimorphism. Accordingly, we set $\mathscr M\in \mathscr S_R$ to be the cokernel $\underline{Hom}_R(V,\mathscr E)
\longrightarrow \underline{Hom}_R(V',\mathscr E)\longrightarrow \mathscr M\longrightarrow 0$. 

\smallskip In this section, we have assumed that $\mathscr S$ has enough projectives. We choose an epimorphism $\mathscr P\twoheadrightarrow\mathscr M$ in $\mathscr S$ with $\mathscr P\in \mathscr S$ projective. This induces an epimorphism $\mathscr P\otimes R\twoheadrightarrow\mathscr M\otimes R$ in $\mathscr S_R$. Also, we know that composing the structure map $\mathscr M\otimes R\longrightarrow \mathscr M$ of $\mathscr M\in \mathscr S_R$ with the morphism  $\mathscr M=\mathscr M\otimes k
\longrightarrow \mathscr M\otimes R$ induced by the unit gives the identity in $\mathscr S$.  Accordingly,
$\mathscr M\otimes R\longrightarrow \mathscr M$  is an epimorphism in $\mathscr S_R$, since the underlying morphism in $\mathscr S$ is an epimorphism. We now have an epimorphism
$\mathscr P\otimes R\twoheadrightarrow\mathscr M\otimes R\twoheadrightarrow \mathscr M$ in $\mathscr S_R$. 

\smallskip Since $\mathscr P\in \mathscr S$ is projective, the adjoint isomorphism $\mathscr S_R(\mathscr P\otimes R,\_\_)\cong \mathscr S_k(\mathscr P,\_\_)$ shows that $\mathscr P\otimes R\in \mathscr S_R$ is projective. Hence, the following sequence is exact
\begin{equation}\label{523cap}
\mathscr S_R(\mathscr P\otimes R,\underline{Hom}_R(V,\mathscr E))
\longrightarrow \mathscr S_R(\mathscr P\otimes R,\underline{Hom}_R(V',\mathscr E))\longrightarrow \mathscr S_R(\mathscr P\otimes R, \mathscr M)\longrightarrow 0
\end{equation} Since $k$ is a field, the inclusion $V'\hookrightarrow V$ induces a monomorphism  $(\mathscr P\otimes R)\otimes_RV'=\mathscr P\otimes V'\longrightarrow 
\mathscr P\otimes V=(\mathscr P\otimes R)\otimes_RV$. Since $\mathscr E\in \mathscr S_R$ is injective, it now follows that $\mathscr S_R((\mathscr P\otimes R)\otimes_RV,
\mathscr E)\longrightarrow \mathscr S_R((\mathscr P\otimes R)\otimes_RV',
\mathscr E) $ is an epimorphism. Using \eqref{523cap}, we now have $ \mathscr S_R(\mathscr P\otimes R, \mathscr M)=0$, which shows that $\mathscr M=0$. 

\end{proof}

Using Lemma \ref{520mely}, we see that  if $\mathscr E\in \mathscr S_R$ is an injective object, then 
$0\longrightarrow \underline{Hom}_R(V/K^tV,\mathscr E)\longrightarrow \underline{Hom}_R(V,\mathscr E)
\longrightarrow \underline{Hom}_R(K^tV,\mathscr E)\longrightarrow 0$ is exact for any $t\geq 1$. Taking filtered colimits, we have a short exact sequence
\begin{equation}\label{gnte}
0\longrightarrow  \Gamma_{V_K}(\mathscr E)=\underset{t\geq 1}{\varinjlim}\textrm{ }\underline{Hom}_R(V/K^tV,\mathscr E)\longrightarrow \underline{Hom}_R(V,\mathscr E)
\longrightarrow \underset{t\geq 1}{\varinjlim}\textrm{ } \underline{Hom}_R(K^tV,\mathscr E)=\Delta_{V_K}(\mathscr E)\longrightarrow 0
\end{equation} For our setup with $I=I_{\alpha_1}\cap ... \cap I_{\alpha_k}$ and $J=0$, we now set $\Psi_p:=\Delta_{V_{I_p}}(\_\_)$ 
for each $p\in P$ and $\Psi:=\Delta_{V_I}$.

\begin{lem}\label{516lemy}
Let $\mathscr L\in \mathscr S_R$ be an $R$-elementary object and let $\mathfrak p:=Ann(\mathscr L)$. Let $K\subseteq R$ be an ideal.

\smallskip
(a) If $\mathfrak{p}\in \mathbb W(K,0)=\mathbb V(K)$, then we have $\Delta_{V_K}(\mathscr E(\mathscr L))=0$. 

\smallskip
(b)  If $\mathfrak{p}\notin \mathbb W(K,0)=\mathbb V(K)$, we have $\Delta_{V_K}(\mathscr E(\mathscr L))=\underline{Hom}_R(V,\mathscr E(\mathscr L))$. 
\end{lem}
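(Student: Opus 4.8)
The plan is to deduce this from the short exact sequence \eqref{gnte} applied to $\mathscr E = \mathscr E(\mathscr L)$, together with the already-established values of $\Gamma_{V_K}(\mathscr E(\mathscr L))$ from Lemma \ref{513lemy}. Since $\mathscr E(\mathscr L)$ is injective, \eqref{gnte} reads
\begin{equation*}
0\longrightarrow \Gamma_{V_K}(\mathscr E(\mathscr L))\longrightarrow \underline{Hom}_R(V,\mathscr E(\mathscr L))\longrightarrow \Delta_{V_K}(\mathscr E(\mathscr L))\longrightarrow 0.
\end{equation*}
For part (b), when $\mathfrak p\notin\mathbb V(K)$, Lemma \ref{513lemy}(a) gives $\Gamma_{V_K}(\mathscr E(\mathscr L))=0$, so the second map above is an isomorphism, whence $\Delta_{V_K}(\mathscr E(\mathscr L))=\underline{Hom}_R(V,\mathscr E(\mathscr L))$. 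This is immediate.

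For part (a), when $\mathfrak p\in\mathbb V(K)$, Lemma \ref{513lemy}(b) gives $\Gamma_{V_K}(\mathscr E(\mathscr L))=\underline{Hom}_R(V,\mathscr E(\mathscr L))$, and moreover the inclusion $\Gamma_{V_K}(\mathscr E(\mathscr L))\hookrightarrow\underline{Hom}_R(V,\mathscr E(\mathscr L))$ in \eqref{gnte} is, by construction, the canonical one; so it is an identity, forcing $\Delta_{V_K}(\mathscr E(\mathscr L))=0$. First I would make explicit that the monomorphism in \eqref{gnte} really is the same canonical map $\Gamma_{V_K}(\mathscr E)\hookrightarrow\underline{Hom}_R(V,\mathscr E)$ appearing in Proposition \ref{P5.10pyz} (it is induced by the surjections $V\twoheadrightarrow V/K^tV$ in the colimit defining $\Gamma_{V_K}$, so this is a diagram-chase observation rather than a computation), and then the conclusion $\Delta_{V_K}(\mathscr E(\mathscr L))=\operatorname{coker}=0$ is formal.

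The only mild subtlety — the part worth spelling out rather than the genuinely hard part, since nothing here is truly hard — is confirming that in case (a) the map $\Gamma_{V_K}(\mathscr E(\mathscr L))\to\underline{Hom}_R(V,\mathscr E(\mathscr L))$ is not merely an abstract isomorphism of objects but literally the inclusion in \eqref{gnte}, so that its cokernel vanishes on the nose. This follows because both $\Gamma_{V_K}(\mathscr E(\mathscr L))$ and $\underline{Hom}_R(V,\Gamma_K(\mathscr E(\mathscr L)))$ are computed as subobjects of $\underline{Hom}_R(V,\mathscr E(\mathscr L))$ via the natural maps, and in case (a) we have $\Gamma_K(\mathscr E(\mathscr L))=\mathscr E(\mathscr L)$ by Lemma \ref{L5.5hx}(a), so Proposition \ref{P5.10pyz} identifies $\Gamma_{V_K}(\mathscr E(\mathscr L))=\underline{Hom}_R(V,\mathscr E(\mathscr L))$ compatibly with the inclusion of \eqref{gnte}. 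Once that compatibility is noted, both parts are one-line consequences of exactness of \eqref{gnte}.
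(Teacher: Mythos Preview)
Your proposal is correct and follows essentially the same approach as the paper: both apply the short exact sequence \eqref{gnte} to $\mathscr E(\mathscr L)$ and invoke Lemma \ref{513lemy}(a),(b) to identify $\Gamma_{V_K}(\mathscr E(\mathscr L))$ in the two cases, reading off $\Delta_{V_K}(\mathscr E(\mathscr L))$ as the cokernel. Your extra care in verifying that the monomorphism in \eqref{gnte} is the canonical inclusion (so that the cokernel in case (a) vanishes on the nose) is a point the paper leaves implicit.
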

\begin{proof} (a)
If $\mathfrak{p}\in \mathbb W(K,0)=\mathbb V(K)$, it follows by Lemma \ref{513lemy}(b) that  $\Gamma_{V_K}(\mathscr E(\mathscr L))=\underline{Hom}_R(V,\mathscr E(\mathscr L))$.   Accordingly, the short exact sequence in \eqref{gnte} gives us  $\Delta_{V_K}(\mathscr E(\mathscr L))=0$.  

\smallskip
(b) If $\mathfrak{p}\notin \mathbb W(K,0)=\mathbb V(K)$, it follows by Lemma \ref{513lemy}(a) that  $\Gamma_{V_K}(\mathscr E(\mathscr L))=0$.   Accordingly, the short exact sequence in \eqref{gnte} gives us  $\Delta_{V_K}(\mathscr E(\mathscr L))=\underline{Hom}_R(V,\mathscr E(\mathscr L))$.  
\end{proof}
 
\begin{lem}\label{517lemy}
Let $\mathscr L\in \mathscr S_R$ be an $R$-elementary object and let $\mathfrak p:=Ann(\mathscr L)$. Let $K\subseteq R$ be an ideal. If $\mathfrak p\not\subseteq \mathfrak m$ for some maximal ideal $\mathfrak m$, then $\Delta_{V_K}(\mathscr E(\mathscr L))_{\mathfrak m}=0$. 
\end{lem}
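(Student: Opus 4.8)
The plan is to mimic exactly the argument of Lemma \ref{514lemy}, only replacing $\Gamma_{V_K}$ by $\Delta_{V_K}$ and using the previous lemma (Lemma \ref{516lemy}) in place of Lemma \ref{513lemy}. Concretely, I would split into the same two cases according to whether $\mathfrak{p}\in\mathbb{V}(K)$ or not.

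First, if $\mathfrak{p}\notin\mathbb{W}(K,0)=\mathbb{V}(K)$, Lemma \ref{516lemy}(b) already gives $\Delta_{V_K}(\mathscr{E}(\mathscr{L}))=\underline{Hom}_R(V,\mathscr{E}(\mathscr{L}))$, and if instead $\mathfrak{p}\in\mathbb{V}(K)$, Lemma \ref{516lemy}(a) gives $\Delta_{V_K}(\mathscr{E}(\mathscr{L}))=0$, so in the latter case the claim is trivial. Thus it suffices to treat the case $\mathfrak{p}\notin\mathbb{V}(K)$, where $\Delta_{V_K}(\mathscr{E}(\mathscr{L}))=\underline{Hom}_R(V,\mathscr{E}(\mathscr{L}))$. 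Since $V$ is finitely generated, Lemma \ref{L511pcm} yields
\begin{equation}
\Delta_{V_K}(\mathscr{E}(\mathscr{L}))_{\mathfrak m}=\underline{Hom}_R(V,\mathscr{E}(\mathscr{L}))_{\mathfrak m}\cong\underline{Hom}_{R_{\mathfrak m}}(V_{\mathfrak m},\mathscr{E}(\mathscr{L})_{\mathfrak m}).
\end{equation}
Now the hypothesis $\mathfrak{p}\not\subseteq\mathfrak{m}$ together with Lemma \ref{L5.5hx}(b) gives $\mathscr{E}(\mathscr{L})_{\mathfrak m}=0$, hence $\underline{Hom}_{R_{\mathfrak m}}(V_{\mathfrak m},0)=0$, and therefore $\Delta_{V_K}(\mathscr{E}(\mathscr{L}))_{\mathfrak m}=0$.

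There is essentially no obstacle here; the only point requiring a little care is that Lemma \ref{L511pcm} is stated for $\underline{Hom}_R(V,\_\_)$ and we apply it to $\mathscr{M}=\mathscr{E}(\mathscr{L})$, which is legitimate since $V$ is finitely generated. One could also argue more directly, without invoking Lemma \ref{L511pcm}, by noting that localization at $\mathfrak{m}$ is exact and commutes with the finite presentation of $V$ used to define $\underline{Hom}_R(V,\_\_)$ in \cite[B4.2]{AZ}, so that $\underline{Hom}_R(V,\mathscr{E}(\mathscr{L}))_{\mathfrak m}$ is computed as a kernel of a map between localizations of powers of $\mathscr{E}(\mathscr{L})$, all of which vanish once $\mathscr{E}(\mathscr{L})_{\mathfrak m}=0$. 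Either route closes the argument.
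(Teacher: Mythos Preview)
Your proof is correct and follows essentially the same approach as the paper's own proof: both split into the two cases $\mathfrak p\in\mathbb V(K)$ and $\mathfrak p\notin\mathbb V(K)$, invoke Lemma \ref{516lemy} to identify $\Delta_{V_K}(\mathscr E(\mathscr L))$, and then in the nontrivial case apply Lemma \ref{L511pcm} together with Lemma \ref{L5.5hx}(b) to conclude.
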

\begin{proof} If $\mathfrak{p}\in \mathbb W(K,0)$, we already know from Lemma \ref{516lemy}(a) that $\Delta_{V_K}(\mathscr E(\mathscr L))=0$.  Otherwise, suppose that $\mathfrak{p}\notin \mathbb W(K,0)$, so that $\Delta_{V_K}(\mathscr E(\mathscr L))=\underline{Hom}_R(V,\mathscr E(\mathscr L))$ by Lemma \ref{516lemy}(b). 
Because $V$ is finitely generated, applying Lemma \ref{L511pcm} gives us $\Delta_{V_K}(\mathscr E(\mathscr L))_{\mathfrak m}=\underline{Hom}_{R_{\mathfrak m}}(V_{\mathfrak m},\mathscr E(\mathscr L)_{\mathfrak m})$ for any maximal ideal $\mathfrak m$.  Since $\mathfrak p\not\subseteq \mathfrak m$, we have by Lemma \ref{L5.5hx}(b) that $\mathscr E(\mathscr L)_{\mathfrak m}=0$, which shows that $\Delta_{V_K}(\mathscr E(\mathscr L))_{\mathfrak m}=0$.

\end{proof}

\begin{thm}\label{515mth0}  Let $ V$ be a finitely generated $R$-module.  For any $\mathscr M\in \mathscr S_R$, we have a spectral sequence
\begin{equation}
E_2^{-i,j}=\mathbb L_i\text{colim}_{p\in P}\mathbb R^j\Delta_{V_{I_p}}(\mathscr M)\Longrightarrow \mathbb R^{j-i}\Delta_{V_I}(\mathscr M)
\end{equation}

\end{thm}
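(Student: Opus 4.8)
The plan is to realize the spectral sequence as an instance of Theorem \ref{T5.3cr}, applied to the system of functors fixed just before Lemma \ref{516lemy}: $\Psi_p := \Delta_{V_{I_p}}$ for $p\in P$ and $\Psi := \Psi_{1_{\hat P}} := \Delta_{V_I}$. Everything reduces to verifying that this system meets conditions (1)--(3) imposed before Theorem \ref{T5.3cr}; once that is done, Theorem \ref{T5.3cr} yields the asserted spectral sequence verbatim.

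Condition (1) is already available: $\Delta_{V_K}$ is left exact by its description in \eqref{521gnt} as a filtered colimit of the left exact functors $\underline{Hom}_R(K^tV,\_\_)$, and, $V$ being finitely generated, it preserves direct sums by Lemma \ref{L5.9ut} (as remarked right after \eqref{521gnt}). Condition (2) is exactly the combination of Lemma \ref{516lemy} and Lemma \ref{517lemy}: for an $R$-elementary $\mathscr L$ with $\mathfrak p = Ann(\mathscr L)$ and a maximal ideal $\mathfrak m$, Lemma \ref{516lemy} gives $\Delta_{V_{I_p}}(\mathscr E(\mathscr L)) = 0$ when $\mathfrak p\in\mathbb W(I_p,0) = \mathbb V(I_p)$ and $\Delta_{V_{I_p}}(\mathscr E(\mathscr L)) = \underline{Hom}_R(V,\mathscr E(\mathscr L))$ when $\mathfrak p\notin\mathbb V(I_p)$, while Lemma \ref{517lemy} shows that both localizations at $\mathfrak m$ vanish as soon as $\mathfrak p\not\subseteq\mathfrak m$. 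Hence, since $J=0$ here, one may take $\mathscr X(\mathscr L,\mathfrak m) := 0$ and $\mathscr Y(\mathscr L,\mathfrak m) := \underline{Hom}_R(V,\mathscr E(\mathscr L))_{\mathfrak m}$, and \eqref{eexxyy} holds.

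For condition (3) I would inspect the transition maps $\Psi_p\to\Psi_q$ for $p\leq q$, i.e.\ for $I_p\supseteq I_q$, which are induced via \eqref{521gnt} by restriction along the inclusions $I_q^tV\hookrightarrow I_p^tV$. Since $I_p\supseteq I_q$ forces $\mathbb V(I_p)\subseteq\mathbb V(I_q)$, on an injective hull $\mathscr E(\mathscr L)$ only three situations arise: both values are $0$ (when $\mathfrak p\in\mathbb V(I_p)$), both values are $\underline{Hom}_R(V,\mathscr E(\mathscr L))$ (when $\mathfrak p\notin\mathbb V(I_q)$), or the source is $\underline{Hom}_R(V,\mathscr E(\mathscr L))$ and the target is $0$. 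In the first case the map is the identity of $0$, in the third it is $0$, and in the second the two identifications with $\underline{Hom}_R(V,\mathscr E(\mathscr L))$ supplied by \eqref{gnte} are compatible with the restriction map because $I_q^tV\hookrightarrow I_p^tV\hookrightarrow V$ composes to $I_q^tV\hookrightarrow V$, so the map is the identity there. Localizing at $\mathfrak m$ preserves each of these, so \eqref{ey5.4c} holds.

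I do not expect a genuine obstacle at this point. The substantive work has already been carried out elsewhere: the bicomplex argument behind Theorem \ref{T5.3cr}, which rests on the exactness of $C_\bullet(\Psi_{[\ast]}(\mathscr E))$ for injective $\mathscr E$ (Lemma \ref{L5.2dm}) and ultimately on the decomposition of injectives from Theorem \ref{T3.8bi}, together with the computation of $\Delta_{V_K}$ on injective hulls of $R$-elementary objects via the short exact sequence \eqref{gnte}. If anything requires care it is the bookkeeping in condition (3) --- confirming that the transition maps are identities and zeros in exactly the pattern demanded by \eqref{ey5.4c} --- but this is routine given the explicit descriptions above.
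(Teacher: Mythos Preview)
Your proposal is correct and follows exactly the paper's approach: the paper's proof is the single line ``This follows directly from Lemma \ref{516lemy}, Lemma \ref{517lemy} and Theorem \ref{T5.3cr},'' and you have spelled out precisely how those lemmas feed into the hypotheses (1)--(3) of the setup for Theorem \ref{T5.3cr}. Your extra care in verifying condition (3) via the compatibility of the restriction maps in \eqref{gnte} is not made explicit in the paper but is the correct check.
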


\begin{proof}
This follows directly from Lemma \ref{516lemy}, Lemma \ref{517lemy} and Theorem \ref{T5.3cr}. 
\end{proof}

\medskip

\medskip

\medskip

\small

\begin{bibdiv}
	\begin{biblist}
	
	\bib{AR}{book}{
   author={Ad\'{a}mek, J.},
   author={Rosick\'{y}, J.},
   title={Locally presentable and accessible categories},
   series={London Mathematical Society Lecture Note Series},
   volume={189},
   publisher={Cambridge University Press, Cambridge},
   date={1994},
   pages={xiv+316},
}

\bib{Alz2}{article}{
   author={\`Alvarez Montaner, J.},
   author={Garc\'{\i}a L\'{o}pez, R.},
   author={Zarzuela Armengou, S.},
   title={Local cohomology, arrangements of subspaces and monomial ideals},
   journal={Adv. Math.},
   volume={174},
   date={2003},
   number={1},
   pages={35--56},
}

	\bib{Alz}{article}{
   author={\`Alvarez Montaner, J.},
   author={Boix, A. F.},
   author={Zarzuela, S.},
   title={On some local cohomology spectral sequences},
   journal={Int. Math. Res. Not. IMRN},
   date={2020},
   number={19},
   pages={6197--6293},
}

\bib{AZ0}{article}{
   author={Artin, M.},
   author={Zhang, J. J.},
   title={Noncommutative projective schemes},
   journal={Adv. Math.},
   volume={109},
   date={1994},
   number={2},
   pages={228--287},
}

\bib{ASZ}{article}{
   author={Artin, M.},
   author={Small, L. W.},
   author={Zhang, J. J.},
   title={Generic flatness for strongly Noetherian algebras},
   journal={J. Algebra},
   volume={221},
   date={1999},
   number={2},
   pages={579--610},

}
	
	\bib{AZ}{article}{
   author={Artin, M.},
   author={Zhang, J. J.},
   title={Abstract Hilbert schemes},
   journal={Algebr. Represent. Theory},
   volume={4},
   date={2001},
   number={4},
   pages={305--394},
}

	\bib{Banj}{article}{
   author={Banerjee, A.},
   title={An extension of the Beauville-Laszlo descent theorem},
   journal={Arch Math (Basel)},
   date={to appear},
pages={DOI: 10.1007/s00013-023-01854-1},
}
	
	\bib{BeRe}{article}{
   author={Beligiannis, A.},
   author={Reiten, I.},
   title={Homological and homotopical aspects of torsion theories},
   journal={Mem. Amer. Math. Soc.},
   volume={188},
   date={2007},
   number={883},
   pages={viii+207},

}

\bib{Brd79}{article}{
   author={Brodmann, M.},
   title={Asymptotic stability of ${\rm Ass}(M/I\sp{n}M)$},
   journal={Proc. Amer. Math. Soc.},
   volume={74},
   date={1979},
   number={1},
   pages={16--18},
}

\bib{BHell}{article}{
   author={Brodmann, M.},
   author={Hellus, M.},
   title={Cohomological patterns of coherent sheaves over projective
   schemes},
   journal={J. Pure Appl. Algebra},
   volume={172},
   date={2002},
   number={2-3},
   pages={165--182},
}

	\bib{BLash}{article}{
   author={Brodmann, M. P.},
   author={Faghani, A. Lashgari},
   title={A finiteness result for associated primes of local cohomology
   modules},
   journal={Proc. Amer. Math. Soc.},
   volume={128},
   date={2000},
   number={10},
   pages={2851--2853},
}

\bib{BRS}{article}{
   author={Brodmann, M.},
   author={Rotthaus, Ch.},
   author={Sharp, R. Y.},
   title={On annihilators and associated primes of local cohomology modules},
   journal={J. Pure Appl. Algebra},
   volume={153},
   date={2000},
   number={3},
   pages={197--227},
}
	
\bib{BS}{book}{
   author={Brodmann, M. P.},
   author={Sharp, R. Y.},
   title={Local cohomology},
   series={Cambridge Studies in Advanced Mathematics},
   volume={136},
   edition={2},
   publisher={Cambridge University Press, Cambridge},
   date={2013},
   pages={xxii+491},
}

\bib{Dasc}{book}{
   author={D\u{a}sc\u{a}lescu, S.},
   author={N\u{a}st\u{a}sescu, C.},
   author={Raianu, \c{S}.},
   title={Hopf algebras},
   series={Monographs and Textbooks in Pure and Applied Mathematics},
   volume={235},
   note={An introduction},
   publisher={Marcel Dekker, Inc., New York},
   date={2001},
}

\bib{DAS}{article}{
   author={Divaani-Aazar, K.},
   author={Sazeedeh, R.},
   title={Cofiniteness of generalized local cohomology modules},
   journal={Colloq. Math.},
   volume={99},
   date={2004},
   number={2},
   pages={283--290},
}

\bib{Falt}{article}{
   author={Faltings, G.},
   title={Der Endlichkeitssatz in der lokalen Kohomologie},
   journal={Math. Ann.},
   volume={255},
   date={1981},
   number={1},
   pages={45--56},
}

	\bib{Hell}{article}{
   author={Hellus, M.},
   title={On the set of associated primes of a local cohomology module},
   journal={J. Algebra},
   volume={237},
   date={2001},
   number={1},
   pages={406--419},
}

\bib{Hun}{article}{
   author={Huneke, C},
   author={Katz, D.},
   author={Marley, T.},
   title={On the support of local cohomology},
   journal={J. Algebra},
   volume={322},
   date={2009},
   number={9},
   pages={3194--3211},
}
	
\bib{LV}{article}{
   author={Lowen, W.},
   author={Van den Bergh, M.},
   title={Deformation theory of abelian categories},
   journal={Trans. Amer. Math. Soc.},
   volume={358},
   date={2006},
   number={12},
   pages={5441--5483},
}

\bib{Neem}{book}{
   author={Neeman, A.},
   title={Triangulated categories},
   series={Annals of Mathematics Studies},
   volume={148},
   publisher={Princeton University Press, Princeton, NJ},
   date={2001},
   pages={viii+449},
}

\bib{Pop}{book}{
   author={Popescu, N.},
   title={Abelian categories with applications to rings and modules},
   series={London Mathematical Society Monographs, No. 3},
   publisher={Academic Press, London-New York},
   date={1973},
   pages={xii+467},
}

\bib{RS}{article}{
   author={Rotthaus, C.},
   author={\c{S}ega, L. M.},
   title={Some properties of graded local cohomology modules},
   journal={J. Algebra},
   volume={283},
   date={2005},
   number={1},
   pages={232--247},
}

\bib{Sten}{book}{
   author={Stenstr\"{o}m, Bo},
   title={Rings of quotients},
   series={Die Grundlehren der mathematischen Wissenschaften, Band 217},
   note={An introduction to methods of ring theory},
   publisher={Springer-Verlag, New York-Heidelberg},
   date={1975},
   pages={viii+309},
}

\bib{TYY}{article}{
   author={Takahashi, R.},
   author={Yoshino, Y.},
author={Yoshizawa, T.},
   title={Local cohomology based on a nonclosed support defined by a pair of ideals},
   journal={J. Pure Appl. Algebra},
   volume={213},
   date={2009},
   number={4},
   pages={582--600},
}

	\bib{Web}{book}{
   author={Weibel, C.~A.},
   title={An introduction to homological algebra},
   series={Cambridge Studies in Advanced Mathematics},
   volume={38},
   publisher={Cambridge University Press, Cambridge},
   date={1994},
   pages={xiv+450},
}

	\end{biblist}
	
	\end{bibdiv}

\end{document}